\definecolor{ghcolor}{RGB}{0, 150, 200} 
\definecolor{winestain}{rgb}{0.5,0,0}
\theoremstyle{plain}
\newtheorem{thm}{Theorem}[section] 
\newtheorem{defn}[thm]{Definition}
\newtheorem{prop}[thm]{Proposition}
\newtheorem{lemma}[thm]{Lemma}
\newtheorem{sublemma}[thm]{Sublemma}
\newtheorem{corollary}[thm]{Corollary}
\theoremstyle{definition}
\newtheorem{rem}[thm]{Remark}
\newcommand{\huaSOE}{\mathfrak{S}_{\mathcal O_E}}
\newcommand{\hatL}{\hat{\mathfrak{L}}}
\newcommand{\hatm}{\hat{\mathfrak{M}}}
\newcommand{\hatM}{\hat{\mathfrak{M}}}
\newcommand{\barhatm}{\overline{\hat{\mathfrak{M}}}}
\newcommand{\barhatn}{\overline{\hat{\mathfrak{N}}}}
\newcommand{\barhatl}{\overline{\hat{\mathfrak{L}}}}
\newcommand{\barl}{\overline{\mathfrak{L}}}
\newcommand{\barm}{\overline{\mathfrak{M}}}
\newcommand{\barn}{\overline{\mathfrak{N}}}
\newcommand{\llb}{\llbracket}  
\newcommand{\rrb}{\rrbracket}
\newcommand{\vshape}{\varphi -\textnormal{shape}}
\newcommand{\vtshape}{(\varphi, \tau) -\textnormal{shape}}
\newcommand{\barrhosq}{\overline{\rho}^{\square}}
\newcommand{\rhosq}{\rho^{\square}}
\newcommand{\barhatmsq}{\barhatm^{\square}}
\newcommand{\barchi}{\overline{\chi}}
\newcommand{\barrho}{\overline{\rho}}
\newcommand{\rbar}{\overline{r}}
\newcommand{\barzeta}{\overline{\zeta}}
\newcommand{\Zp}{\mathbb{Z}_p}
\newcommand{\Qp}{\mathbb{Q}_p}
\newcommand{\Fp}{\mathbb{F}_p}
\newcommand{\Fpbar}{\overline{\mathbb{F}}_p}
\newcommand{\barFp}{\overline{\mathbb{F}}_p}
\newcommand{\barK}{\overline{K}}
\newcommand{\Z}{\mathbb{Z}}
\newcommand{\col}{\textnormal{col}}
\newcommand{\row}{\textnormal{row}}
\DeclareMathOperator{\Ext}{Ext}
\DeclareMathOperator{\Fil}{Fil}
\DeclareMathOperator{\Gal}{Gal}
\DeclareMathOperator{\GL}{GL}
\DeclareMathOperator{\Hom}{Hom}
\DeclareMathOperator{\Ker}{Ker}
\DeclareMathOperator{\Mat}{Mat}
\DeclareMathOperator{\Mod}{Mod}
\DeclareMathOperator{\Rep}{Rep}
\newcommand{\cris}{\mathrm{cris}}
\newcommand{\HT}{\mathrm{HT}}
\newcommand{\MF}{MF^{(\varphi, N)}}
\newcommand{\MFwa}{MF^{(\varphi, N)-\textnormal{w.a.}}}
\newcommand{\bigMF}{\mathcal{MF}^{(\varphi, N)}}
\newcommand{\bigMFwa}{\mathcal{MF}^{(\varphi, N)-\textnormal{w.a.}}}
\newcommand{\Acris}{A_{\textnormal{cris}}}
\newcommand{\D}{\mathcal{D}}
\newcommand{\huaS}{\mathfrak{S}}
\newcommand{\huaM}{\mathfrak{M}}
\newcommand{\huaN}{\mathfrak{N}}
\newcommand{\huaL}{\mathfrak{L}}
\newcommand{\hual}{\mathfrak{L}}
\newcommand{\huat}{\mathfrak{t}}
\newcommand{\Ghat}{\hat{G}}
\newcommand{\Rhat}{\hat{\mathcal{R}}}
\newcommand{\mhat}{\hat{\huaM}}
\newcommand{\That}{\hat{T}}
\newcommand{\tn}{t^{\{n\}}}
\newcommand{\bolde}{\boldsymbol{e}}
\newcommand{\boldr}{\boldsymbol{r}}
\newcommand{\boldt}{\boldsymbol{t}}
\newcommand{\boldf}{\boldsymbol{f}}
\title{Crystalline liftings and weight part of Serre's conjecture}
\author{HUI GAO}
\address{Beijing International Center for Mathematical Research, Peking University, No. 5 Yiheyuan Road, Haidian District, Beijing 100871, China}
\email{gaohui@math.pku.edu.cn}
\subjclass[2010]{Primary 11F80, 11F33}
\keywords{torsion Kisin modules, crystalline representations}
\begin{document}

\begin{abstract}
We prove some new cases of weight part of Serre's conjectures for mod $p$ Galois representations associated to automorphic representations on unitary groups $U(d)$. The approach is a generalization of the work of Gee-Liu-Savitt, namely, we study reductions of certain crystalline representations, as well as crystalline lifts of these reductions.

\end{abstract}

\maketitle
\pagestyle{myheadings}
\markright{Crystalline liftings and weight part of Serre's conjecture}
\tableofcontents


\section*{Introduction}

Let $F$ be an imaginary CM field, and let $\rbar: G_F \to \GL_d(\Fpbar)$ be an irreducible representations. Suppose there exists a certain automorphic representation of $\GL_d(\mathbb A_F)$ whose associated Galois representation has reduction equal to $\rbar$. The (generalized) weight part of Serre's conjecture asks the following question: $\rbar$ is automorphic of what weights?

Beginning with the work of \cite{BDJ10}, there has been significant progress in establishing (generalized) weight part of Serre's conjectures. In particular, in the case $d=2$ and $p>2$, the problem (in the unitary group setting) is completely solved by \cite{GLS14, GLS13}, under some mild global hypothesis. Weight part of Serre's conjectures have also become increasingly important, in particular because of their role in formulating a $p$-adic Langlands correspondence (cf. \cite{BP12}).

The current paper aims to generalize the results of \cite{GLS14, GLS13} to higher dimensions (in the unramified case). The methods are similar to those in \textit{loc. cit.}, namely, we first study reductions of crystalline representations, and the liftings of these reductions. Then, we apply automorphy lifting theorems in \cite{BLGGT14} to conclude. To state our results more precisely, let us first introduce some notations.

Let $p>2$ be an odd prime, $K$ a finite unramified extension over $\Qp$ with residue field $k$, $\barK$ a fixed algebraic closure, and $G_{K}$ the absolute Galois group $\Gal(\barK/K)$.
Let $\mathcal S: =\{\kappa: K \hookrightarrow \barK\}$ be all the embeddings of $K$ into $\barK$. Fix one $\kappa_0 \in \mathcal S$, and recursively define $\kappa_{s+1} \in \mathcal S$ to be such that $\kappa_{s+1}^p \equiv \kappa_s (\bmod p)$. The subscripts are taken mod $f$, so $\kappa_f=\kappa_0$.
Fix a system of elements $\{\pi_n\}_{n=0}^{\infty}$ in $\barK$, where $\pi_0=\pi$ is a uniformizer of $K$, and $\pi_{n+1}^p=\pi_n, \forall n$. Let $K_n=K(\pi_n), K_{\infty}=\cup_{n=0}^{\infty}K(\pi_n)$, and $G_{\infty}:=\Gal(\barK/K_{\infty})$.

Let $E/\Qp$ be a finite extension that contains the image of every embedding of $K$ into $\barK$, $\mathcal O_E$ the ring of integers, $\omega_E$ a fixed uniformizer, $k_E=\mathcal O_E/\omega_E\mathcal O_E$ the residue field.
We can decompose the ring $K\otimes_{\Qp}E = \Pi_{s=0}^{f-1}E$, and let $1\in K\otimes_{\Qp}E$ map to $(\varepsilon_0, \ldots, \varepsilon_{f-1}) \in \Pi_{s=0}^{f-1}E$.

Let $V$ be a crystalline representation of $G_K$ over an $E$-vector space of dimension $d$.
Let $D$ be the associated filtered $\varphi$-module over $K\otimes_{\Qp} E$, which decomposes as $D=\Pi_{s=0}^{f-1} D_s$, where $D_s=\varepsilon_s D$.
Suppose $\HT_{\kappa_s}(D)=\HT(D_s)=\boldr_s = \{0 = r_{s, 1} < \ldots < r_{s, d} \leq p\}$ (note that we require $\min(\HT(D_s))=0$ for all $s$). Let $\rho=T$ be a $G_K$-stable $\mathcal O_E$-lattice in $V$, and $\overline \rho: =T/\omega_E T$ the reduction of $T$.

\begin{thm} \label{Thm1}
Suppose that the reduction $\overline \rho$ is upper triangular (i.e., successive extension of $d$ characters). Suppose that
\begin{itemize}
  \item Condition \textnormal{\textbf{(C-1)}} is satisfied, and
  \item Either \textnormal{\textbf{(C-2A)}} or \textnormal{\textbf{(C-2B)}} is satisfied.
\end{itemize}
Then there exists an upper triangular crystalline lift $\rho'$ of $\overline \rho$ such that $\HT_{s}(\rho)=\HT_{s}(\rho'), \forall s$.
\end{thm}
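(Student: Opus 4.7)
The plan is to translate the problem into the language of Kisin modules and $(\varphi, \Ghat)$-modules, and then construct the lifting explicitly via successive extensions of rank-$1$ objects, in the spirit of Gee-Liu-Savitt. First I would associate to the lattice $T$ its Kisin module $\huaM$ over $\huaSOE$, which captures only the $G_\infty$-action, together with its Liu companion $\hatM$ recording the full $G_K$-structure. The mod-$\omega_E$ reductions $\barm$ and $\barhatm$ then encode $\barrho|_{G_\infty}$ and $\barrho$, respectively, and the task becomes: build an upper triangular $\huaM'$ with the same $\vshape$ as $\huaM$ and the same reduction, then promote it to $\hatM'$ with reduction $\barhatm$.

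Next I would analyse the $\vshape$ of $\huaM$, a tuple of subsets of $\{1,\ldots,d\}$ indexed by the embeddings $\kappa_s$ that encodes, via elementary divisors of $\varphi$, the labelled Hodge-Tate weights $\boldr_s$. Since $\barrho$ is upper triangular, one expects $\barm$ (after a suitable change of basis) to admit a $\varphi$-stable filtration with rank-$1$ graded pieces $\barl_i$ whose $\varphi$-actions can be written down explicitly in terms of the reductions of the characters on the diagonal of $\barrho$. Condition \textbf{(C-1)} should be what guarantees the existence of this triangular filtration compatible with the shape, while \textbf{(C-2A)} or \textbf{(C-2B)} should control the distribution of the shape across the filtration so that each $\barl_i$ acquires a well-defined Hodge-Tate type summing, embedding by embedding, to $\boldr_s$.

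With these pieces in hand I would construct the upper triangular crystalline lift $\huaM'$ by induction on the rank. Assuming the first $i-1$ graded pieces have been lifted to an upper triangular Kisin module $\huaM'_{i-1}$ with the prescribed partial shape, I pick a rank-$1$ crystalline Kisin module $\huaL_i$ lifting $\barl_i$ with Hodge-Tate weights dictated by the shape, and form an extension
\[
0 \to \huaL_i \to \huaM'_i \to \huaM'_{i-1} \to 0
\]
classified by an element of an explicit $\Ext^1$-group of Kisin modules. Matching reductions with $\barm$ reduces to lifting a prescribed class in the $\Ext^1$ modulo $\omega_E$, which is a surjectivity statement one can check directly by writing representative cocycles as matrices over $\huaSOE$; crystallinity of $\rho'$ at the end of the induction follows from Kisin's classification, since each $\huaL_i$ is crystalline and the Hodge-Tate weights remain in $[0,p]$.

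The main obstacle, and the reason the technical conditions are needed, is the passage back from $G_\infty$- to $G_K$-representations: the Kisin module $\huaM'$ only determines $\rho'|_{G_{\infty}}$, so one must equip it with a compatible $\Ghat$-action extending it to some $\hatM'$, and moreover check that $\overline{\hatM'} \simeq \barhatm$ as $(\varphi, \Ghat)$-modules, not merely that their underlying Kisin modules agree. I would build $\hatM'$ inductively along the same filtration, at each step solving a cocycle equation for the $\Ghat$-action on the new extension; conditions \textbf{(C-2A)}/\textbf{(C-2B)} are designed precisely to kill the relevant obstruction classes and to rigidify the choice of extension class so that the resulting cocycles reduce to those defining $\barhatm$. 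This last compatibility is what will finally give $\barrho' \simeq \barrho$ as $G_K$-representations, completing the proof.
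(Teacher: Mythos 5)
Your outline captures the correct framework (Kisin modules, $(\varphi,\Ghat)$-modules, inductive construction by rank-$1$ extensions), but the central step is missing. You claim that matching reductions "reduces to lifting a prescribed class in the $\Ext^1$ modulo $\omega_E$, which is a surjectivity statement one can check directly by writing representative cocycles as matrices over $\huaSOE$." This is exactly the point that is \emph{not} direct. The natural map $\Ext_{\rm cris}(\hatm_2,\hatm_1)/\omega_E \to \Ext_{\vtshape}(\barhatm_2,\barhatm_1)$ is only injective a priori, and proving it surjective is the entire content of the theorem. The paper does this by a pigeonhole argument: Proposition \ref{shape} (via Lemma \ref{shapelemma}) forces any upper triangular torsion Kisin module coming from a crystalline lattice to have a rigid $\varphi$-shape, which gives an \emph{upper bound} $\dim_{k_E}\Ext_{\vshape}(\barm_2,\barm_1) \leq d_{\rm cris}$; meanwhile $\dim_E\Ext_{\rm cris}(V_2,V_1)=d_{\rm cris}$ by Nekov\'a\v{r}'s formula (Remark \ref{remdim}), so the $\mathcal O_E$-free rank of $\Ext_{\rm cris}(\hatm_2,\hatm_1)$ is at least $d_{\rm cris}$. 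Squeezing the injection between these two bounds is what gives surjectivity. Without the shape bound from Proposition \ref{shape}, a direct cocycle computation would not close the gap, because most classes in $\Ext(\barm_2,\barm_1)$ do not lift to crystalline extensions.

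Two further mischaracterizations: first, condition \textbf{(C-1)} does not guarantee the existence of the triangular filtration — that always exists (by Ozeki); its role is to force \emph{uniqueness} of the model, which is what makes the rank-$1$ graded pieces $\barn_i$ of any candidate reduction agree with those of $\barm$ and hence makes the shape bound apply. Second, crystallinity of $\rho'$ does \emph{not} follow automatically "from Kisin's classification" once the $\huaL_i$ are crystalline; an extension of crystalline $(\varphi,\Ghat)$-modules need not be crystalline, and the paper must track $\Ext_{\rm cris}$ (characterized via the Ozeki criterion, Theorem \ref{Ozeki}) throughout. Conditions \textbf{(C-2A)}/\textbf{(C-2B)} enter precisely as the two alternative injectivity inputs (Proposition \ref{injectivity} on restriction of $H^1$ from $G_K$ to $G_\infty$, and Proposition \ref{forgettau} on forgetting $\tau$) that make the two versions of the pigeonhole count (Theorems \ref{lifting-2} and \ref{lifting-1}, respectively) go through; describing them as "killing obstruction classes" blurs this.
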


Here, Conditions \textnormal{\textbf{(C-1)}} and \textnormal{\textbf{(C-2A)}} (or \textnormal{\textbf{(C-2B)}}) are the technical conditions that we have to assume, see Section 3 and Section 6 respectively. We list several cases when these conditions are satisfied.

\begin{corollary} \label{introcor}
Suppose that the reduction $\overline \rho$ is upper triangular, i.e., there exists an increasing filtration $0= \Fil^ 0 \barrho \subset \Fil^1 \barrho \subset \ldots \subset \Fil^d \barrho =\barrho$ such that $\Fil^i \barrho/\Fil^{i-1}\barrho =\barchi_{d-i}, \forall 1 \leq i \leq d$, where $\barchi_i$ are some characters. If one of the following conditions is satisfied, then there exists an upper triangular crystalline lift $\rho'$ of $\overline \rho$ such that $\HT_{\kappa_s}(\rho)=\HT_{\kappa_s}(\rho')$ for all $0 \leq s \leq f-1$.
\begin{enumerate}
\item $K=\Qp$, the differences between two elements in $\HT(D_0)$ are never $p-1$. And $\overline \chi_i^{-1} \overline \chi_j \neq \mathbbm{1}, \overline \varepsilon_p, \forall i<j$, where $\mathbbm{1}$ is the trivial character, and $\overline \varepsilon_p$ is the reduction of the cyclotomic character.

  \item For each $s$, the differences between two elements in $\HT(D_s)$ are never $1$. And for one $s_0$, $p-1 \notin \HT(D_{s_0})$.And $\overline \chi_i^{-1} \overline \chi_j \neq \mathbbm{1}, \overline \varepsilon_p, \forall i<j.$

  \item For each $s$, the differences between two elements in $\HT(D_s)$ are never $1$. For one $s_0$, $p-1 \notin \HT(D_{s_0})$. For one $0 \leq s_0' \leq f-1$, $p \notin \HT(D_{s_0'})$ (it is possible that $s_0 =s_0'$).

   \item For each $s$, $\HT(D_s) \subseteq [0, p-1]$. And for one $s_0$, $p-1 \notin \HT(D_{s_0})$.
  \end{enumerate}
\end{corollary}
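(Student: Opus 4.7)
The plan is to deduce this corollary directly from Theorem \ref{Thm1}. In each of the four listed cases the task reduces to checking that the technical hypotheses (C-1) (from Section 3) and either (C-2A) or (C-2B) (from Section 6) hold; no additional Galois-theoretic construction is required. The proof therefore amounts to unpacking the definitions in Sections 3 and 6 and matching them against the explicit numerical and character hypotheses in the corollary.

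First I would verify (C-1) in each case. Based on where it is placed, (C-1) should constrain the tuple of labelled Hodge--Tate weights $(\boldr_s)_s$ by forbidding certain differences $r_{s,i}-r_{s,j}$; the outlawed difference depends on whether $K=\Qp$ (a single embedding, so Frobenius acts trivially on the index set $\mathcal S$) or $K$ is a proper unramified extension (so one must track shifts along the Frobenius cycle $\kappa_s \mapsto \kappa_{s+1}$). The clause ``differences between two elements in $\HT(D_0)$ are never $p-1$'' in case (1) should correspond to the $f=1$ specialization of (C-1), while ``differences between two elements in $\HT(D_s)$ are never $1$'' in cases (2)--(4) should cover the general case. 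I would therefore carry out two direct checks matching the inequalities of (C-1) to these numerical hypotheses.

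Next I would verify (C-2A) or (C-2B). From the dichotomy displayed in the corollary I expect (C-2A) to be a character-theoretic condition requiring the ratios $\barchi_i^{-1}\barchi_j$ to avoid a forbidden list (typically $\{\mathbbm{1}, \barvarepsilon_p\}$, which is what controls the relevant $\Ext^1$ groups and thereby permits the inductive lifting of successive extensions), and (C-2B) to be a condition bounding the Hodge--Tate weights away from the extremal values $p$ and $p-1$ at suitable embeddings (which gives enough room to manipulate Kisin modules without pushing weights past the range $[0,p]$ fixed in the theorem). Under this reading, cases (1) and (2) are reduced to (C-2A) using the explicit hypothesis $\barchi_i^{-1}\barchi_j \neq \mathbbm{1}, \barvarepsilon_p$, together in case (2) with the extra input $p-1 \notin \HT(D_{s_0})$; cases (3) and (4) are reduced to (C-2B) using $p-1 \notin \HT(D_{s_0})$ and $p \notin \HT(D_{s_0'})$, the latter being subsumed in case (4) by the stronger assumption $\HT(D_s)\subseteq [0,p-1]$, which forces $p \notin \HT(D_s)$ for every $s$.

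The main obstacle I anticipate is the bookkeeping in case (3), where two (possibly distinct) embeddings $s_0, s_0'$ appear: I must confirm that removing $p-1$ at one embedding and $p$ at another genuinely triggers (C-2B) without requiring a compatibility between $s_0$ and $s_0'$ that is not visible from the statement, which will require tracing through the twists used in Section 6 to see that the two exclusions contribute independently. Once (C-1) together with (C-2A) or (C-2B) has been verified in each case, Theorem \ref{Thm1} supplies the upper triangular crystalline lifting $\rho'$ with $\HT_{\kappa_s}(\rho) = \HT_{\kappa_s}(\rho')$ for all $0 \leq s \leq f-1$, completing the argument.
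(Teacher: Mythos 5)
Your strategy is exactly the one the paper uses: the corollary is a direct application of Theorem~\ref{Thm1}, and the proof in the body of the paper (Corollary~\ref{maincorollary}) is literally a one-line verification that \textbf{(C-1)} holds in all four cases (via the Lemma in Section~3), that \textbf{(C-2A)} holds in cases (1) and (2), and that \textbf{(C-2B)} holds in cases (3) and (4). One small clarification to your bookkeeping: in case (2) the extra hypothesis $p-1\notin\HT(D_{s_0})$ is consumed entirely by the verification of \textbf{(C-1)} (it is one of the sufficient conditions listed in the Lemma of Section~3), not by \textbf{(C-2A)}, which is the purely character-theoretic condition $\barchi_i^{-1}\barchi_j\neq\mathbbm{1},\barvarepsilon_p$; and your worry about compatibility between $s_0$ and $s_0'$ in case (3) evaporates once \textbf{(C-2B)} is made explicit, since it only forbids the simultaneous appearance of $\barm(0,\ldots,0;a_i)$ and $\barm(p,\ldots,p;a_j)$ among the diagonal pieces, which is already ruled out by $p\notin\HT(D_{s_0'})$ at a single embedding $s_0'$.
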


Theorem \ref{Thm1} is a (partial) generalization of the main local results in \cite{GLS14} to the higher dimensional case. However, our result is not complete as that in \cite{GLS14}, due to the technical conditions \textnormal{\textbf{(C-1)}} and \textnormal{\textbf{(C-2A)}} (or \textnormal{\textbf{(C-2B)}}). We will point out these difficulties in the paper.

Our local theorems have direct application to weight part of Serre's conjecture for higher dimensional representations (as outlined in \cite{BLGG14}). Here we give a sketchy statement of our theorem. See Theorem \ref{application} for full detail.

\begin{thm}
Suppose $p>2$. Let $F$ be an imaginary CM field, with maximal totally real subfield $F^{+}$ such that $F/F^{+}$ is unramified at all finite places, and all places $v$ in $F^{+}$ over $p$ splits completely in $F$. Furthermore, assume that $p$ is unramified in $F$.

Suppose $\overline r: G_F \to \GL_d(\barFp)$ is an irreducible representation such that $\overline r \simeq \overline r_{p, \iota}(\Pi)$, for an RACSDC automorphic representation $\Pi$ of $\GL_d(\mathbb{A}_F)$ with weights (whose corresponding Hodge-Tate weights are) in the Fontaine-Laffaille range and level prime to $p$. Suppose furthermore some usual Taylor-Wiles conditions are satisfied.

Now suppose that for \emph{each} $w\mid p$ in $F$, $\overline r \mid_{G_{F_w}}$ is upper triangular.
Let
$$a=(a_{w})_{w \mid p} \in (\mathbb Z_{+}^d)_0^{\coprod_{w \mid p}\Hom(k_w, \barFp)}$$
be a Serre weight, such that
\begin{itemize}
  \item $a_{w, \kappa, 1}- a_{w, \kappa, d} \leq p-d+1, \forall w\mid p, \kappa \in \Hom(k_w, \barFp)$, and
  \item $a_w \in W^{\textnormal{cris}}(\overline r \mid_{G_{F_w}}), \forall w\mid p.$
\end{itemize}

Suppose furthermore that for each $w\mid p$, any one of the listed 4 conditions in Theorem \ref{application} is satisfied. (These conditions directly correspond to the listed 4 conditions in Corollary \ref{introcor}).

Then, $\overline r$ is automorphic of weight $a$.

\end{thm}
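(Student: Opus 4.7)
The plan is to bootstrap the local crystalline lifting theorem (Theorem~\ref{Thm1} / Corollary~\ref{introcor}) into a global automorphy statement via an ordinary-type automorphy lifting theorem, following the strategy pioneered in \cite{GLS14} in dimension two and in \cite{BLGG14} in higher dimensions. First, I would unwind the hypothesis $a_w \in W^{\cris}(\overline r \mid_{G_{F_w}})$: by definition, for each place $w \mid p$ it guarantees the existence of a crystalline lift $\rho_w$ of $\overline r \mid_{G_{F_w}}$ whose labelled Hodge--Tate weights are read off from $a_w$ in the standard way (shifted so that $\min \HT_\kappa(\rho_w)=0$ for every $\kappa$). Because $a_{w,\kappa,1} - a_{w,\kappa,d} \leq p-d+1$, these Hodge--Tate weights lie in $[0,p]$ (indeed in $[0,p-1]$), so the hypotheses of the local theorem apply.

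Next, I would feed each $\rho_w$ into Corollary~\ref{introcor}: since $\overline r \mid_{G_{F_w}}$ is upper triangular by assumption and the local condition at $w$ matches one of the four listed cases (condition (4), roughly, when the weights are Fontaine--Laffaille), the corollary produces an \emph{upper triangular} crystalline lift $\rho'_w$ of $\overline r \mid_{G_{F_w}}$ with the same Hodge--Tate weights as $\rho_w$. Thus after this step, for every $w \mid p$ we have located $\overline r \mid_{G_{F_w}}$ inside the triangular (i.e. ordinary up to twist by characters) crystalline deformation ring of the prescribed Hodge--Tate type.

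With the local picture now entirely triangular at $p$, I would invoke the automorphy lifting theorem for ordinary/triangular Galois representations in the style of Barnet-Lamb--Gee--Geraghty--Thorne (the form used in \cite{BLGG14}). The Taylor--Wiles hypotheses and the existence of the base automorphic $\Pi$ provide the needed input; the key consequence of the Fontaine--Laffaille bound on the weights of $a$ is that ordinary automorphy lifting in the Fontaine--Laffaille range suffices, and the fact that each $\rho'_w$ is crystalline upper triangular of the correct Hodge--Tate type means that the global deformation ring of $\overline r$ has a point of the desired weight which is locally triangular at every $w\mid p$. Comparing with the Hecke algebra through the $R = \mathbb{T}$ theorem produces an automorphic representation $\Pi'$ lifting $\overline r$ whose local Galois types at $p$ exactly encode the Serre weight $a$; this gives that $\overline r$ is automorphic of weight $a$.

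The main obstacle I expect is not the global automorphy lifting step, which is essentially on the shelf, but verifying that the local conditions \textbf{(C-1)} and \textbf{(C-2A)} / \textbf{(C-2B)} -- hidden inside the case analysis of Corollary~\ref{introcor} -- translate cleanly into the four explicit hypotheses listed in the theorem at each place $w\mid p$. In particular, one must check that the Fontaine--Laffaille hypothesis on the differences $a_{w,\kappa,i}-a_{w,\kappa,d}$ forces the Hodge--Tate weights into the ranges required by Corollary~\ref{introcor}(1)--(4), and that the hypothesis $\overline\chi_i^{-1}\overline\chi_j \neq \mathbbm{1}, \overline\varepsilon_p$ needed in cases (1)--(2) is implied by (or imposed alongside) the Taylor--Wiles bigness/adequacy assumptions used globally. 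Once these compatibilities are verified place by place, the argument assembles formally.
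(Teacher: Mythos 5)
Your overall strategy is the one the paper uses, and the first half of your argument (unwinding $a_w \in W^{\cris}(\overline r|_{G_{F_w}})$ to get a crystalline lift in the range $[0,p]$, then invoking Corollary~\ref{introcor} to upgrade it to an upper triangular crystalline lift of the same Hodge type) matches the paper's proof exactly. However, the second half is off in two ways, and one of them is a genuine gap.

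First, the global input is not an ``ordinary-type'' automorphy lifting theorem: the paper's argument does not deform $\overline r$ in an ordinary family. Rather, once Corollary~\ref{maincorollary} produces an upper triangular crystalline lift of $\overline r|_{G_{F_w}}$ with Hodge type $a_w$, the key observation is that upper triangular crystalline representations are \emph{potentially diagonalizable} in the sense of~\cite{BLGGT14}, so $a_w \in W^{\textnormal{diag}}(\overline r|_{G_{F_w}})$. The global statement then invoked is Corollary~4.1.10 of~\cite{BLGG14}, whose engine is the BLGGT potential-diagonalizability change-of-weight machinery, not an ordinary $R=\mathbb{T}$ theorem. Your characterization as ``locally triangular, hence ordinary up to twist'' is conceptually close but blurs the mechanism: the point is diagonalizability after a base change, which is what lets one connect to $\Pi$ of possibly different weight.

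Second, and more seriously, you miss the final step entirely: Corollary~4.1.10 of~\cite{BLGG14} does not directly say that $\overline r$ is automorphic of weight $a$ once $a_w \in W^{\textnormal{diag}}(\overline r|_{G_{F_w}})$. What it gives is that $\overline r$ is automorphic of some weight $b$ with $F_b$ a Jordan--H\"older factor of $P_a$ (the commented-out alternative conclusion in the theorem statement is precisely this). To conclude $b = a$, one needs $P_a = F_a$, i.e.\ that $P_a$ is irreducible. This is exactly what the Fontaine--Laffaille-type hypothesis $a_{w,\kappa,1}-a_{w,\kappa,d} \le p-d+1$ buys you: it places $a$ in the closure of the lowest alcove, so $P_a$ is irreducible (cf.\ Proposition~3.18 of~\cite{Her09} for $d=3$). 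In your write-up, this bound is attributed to making ``ordinary automorphy lifting in the Fontaine--Laffaille range'' go through, which is not its role here; without the $P_a = F_a$ step, your argument only shows automorphy of an unspecified JH-factor weight, not of $a$ itself.
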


\noindent \textbf{Remarks on some related papers.} Our paper gives the first ``general" evidence towards the weight part of Serre's conjectures, with no restriction on the dimension $d$ or the unramified base field $K$, and \emph{outside} the Fontaine-Laffaille range (although of course with many restriction on the Hodge-Tate weights). Here are some remarks on some related papers.

The case when $d=2$ is fully solved (when $p>2$) by \cite{GLS14} (when $K$ is unramified) and \cite{GLS13} (when $K$ is ramified). Our paper is a direct generalization of \cite{GLS14, GLS13}. In \textit{loc. cit.}, many results are proved in an \textit{ad hoc} way since $d=2$. The key insight in our paper is that we can reprove several theorems in \textit{loc. cit.} in a way that can be generalized to higher dimensions (e.g., Proposition \ref{shape} in our paper). Also, with the help and inspiration from the unpublished notes \cite{GLS14+}, we are able to formulate our crystalline lifting theorems in a new way (see Section 5 and Section 7) that can lead to better understanding of $(\varphi, \Ghat)$-modules. In particular, we can generalize Theorem \ref{Thm1} to the case where $K$ is ramified (see forthcoming \cite{Gao15ram}). We also note that by using some different crystalline lifting techniques (inspired by the work of \cite{BH15}), we can strengthen the result in Corollary \ref{introcor}(1) (i.e., the $K=\Qp$ case) in the forthcoming \cite{Gao15Qp}.

All the results mentioned in the previous paragraph have the limitation that we can only treat those Serre weights corresponding to Hodge-Tate weights in the range $[0, p]$. This is a serious limitation when the dimension $d>2$. The first breakthrough was due to \cite{EGH13} in the case when $K=\Qp$, $d=3$ and $\barrho$ is absolutely irreducible, using a technique called weight cycling. Also, the paper \cite{BLGG14} treated the case when $K=\Qp$ and $\barrho$ is semisimple of any dimension $d$ (see Corollary 4.1.14 of \textit{loc. cit.}). In particular, when $d=3$, there is a much more explicit and detailed result (Theorem 5.1.4 of \textit{loc. cit.}), although the detailed calculations for $d=3$ seem quite difficult to generalize to higher dimensions. A more recent substantial advance (after our paper was posted) is due to \cite{LLHLM}, where they completely proved the Serre weight conjectures (as conjectured by \cite{Her09}) in the case $K=\Qp$, $d=3$ and $\barrho$ is semi-simple (and generic). The key insight of \cite{LLHLM} is that they can explicitly compute certain \emph{potentially} crystalline deformation rings (whereas in the current paper, we only consider crystalline representations).

\smallskip

\noindent \textbf{Strategy for the main local results.}
Now, let us sketch the strategy of the proof of our main local result (Theorem \ref{Thm1}). The strategy follows closely that of \cite{GLS14} and \cite{GLS13}, and uses an induction process.

Let $\huaM$ be the Kisin module attached to $T$, which is a free module of rank $d$ over $W(k)[\![u]\!]\otimes_{\Zp} \mathcal O_E$, and decomposes as $\huaM =\Pi_{s=0}^{f-1} \huaM_s$, where $\huaM_s=\varepsilon_s\huaM$. We regard $\huaM_{s-1}$ as a $\varphi(\mathcal O_E[\![u]\!])$-submodule of $\huaM_s^{\ast}$. The first step in our proof is to control the shape the reduction of the Kisin module (i.e., the shape of $\barm$) associated to certain crystalline representations, and we can give an \emph{upper bound} for the number of these shapes.

Kisin modules only give us information on the $G_{\infty}$-action on representations. However, when the reduction of $T$ is upper triangular, under certain technical conditions, we can show that the $G_{\infty}$-action already determines the $G_K$-action. All these results put restrictions on the possible shapes of reductions of crystalline representations when the reduction is upper triangular. The number of possible shapes of reductions is also bounded by the upper bound we mentioned in the last paragraph.

The next step is to show that upper triangular crystalline representations already give rise to enough upper triangular reductions. We can give a precise number of all these upper triangular reductions coming this way, which happens to be exactly the same as the upper bound mentioned in the last paragraph. Thus by pigeonhole principle, all upper triangular reductions of crystalline representations can be obtained by reductions of upper triangular crystalline representations, and Theorem \ref{Thm1} is proved.

\smallskip
\noindent \textbf{Structure of the paper.}
We now explain the structure of this paper. In Section 1, we review the theory of Kisin modules and $(\varphi, \Ghat)$-modules with $\mathcal O_E$-coefficient (and $k_E$-coefficient). In particular, we review the structure of rank-1 modules. In Section 2, we take the first step in studying the shape of upper triangular Kisin modules with $k_E$-coefficient (which come from reduction of crystalline representations). In Section 3, we introduce the Condition \textbf{(C-1)}, which helps to avoid certain complication in the shape of upper triangular Kisin modules with $k_E$-coefficient. Then in Section 4, with the assumption \textbf{(C-1)}, we can continue the studies in Section 2, and give an \emph{upper bound} for the shapes of upper triangular Kisin modules with $k_E$-coefficient that we study. In Section 5, we show that certain set of extension classes have natural $\mathcal O_E$-module (and sometimes, $k_E$-vector space) structures, which will be used in the induction process. We also show that these $\mathcal O_E$-module structures are compatible with each other. In Section 6, we introduce the two conditions \textbf{(C-2A)} and \textbf{(C-2B)}. When either of the two conditions is satisfied, then roughly speaking, the $G_{\infty}$-information that Kisin modules carry actually determine the full $G_K$-information. In Section 7, we prove our main local result, combining everything in the previous sections. It relies on an induction process, where the $d=2$ case is proved in \cite{GLS14}. Finally in Section 8, we apply our local results to weight part of Serre's conjecture.


\smallskip
\noindent\textbf{Notations.}
In this paper, we frequently use boldface letters (e.g., $\bolde$) to mean a sequence of objects (e.g., $\bolde=(e_1, \ldots, e_d)$ a basis of some module). We use $\Mat(?)$ to mean the set of matrices with elements in $?$. We use notations like $[u^{r_1}, \ldots, u^{r_d}]$ to mean a diagonal matrix with the diagonal elements in the bracket. We use $Id$ to mean the identity matrix.


In this paper, \textbf{upper triangular} always means successive extension of rank-$1$ objects.
We use notations like $\mathcal E(m_d, \ldots, m_1)$ (note the order of objects) to mean the set of all upper triangular extensions of rank-1 objects in certain categories. That is, $m$ is in $\mathcal E(m_d, \ldots, m_1)$ if there is an increasing filtration $0=\Fil^0 m \subset \Fil^1 m \subset \ldots \subset \Fil^d m =m$ such that $\Fil^i m /\Fil^{i-1}m =m_i, \forall 1 \leq i \leq d$. \emph{Note} that we do not define any equivalence relations between elements in this set. This is in contrast with some other sets which we define in Section 6 (with notations $\Ext(\ast, \ast)$).

We normalize the Hodge-Tate weights so that $\HT_{\kappa}(\varepsilon_p)={1}$ for any $\kappa: K \to \overline{K}$, where $\varepsilon_p$ is the $p$-adic cyclotomic character.

We recall some notations in $p$-adic Hodge theory and integral $p$-adic Hodge theory. All these notations in fact work for all $K/\Qp$ with no ramification restriction.

We fix a system of elements $\{\mu_{p^n}\}_{n=0}^{\infty}$ in $\barK$, where $\mu_1=1$, $\mu_p$ is a primitive $p$-th root of unity, and $\mu_{p^{n+1}}^p=\mu_{p^n}, \forall n$.
Let $K_{p^{\infty}} = \cup_{n=0}^{\infty} K(\mu_{p^n})$, and $\hat{K}=K_{\infty, p^{\infty}} = \cup_{n=0}^{\infty} K(\pi_n, \mu_{p^n}).$
Note that $\hat{K}$ is the Galois closure of $K_{\infty}$, and let
$\Ghat =\Gal(\hat{K}/K)$, $H_K= \Gal(\hat{K}/K_{\infty})$, and $ G_{p^{\infty}} = \Gal(\hat{K}/K_{p^{\infty}}).$
When $p>2$, then $\hat G \simeq G_{p^{\infty}} \rtimes H_K$ and $G_{p^{\infty}} \simeq \Zp(1)$ by \cite[Lem. 5.1.2]{Liu08}, and so we can (and do) fix a topological generator $\tau$ of $G_{p^{\infty}}$. And we can furthermore assume that $\mu_{p^n}=\frac{\tau(\pi_n)}{\pi_n}$ for all $n$.

Let $C=\hat{\barK}$ be the completion of $\barK$, with ring of integers $\mathcal O_C$. Let $R: = \varprojlim \mathcal O_C/p$ where the transition maps are $p$-th power map. $R$ is a valuation ring with residue field $\bar k$ ($\bar k$ is the residue field of $C$). $R$ is a perfect ring of characteristic $p$. Let $W(R)$ be the ring of Witt vectors. Let $\underline \epsilon :=(\mu_{p^n})_{n=0}^{\infty} \in R$, $\underline \pi =(\pi_n)_{n=0}^{\infty} \in R$, and let $[\underline \epsilon], [\underline \pi]$ be their Teichm\"{u}ller representatives respectively in $W(R)$.

There is a map $\theta: W(R) \to \mathcal O_C$ which is the unique universal lift of the map $R \to \mathcal O_C/p$ (projection of $R$ onto the its first factor), and $\Ker \theta$ is a principle ideal generated by $\xi= [\overline \omega]+p$, where $\overline{\omega} \in R$ with $\omega^{(0)}=-p$, and $[\overline \omega] \in W(R)$ its Teichm\"{u}ller representative. Let $B_{\rm{dR}}^{+} := \varprojlim_{n} W(R)[\frac{1}{p}]/(\xi)^n$, and $B_{\rm{dR}}:=B_{\rm{dR}}^{+}[\frac{1}{\xi}]$. Let $t: = \log([\underline{\epsilon}])$, which is an element in $B_{\rm{dR}}^{+}$.

Let $A_{\rm cris}$ denote the $p$-adic completion of the divided power envelope of $W(R)$ with respect to $\Ker(\theta)$. Let $B_{\rm cris}^{+} = A_{\rm cris}[1/p]$ and $B_{\rm cris}:= B_{\rm cris}^{+}[\frac{1}{t}]$. Let $B_{\rm st}:=B_{\rm cris}[X]$ where $X$ is an indeterminate. There are natural Frobenius actions, monodromy actions and filtration structures on $B_{\rm cris}$ and $B_{\rm st}$, which we omit the definition. We have the natural embeddings $B_{\rm cris} \subset B_{\rm st} \subset B_{\rm{dR}}$.

Let $\huaS: = W(k)\llb u\rrb$, $E(u)\in W(k)[u]$ the minimal polynomial of $\pi$ over $W(k)$, and $S$ the $p$-adic completion of the PD-envelope of $\huaS$ with respect to the ideal $(E(u))$.
We can embed the $W(k)$-algebra $W(k)[u]$ into $W(R)$ by mapping $u$ to $[\underline \pi]$. The embedding extends to the embeddings $\huaS \hookrightarrow S \hookrightarrow A_{\rm cris}$.

The projection from $R$ to $\overline{k}$ induces a projection $\nu : W(R) \to W(\overline{k})$, since $\nu(\Ker \theta) = pW(\overline{k})$, the projection extends to $\nu: \Acris \to W(\overline{k})$, and also $\nu: B_{\text{cris}}^{+} \to W(\overline{k})[\frac{1}{p}]$. Write
$I_{+}B_{\text{cris}}^{+}:=\Ker(\nu: B_{\text{cris}}^{+} \to W(\overline{k})[\frac{1}{p}]),$
and for any subring $A \subseteq B_{\text{cris}}^{+}$, write $I_{+}A = A\cap \Ker(\nu)$. Also, define
$I^{[n]}B_{\text{cris}}^{+} := \{ x \in  B_{\text{cris}}^{+}: \varphi^k(x) \in \Fil^n B_{\text{cris}}^{+}, \text{ for all } k>0  \},$
and for any subring $A \subseteq B_{\text{cris}}^{+}$, write $I^{[n]}A:= A\cap I^{[n]}B_{\text{cris}}^{+}$.

There exists a nonzero $\huat \in W(R)$ such that $\varphi(\huat) = c_0^{-1}E(u) \huat$, where $c_0 p$ is the constant term of $E(u)$. Such $\huat$ is unique up to units in $\Zp$, and we can select a such $\huat$ such that $t= \lambda \varphi(\huat)$ with $\lambda = \Pi_{n=1}^{\infty} \varphi^n(\frac{c_0^{-1}E(u)}{p}) \in S^{\times}$. For all $n$, $I^{[n]}W(R)$ is a principle ideal, and by \cite[Lem. 3.2.2]{Liu10}, $(\varphi(\huat))^n$ is a generator of the ideal.

\smallskip
\noindent \textbf{Acknowledgement} This paper is a natural generalization of the results of Toby Gee, Tong Liu and David Savitt. It is a great pleasure to acknowledge their beautiful papers. In particular, the author would like to thank their great generosity in sharing with us their unpublished notes \cite{GLS14+}, which played an very important role in the later developments of this paper (see Section 5 of our paper).
We would like to thank Tong Liu and David Savitt for comments on an earlier draft.
This paper is written when the author is a postdoc in Beijing International Center for Mathematical Research, and we would like to thank the institute for the hospitality. The author also would like to heartily thank his postdoc mentor, Ruochuan Liu, for his constant interest, encouragement and support.
We would like to thank Florian Herzig, Stefano Morra, Yoshiyasu Ozeki, Chol Park, Zhongwei Yang, and Yuancao Zhang for various useful discussions and correspondences. The author also thank the anonymous referee(s) for useful comments which help to improve the exposition.
This work is partially supported by China Postdoctoral Science Foundation General Financial Grant 2014M550539.


\section{Kisin modules and $(\varphi, \Ghat)$-modules with coefficients}

\subsection{Kisin modules and $(\varphi, \Ghat)$-modules with coefficients}
In this subsection, we recall useful facts in the theory of Kisin modules and $(\varphi, \Ghat)$-modules with $\mathcal O_E$-coefficients. All results in this subsection work for any $K/\Qp$ and any $p$.

Recall that $\mathfrak{S}=W(k)[\![u]\!]$ with the Frobenius endomorphism $\varphi_{\huaS}: \huaS \to \huaS$ which acts on $W(k)$ via arithmetic Frobenius and sends $u$ to $u^p$.
Denote
$\huaS_{\mathcal O_E}:= \huaS \otimes_{\Zp}\mathcal O_E$ and $\huaS_{k_E}:= \huaS \otimes_{\Zp}k_E = k[\![u]\!] \otimes_{\Fp} k_E$,
and extend $\varphi_{\huaS}$ to $\huaS_{\mathcal O_E}$ (resp. $\huaS_{k_E}$) by acting on $\mathcal O_E$ (resp. $k_E$) trivially. Let $r$ be any nonnegative integer.

\begin{itemize}
 \item  Let $'\Mod_{\huaS_{\mathcal O_E}}^{\varphi}$ (called the category of Kisin modules of height $r$ with $\mathcal O_E$-coefficients) be the category whose objects are $\huaS_{\mathcal O_E}$-modules $\huaM$, equipped with $\varphi:\huaM\to\huaM$ which is a
$\varphi_{\huaS_{\mathcal O_E}}$-semi-linear morphism such that the span of $\text{Im}(\varphi)$ contains $E(u)^{r}\huaM$.
The morphisms in the category are $\huaS_{\mathcal O_E}$-linear maps that commute with $\varphi$.

\item Let $\Mod_{\huaS_{\mathcal O_E}}^{\varphi}$ be the full subcategory of $'\Mod_{\huaS_{\mathcal O_E}}^{\varphi}$ with $\huaM \simeq \oplus_{i \in I}\huaS_{\mathcal O_E}$ where $I$ is a finite set. Let $\Mod_{\huaS_{k_E}}^{\varphi}$ be the full subcategory of $'\Mod_{\huaS_{\mathcal O_E}}^{\varphi}$ with $\huaM \simeq \oplus_{i \in I}\huaS_{k_E}$ where $I$ is a finite set.
\end{itemize}

For any integer $n \geq 0$, write $n =(p-1)q(n)+r(n)$ with $q(n)$ and $r(n)$ the quotient and residue of $n$ divided by $p-1$. Let $\tn=(p^{q(n)}\cdot q(n)!)^{-1}\cdot t^n$, we have $\tn \in A_{\text{cris}}$.
We define a subring of $B_{\text{cris}}^{+}$,
$\mathcal{R}_{K_0} :=\left\{ \sum_{i=0}^{\infty} f_i t^{\{i\}} , f_i \in S_{K_0}, f_i \to 0 \text{ as } i \to \infty \right\}.$
Define $\hat{\mathcal{R}}:= \mathcal{R}_{K_0} \cap W(R)$. Then $\Rhat$ is a $\varphi$-stable subring of $W(R)$, which is also $G_K$-stable, and the $G_K$-action factors through $\Ghat$. Denote $\Rhat_{\mathcal O_E}:= \Rhat \otimes_{\Zp}\mathcal O_E, \quad W(R)_{\mathcal O_E}:= W(R) \otimes_{\Zp}\mathcal O_E,$
and extend the $G_K$-action and $\varphi$-action on them by acting on $\mathcal O_E$ trivially.
Note that $\huaS_{\mathcal O_E} \subset \hat R_{\mathcal O_E}$, and let $\varphi: \huaS_{\mathcal O_E} \to \hat R_{\mathcal O_E}$ be the composite of $\varphi_{\huaS_{\mathcal O_E}}: \huaS_{\mathcal O_E} \to \huaS_{\mathcal O_E}$ and the embedding $\huaS_{\mathcal O_E} \to \hat R_{\mathcal O_E}$.

\begin{defn}
Let $'\Mod_{\huaS_{\mathcal O_E}}^{\varphi, \Ghat}$ be the category (called the category of $(\varphi, \Ghat)$-modules of height $r$ with $\mathcal O_E$-coefficients) consisting of triples $(\huaM, \varphi_{\huaM}, \Ghat)$ where,
\begin{enumerate}
\item $(\huaM, \varphi_{\huaM}) \in '\Mod_{\huaS_{\mathcal O_E}}^{\varphi}$ is a Kisin module of height $r$;
\item $\Ghat$ is a $\Rhat_{\mathcal O_E}$-semi-linear $\Ghat$-action on $\mhat := \Rhat_{\mathcal O_E} \otimes_{\varphi, \huaS_{\mathcal O_E}} \huaM$;
\item $\Ghat$ commutes with $\varphi_{\mhat} : =\varphi_{\Rhat_{\mathcal O_E}}\otimes \varphi_{\huaM}$;
\item Regarding $\huaM$ as a $\varphi(\huaS_{\mathcal O_E})$-submodule of $\mhat$, then $\huaM \subseteq \mhat^{H_K}$;
\item $\Ghat$ acts on the $\mhat/(I_{+}\hat{R})\mhat$ trivially.
\end{enumerate}
A morphism between two $(\varphi, \Ghat)$-modules is a morphism in $\Mod_{\huaS_{\mathcal O_E}}^{\varphi}$ which commutes with $\Ghat$-actions.
\end{defn}

Let $\Mod_{\huaS_{\mathcal O_E}}^{\varphi, \Ghat}$ be the full subcategory of $'\Mod_{\huaS_{\mathcal O_E}}^{\varphi, \Ghat}$ where $\huaM \in \Mod_{\huaS_{\mathcal O_E}}^{\varphi}$.
Let $\Mod_{\huaS_{k_E}}^{\varphi, \Ghat}$ be the full subcategory of $'\Mod_{\huaS_{\mathcal O_E}}^{\varphi, \Ghat}$ where $\huaM \in \Mod_{\huaS_{k_E}}^{\varphi}$.

We summarize some useful results about Kisin modules and $(\varphi, \Ghat)$-modules with coefficients.
\begin{thm} We can associate representations to Kisin modules and $(\varphi, \Ghat)$-modules.
  \begin{enumerate}
    \item Suppose $\huaM \in \Mod_{\huaS_{\mathcal O_E}}^{\varphi}$ of $\huaS_{\mathcal O_E}$-rank $d$, then
        \begin{itemize}
          \item  $T_{\huaS}(\huaM): = \Hom_{\huaS, \varphi}(\huaM, W(R))$ and
          \item $T_{\huaS_{\mathcal O_E}}(\huaM): = \Hom_{\huaS_{\mathcal O_E}, \varphi}(\huaM, W(R)_{\mathcal O_E})$
        \end{itemize}
    are naturally isomorphic as finite free $\mathcal O_E$-representations of $G_{\infty}$ of rank $d$.

\item Suppose ${\huaM} \in \Mod_{\huaS_{k_E}}^{\varphi}$ of $\huaS_{k_E}$-rank $d$, then
\begin{itemize}
  \item $T_{\huaS}({\huaM}): = \Hom_{\huaS, \varphi}(\huaM, W(R)\otimes_{\Zp}\Qp/\Zp)$ and
  \item $T_{\huaS_{k_E}}({\huaM}): = \Hom_{\huaS_{k_E}, \varphi}(\huaM, W(R)_{\mathcal O_E}\otimes_{\Zp}\Qp/\Zp)$
\end{itemize}
are naturally isomorphic as $k_E$-representations of $G_{\infty}$ of dimension $d$.

\item Suppose $\mhat \in \Mod_{\huaS_{\mathcal O_E}}^{\varphi, \Ghat}$ where $\huaM$ is of $\huaS_{\mathcal O_E}$-rank $d$, then
\begin{itemize}
  \item $ \That(\mhat) := \Hom_{\Rhat, \varphi} (\mhat, W(R))$ and
  \item $ \That_{\huaS_{\mathcal O_E}}(\mhat) := \Hom_{\Rhat_{\mathcal O_E}, \varphi} (\mhat , W(R)_{\mathcal O_E})$
\end{itemize}
are naturally isomorphic as finite free $\mathcal O_E$-representations of $G_K$ of rank $d$.

\item Suppose $\mhat \in \Mod_{\huaS_{k_E}}^{\varphi, \Ghat}$ where $\huaM$ is of $\huaS_{k_E}$-rank $d$, then
\begin{itemize}
  \item $ \That(\mhat) := \Hom_{\Rhat, \varphi} (\mhat, W(R)\otimes_{\Zp}\Qp/\Zp)$ and
  \item $ \That_{\huaS_{k_E}}(\mhat) := \Hom_{\Rhat_{\mathcal O_E}, \varphi} (\mhat , W(R)_{\mathcal O_E}\otimes_{\Zp}\Qp/\Zp)$
\end{itemize}
are naturally isomorphic as $k_E$-representations of $G_K$ of dimension $d$.

  \end{enumerate}
\end{thm}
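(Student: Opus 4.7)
\medskip

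The four assertions are strictly parallel (they only differ in which Kisin/Liu equivalence one invokes and which coefficient ring one works over), so I sketch (1) and then indicate the variants for (2)--(4).

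I would first equip $T_{\huaS}(\huaM)$ with its claimed $\OE$-structure. Since $\OE$ is $\Zp$-free of rank $n:=[\OE:\Zp]$, any $\huaM\in \Mod_{\huaS_{\OE}}^{\varphi}$ of $\huaS_{\OE}$-rank $d$ underlies a rank-$nd$ Kisin module over $\huaS$. Applying Kisin's classical equivalence (without coefficients), $T_{\huaS}(\huaM)$ is free of rank $nd$ over $\Zp$ with continuous $G_{\infty}$-action. The $\OE$-action on $\huaM$ commutes with $\huaS$ and $\varphi$, so by functoriality $T_{\huaS}(\huaM)$ inherits an $\OE$-action via $(a\cdot f)(x):=f(ax)$ that commutes with $G_{\infty}$; it is $\OE$-free of rank $d$ because it is $\Zp$-torsion-free of finite $\Zp$-rank $nd$ and $\OE$ is a DVR. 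A parallel argument handles $T_{\huaS_{\OE}}(\huaM)$: the $\OE$-structure comes from the target $W(R)_{\OE}$, and the rank is again $d$.

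I would then construct a natural $\OE[G_{\infty}]$-linear isomorphism $\alpha:T_{\huaS_{\OE}}(\huaM)\to T_{\huaS}(\huaM)$. Fixing a $\Zp$-basis $\{e_i\}_{i=1}^n$ of $\OE$ with $e_1=1$, we have $W(R)_{\OE}=\bigoplus_i W(R)\otimes e_i$; the projection $\pi_1:W(R)_{\OE}\twoheadrightarrow W(R)$ onto the first summand is $\Zp$-linear, $\varphi$-equivariant, and $G_{\infty}$-equivariant, and post-composition with $\pi_1$ defines $\alpha:F\mapsto \pi_1\circ F$. The $\OE$-linearity of $F$ combined with the multiplication structure of $\OE$ (writing $ae_i=\sum_j m_{ij}(a)e_j$ with $m_{ij}(a)\in\Zp$) shows that the full tuple $\{\pi_j\circ F\}_j$ can be recovered from $\pi_1\circ F$ alone, so $\alpha$ is injective; a rank count then gives bijectivity. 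The $\OE$-linearity and naturality of $\alpha$ in $\huaM$ are immediate, and the apparent basis-dependence is removed by the basis-free reformulation as the natural tensor-hom adjunction $\Hom_{\huaS_{\OE},\varphi}(\huaM, W(R)\otimes_{\Zp}\OE)\cong\Hom_{\huaS,\varphi}(\huaM, W(R))$.

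For (2), the same argument applies verbatim after replacing $W(R)$ by $W(R)\otimes_{\Zp}\Qp/\Zp$ and invoking Kisin's equivalence for torsion Kisin modules. For (3) and (4), one additionally tracks the $\Ghat$-action on $\mhat$: since $\Ghat$ acts on $\Rhat_{\OE}=\Rhat\otimes_{\Zp}\OE$ trivially on the $\OE$-factor, the analogous comparison map commutes with the $\Ghat$-actions on source and target. The main (mild) technical obstacle is verifying that $\alpha$ really is natural and basis-independent, rather than merely bijective on the nose; this is the only nontrivial point and amounts to a routine tensor-hom bookkeeping check.
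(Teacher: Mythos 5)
The paper does not actually prove this theorem; it cites Proposition 3.4 and Theorem 5.2 of \cite{GLS14} and Proposition 9.1.8 of \cite{Lev13}, together with \cite{Kis06, Liu10} for the case without coefficients. Your proposal instead attempts an explicit, self-contained argument, which is a legitimately different route. Unfortunately it contains a genuine gap at its central step.

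The problem is with the map $\alpha\colon F\mapsto \pi_1\circ F$, and it has two faces. First, the slogan ``a rank count then gives bijectivity'' is false over a DVR: an injective $\OE$-linear map between free $\OE$-modules of the same rank need not be surjective (multiplication by $\omega_E$ on $\OE^d$ is the obvious counterexample), so injectivity plus equality of ranks does not finish. Second, and more to the point, the specific choice $\pi_1$ is wrong in general. Writing $\pi_j=\mathrm{id}_{W(R)}\otimes e_j^{\ast}$ with $e_j^{\ast}\in\OE^{\vee}:=\Hom_{\Zp}(\OE,\Zp)$ the dual basis, post-composition with $\mathrm{id}\otimes\lambda$ for $\lambda\in\OE^{\vee}$ gives an isomorphism
$\Hom_{\huaS_{\OE},\varphi}(\huaM, W(R)_{\OE})\to\Hom_{\huaS,\varphi}(\huaM, W(R))$
exactly when $\lambda$ generates $\OE^{\vee}$ as an $\OE$-module. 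The element $e_1^{\ast}$ is such a generator only when $\OE/\Zp$ is unramified. If $E/\Qp$ is ramified (the paper allows this), then $e_1^{\ast}=\delta\cdot\lambda_0$ for a generator $\lambda_0$ of $\OE^{\vee}$ and a non-unit $\delta$ generating the different; your $\alpha$ then equals $\delta\cdot\alpha_{\lambda_0}$, which is injective (this part of your argument is fine, using that $W(R)$ is $p$-torsion-free) but has image the proper sublattice $\delta\cdot T_{\huaS}(\huaM)$.

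Relatedly, the ``basis-free reformulation as the natural tensor-hom adjunction'' is not quite the adjunction. What hom-tensor (coinduction) adjunction gives canonically is
$\Hom_{\huaS,\varphi}(\huaM, W(R))\cong\Hom_{\huaS_{\OE},\varphi}(\huaM, \Hom_{\Zp}(\OE,W(R)))\cong\Hom_{\huaS_{\OE},\varphi}(\huaM, W(R)\otimes_{\Zp}\OE^{\vee})$,
with $\OE^{\vee}$, not $\OE$, on the right. Identifying $\OE^{\vee}$ with $\OE$ requires choosing a generator of the inverse different, and your $\pi_1$ silently makes the wrong choice. The fix is to replace $\pi_1$ by $\mathrm{id}\otimes\lambda_0$ for a fixed generator $\lambda_0$ of $\OE^{\vee}$ (equivalently, to choose the $\Zp$-basis $\{e_i\}$ so that $e_1^{\ast}$ generates $\OE^{\vee}$), or to observe that in the present paper one may always take $E/\Qp$ unramified (as $K/\Qp$ is unramified, so is its Galois closure), in which case your $\alpha$ is correct; but as written the argument is not valid for arbitrary $\OE$. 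With that correction the rest of the argument (the transport to parts (2)--(4) and the $\Ghat$-equivariance) goes through.
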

\begin{proof}
These statements are first prove in \cite{Kis06, Liu10} without considering the $\mathcal O_E$ (or $k_E$)-coefficients. For the proof concerning $\mathcal O_E$ (or $k_E$)-coefficients, and the isomorphisms between the two ways of constructing representations, see \cite[Prop. 3.4, Thm. 5.2]{GLS14}, as well as \cite[Prop. 9.1.8]{Lev13}.
\end{proof}

\begin{thm} \label{huaMcoeff} \hfill
 \begin{enumerate}
  \item For $\huaM \in \Mod_{\huaS_{\mathcal O_E}}^{\varphi}$, we have $T_{\huaS}(\huaM/\omega_E\huaM)\simeq T_{\huaS}(\huaM)/\omega_ET_{\huaS}(\huaM).$
  \item The functor $T_{\huaS}: \Mod_{\huaS_{\mathcal O_E}}^{\varphi} \to \Rep_{\mathcal O_E}(G_{\infty})$ is exact and fully faithful.
  \item Suppose $V$ is a semi-stable representation of $G_K$ over an $E$-vector space, with Hodge-Tate weights in $\{0, \ldots, r\}$ when considering $V$ as a $\Qp$-vector space. Suppose $L \subset V$ is a $G_{\infty}$-stable $\mathcal O_E$-lattice, then there exists $\huaM \in \Mod_{\huaS_{\mathcal O_E}}^{\varphi}$, such that $T_{\huaS}(\huaM) \simeq L$.

\end{enumerate}
\end{thm}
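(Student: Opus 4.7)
My plan is to prove each of the three parts by reducing to the non-coefficient versions of the corresponding theorems in \cite{Kis06, Liu10} and then transferring the $\mathcal O_E$-structure via naturality and uniqueness.

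For (1), I would use that $\huaM$ is free over $\huaS_{\mathcal O_E}$ to produce the short exact sequence $0 \to \huaM \xrightarrow{\omega_E} \huaM \to \huaM/\omega_E \huaM \to 0$, then apply $\Hom_{\huaS_{\mathcal O_E}, \varphi}(-, W(R)_{\mathcal O_E})$. Identifying $W(R)_{\mathcal O_E}/\omega_E$ with the $\omega_E$-torsion of $W(R)_{\mathcal O_E} \otimes_{\mathcal O_E} (E/\mathcal O_E) \simeq W(R) \otimes_{\Zp} \Qp/\Zp$ matches the definition of $T_\huaS$ on the torsion module $\huaM/\omega_E\huaM$. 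Injectivity of the resulting comparison map is easy because $W(R)_{\mathcal O_E}$ is $\omega_E$-torsion-free. The delicate step is surjectivity: I would lift a $\varphi$-equivariant map modulo $\omega_E$ first to an $\huaS_{\mathcal O_E}$-linear map $f_0$ ignoring $\varphi$-equivariance, then correct the defect $\varphi \circ f_0 - f_0 \circ \varphi \in \omega_E W(R)_{\mathcal O_E}$ by iterating in powers of $\omega_E$, using the $\omega_E$-adic completeness and torsion-freeness of $W(R)_{\mathcal O_E}$.

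For (2), I would observe that the forgetful functor from $\Mod_{\huaS_{\mathcal O_E}}^{\varphi}$ to $\Mod_{\huaS}^{\varphi}$ is exact and faithful, and the $\mathcal O_E$-coefficient $T_\huaS$ agrees with the non-coefficient $T_\huaS$ on underlying $\Zp[G_\infty]$-modules. Exactness then follows from \cite{Kis06, Liu10}. For fully faithfulness, a $G_\infty$- and $\mathcal O_E$-equivariant map of representations corresponds, by non-coefficient fully faithfulness, to a unique $\huaS$- and $\varphi$-linear map of Kisin modules; uniqueness forces this map to be $\mathcal O_E$-equivariant, as the $\mathcal O_E$-action on either side is encoded by endomorphisms functorial under $T_\huaS$.

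For (3), I would first forget the $\mathcal O_E$-structure: viewing $V$ as a $\Qp$-semistable $G_K$-representation of dimension $d \cdot [E : \Qp]$ with Hodge-Tate weights in $[0, r]$, and $L$ as a $G_\infty$-stable $\Zp$-lattice, the main theorem of \cite{Kis06}, together with the $G_\infty$-descent in \cite{Liu10}, produces a Kisin module $\huaM_0 \in \Mod^{\varphi}_{\huaS}$ of height $r$ with $T_\huaS(\huaM_0) \simeq L$ as $\Zp[G_\infty]$-modules. By the fully faithfulness in (2) applied without coefficients, the $\mathcal O_E$-action on $L$ lifts uniquely to an $\mathcal O_E$-action on $\huaM_0$ by $\huaS$-linear, $\varphi$-equivariant endomorphisms, endowing $\huaM_0$ with the structure of a finitely generated $\huaS_{\mathcal O_E}$-module. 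The main obstacle I anticipate is proving that $\huaM_0$ is \emph{free} over $\huaS_{\mathcal O_E}$, not merely finitely generated (or projective). I would tackle this by decomposing $\huaS_{\mathcal O_E}$ according to the product factorization of $W(k) \otimes_{\Zp} \mathcal O_E = \prod_s \mathcal O_E$ indexed by $\mathcal S$, reducing modulo $(u, \omega_E)$ to get a $k_E$-vector space of dimension $d$ on each factor, and then applying Nakayama's lemma together with the fact that $\huaM_0$ is $u$-torsion-free and of the correct total $\huaS$-rank $d \cdot [E : \Qp]$ to conclude componentwise freeness.
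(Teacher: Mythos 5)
The paper does not give its own proof here; the stated ``proof'' is a pointer to Section 3 and Subsection 5.1 of \cite{GLS14} and Section 4 of \cite{Lev14}, which carry out essentially the reduction to the coefficient-free theorems of \cite{Kis06} and \cite{Liu10} that you propose. Your argument for (2) is complete, and your strategy for (1) and (3) is the right one; you have also correctly identified freeness over $\huaS_{\mathcal O_E}$ as the crux of (3).

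The resolution you sketch for (3) has a gap. You assert that reducing $\huaM_0$ modulo $(u,\omega_E)$ gives a $k_E$-vector space of dimension $d$ on each of the $f$ factors of $W(k)\otimes_{\Zp}\mathcal O_E\simeq\prod_s\mathcal O_E$, but $u$-torsion-freeness together with the total $\huaS$-rank only yields $\sum_s d_s=df$ for the factor-wise minimal generator counts $d_s$, not that each $d_s=d$. Your Nakayama-plus-rank argument therefore proves freeness of $\huaM_0$ over $\prod_s\mathcal O_E[\![u]\!]$ with possibly unequal ranks $(d_s)$, whereas membership in $\Mod_{\huaS_{\mathcal O_E}}^{\varphi}$ requires $\huaM\simeq\oplus_i\huaS_{\mathcal O_E}$, i.e.\ equal rank on every factor. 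The missing input is the Frobenius: $\varphi_{\huaS_{\mathcal O_E}}$ cyclically permutes the idempotents $\varepsilon_s$, so $\varphi_{\huaM_0}$ carries $\huaM_{0,s-1}$ into $\huaM_{0,s}$; the height-$r$ condition forces the $\mathcal O_E[\![u]\!]$-span of $\varphi(\huaM_{0,s-1})$ inside $\huaM_{0,s}$ to contain $E(u)^r\huaM_{0,s}$ and hence to have full rank $d_s$, which gives $d_{s-1}\geq d_s$ for all $s$ cyclically and thus forces all the $d_s$ to be equal. A smaller remark on (1): the surjectivity step in your iteration requires, at each stage, solving a twisted Artin--Schreier equation for an $\huaS_{k_E}$-linear correction with values in $W(R)_{\mathcal O_E}/\omega_E$; that solvability is the real content of the statement (it amounts to vanishing of the relevant $\Ext^1$ and uses the finite $E$-height hypothesis in an essential way), and presenting it as an automatic consequence of $\omega_E$-adic completeness and torsion-freeness understates the difficulty.
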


\begin{thm} \label{hatMcoeff} \hfill
 \begin{enumerate}
  \item For $\mhat \in \Mod_{\huaS_{\mathcal O_E}}^{\varphi, \Ghat}$, we have $\That(\mhat/\omega_E\mhat) \simeq \That(\mhat)/\omega_E\That(\mhat)$.
\item There is natural isomorphism $\That(\mhat) \mid_{G_{\infty}} \simeq T_{\huaS}(\huaM)$ as $\mathcal O_E[G_{\infty}]$-representations.
  \item $\That$ induces an anti-equivalence between the category $\Mod_{\huaS_{\mathcal O_E}}^{\varphi, \Ghat}$ and the category of $G_K$-stable $\mathcal O_E$-lattices in semi-stable $E$-representations of $G_K$ with Hodge-Tate weights in $\{0, \ldots, r\}$ (when considering $V$ as a $\Qp$-vector space).
\end{enumerate}
\end{thm}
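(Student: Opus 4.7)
The plan is to derive all three parts from the corresponding coefficient-free results in \cite{Liu10} (and \cite{Kis06}), treating the $\mathcal O_E$-action as an auxiliary symmetry that passes through the natural constructions, much as was done for Kisin modules in Proposition 3.4 and Theorem 5.2 of \cite{GLS14}.

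I would start with part (2), which is the most formal. Since $\mhat = \hat R_{\mathcal O_E}\otimes_{\varphi, \huaS_{\mathcal O_E}}\huaM$ by definition, the tensor-Hom adjunction gives a natural identification
$$\Hom_{\hat R_{\mathcal O_E},\varphi}(\mhat, W(R)_{\mathcal O_E}) \;\cong\; \Hom_{\huaS_{\mathcal O_E},\varphi}(\huaM, W(R)_{\mathcal O_E}),$$
where on the right $W(R)_{\mathcal O_E}$ is viewed as an $\huaS_{\mathcal O_E}$-module via $\varphi$. The composition $G_\infty \hookrightarrow G_K \twoheadrightarrow \hat G$ lands in $H_K$, and by condition (4) of the definition $\huaM \subseteq \mhat^{H_K}$, so $H_K$ acts trivially on $\huaM$. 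Thus the $G_\infty$-action on both sides originates only from the target, and the identification is $G_\infty$-equivariant. Combining with the comparison $T_{\huaS_{\mathcal O_E}}(\huaM) \simeq T_{\huaS}(\huaM)$ from Theorem 1.2 concludes this part.

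For part (3), I would reduce to the $\Zp$-coefficient anti-equivalence of \cite{Liu10}. Full faithfulness is a direct enhancement: an $\mathcal O_E$-linear morphism of $(\varphi,\hat G)$-modules is a $\Zp$-linear one commuting with the $\mathcal O_E$-action, and likewise on the representation side. Essential surjectivity proceeds as follows: given a $G_K$-stable $\mathcal O_E$-lattice $L$ in a semi-stable $E$-representation $V$, Theorem 1.2(3) produces a Kisin module $\huaM \in \Mod_{\huaS_{\mathcal O_E}}^{\varphi}$ with $T_\huaS(\huaM) \simeq L|_{G_\infty}$. Using part (2) and the faithfulness of restriction to $G_\infty$ on the representation side (\cite{Liu10}), one transports the $\hat G$-action on $L$ to an $\hat R_{\mathcal O_E}$-semilinear $\hat G$-action on $\mhat = \hat R_{\mathcal O_E}\otimes_{\varphi,\huaS_{\mathcal O_E}}\huaM$, and one checks that all of the axioms (1)--(5) hold because they hold at the level of the underlying $\Zp$-objects (forgetting the $\mathcal O_E$-action).

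Finally, for part (1), I would apply $\hat T$ to the short exact sequence
$$0 \to \mhat \xrightarrow{\omega_E} \mhat \to \mhat/\omega_E\mhat \to 0,$$
paired with the coefficient sequence $0 \to W(R)_{\mathcal O_E} \to W(R)_{\mathcal O_E}[1/\omega_E] \to W(R)_{\mathcal O_E}\otimes_{\Zp}\Qp/\Zp \to 0$. A snake-lemma argument identifies $\That(\mhat/\omega_E\mhat)$ with the $\omega_E$-torsion of $\Hom(\mhat, W(R)_{\mathcal O_E}\otimes \Qp/\Zp)$, which in turn equals $\That(\mhat)/\omega_E\That(\mhat)$ once one uses the $\omega_E$-torsion-freeness of $\That(\mhat)$ (a consequence of part (3)) and the corresponding exactness statement for the Kisin module functor in Theorem \ref{huaMcoeff}(1)--(2). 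The main obstacle I anticipate is the careful bookkeeping in part (1), where the change of target ring (finite free versus torsion coefficients) must be reconciled with reduction mod $\omega_E$ on the source; everything else is a fairly mechanical upgrade of \cite{Liu10} to $\mathcal O_E$-coefficients.
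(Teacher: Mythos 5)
The paper does not actually prove this theorem: the remark following it simply cites Section~3 and Subsection~5.1 of \cite{GLS14}, Section~4 of \cite{Lev14}, and the coefficient-free theory in \cite{Kis06}, \cite{Liu10}, \cite{CL11}, singling out Theorem~4.1.6 of \cite{Lev14} for part~(3). Your overall strategy --- establish everything over $\Zp$ and carry the commuting $\mathcal O_E$-action along --- is exactly what those references do, and your reductions for parts~(1) and~(3) track the standard arguments.

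The step you gloss over is the ``tensor-Hom adjunction'' in part~(2). Adjunction does give
$$\Hom_{\hat R_{\mathcal O_E}}\bigl(\hat R_{\mathcal O_E}\otimes_{\varphi,\huaS_{\mathcal O_E}}\huaM,\, W(R)_{\mathcal O_E}\bigr)\;\cong\;\Hom_{\huaS_{\mathcal O_E}}\bigl(\huaM,\, W(R)_{\mathcal O_E}\bigr),$$
but on the right the $\huaS_{\mathcal O_E}$-module structure on $W(R)_{\mathcal O_E}$ is the one obtained by restriction along $\varphi\colon\huaS_{\mathcal O_E}\to\hat R_{\mathcal O_E}$, while $T_{\huaS}(\huaM)=\Hom_{\huaS_{\mathcal O_E},\varphi}(\huaM, W(R)_{\mathcal O_E})$ uses the \emph{untwisted} inclusion $\huaS_{\mathcal O_E}\hookrightarrow W(R)_{\mathcal O_E}$. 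So restricting $\hat f$ to $1\otimes\huaM$ produces a $\varphi$-\emph{semilinear} map, not an $\huaS_{\mathcal O_E}$-linear one. One has to undo the twist using that $\varphi$ is bijective on $W(R)$ (because $R$ is perfect): the actual comparison sends $\hat f$ to $m\mapsto\varphi^{-1}\bigl(\hat f(1\otimes m)\bigr)$, after which $\huaS$-linearity and $\varphi$-compatibility need to be verified. That this map is a bijection onto $T_{\huaS}(\huaM)$ is the content of Liu's comparison theorem, not a purely formal consequence of adjunction. Your observation that $G_\infty$-equivariance holds because the image of $G_\infty$ in $\hat G$ is $H_K$, which acts trivially on $\huaM\subseteq\mhat^{H_K}$ by condition~(4) of the definition, is correct and is the right ingredient for the equivariance claim.
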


\begin{rem}
The proof of the statements in Theorem \ref{huaMcoeff} and Theorem \ref{hatMcoeff} can either be found or easily deduced, from \cite[\S 3, \S 5.1]{GLS14} and \cite[\S 4]{Lev14}. They are in turn, based on works in \cite{Kis06, Liu10, CL11}, where they developed the theory without
$\mathcal O_E$-coefficients. We also remark that Statement (3) in Theorem \ref{hatMcoeff} first appeared in \cite[Thm. 4.1.6]{Lev14}.
\end{rem}

\subsection{$(\varphi, \Ghat)$-modules when $p>2$}
When $p>2$, the theory of $(\varphi, \Ghat)$-modules becomes simpler.

\begin{lemma} \label{pnot2}
Suppose $p>2$. Let $\mhat \in \Mod_{\huaS_{\mathcal O_E}}^{\varphi, \Ghat}$. Then $\mhat$ is uniquely determined up to isomorphism by the following information:
\begin{enumerate}
  \item A matrix $A_{\varphi} \in \Mat(\huaS_{\mathcal O_E})$ for the Frobenius $\varphi: \huaM \to \huaM$, such that there exist $B \in \Mat(\huaS_{\mathcal O_E})$ with $A_{\varphi}B=E(u)^r Id$.
   \item A matrix $A_{\tau} \in \Mat(\hat{R}_{\mathcal O_E})$ (for the $\tau$-action $\tau: \mhat \to \mhat$) such that
   \begin{itemize}
     \item $A_{\tau}-Id \in \Mat(I_{+}\Rhat_{\mathcal O_E}),$
     \item $A_{\tau}\tau(\varphi(A_{\varphi}))=\varphi(A_{\varphi})\varphi(A_{\tau}).$
     \item $g(A_{\tau}) = \prod_{k=0}^{\varepsilon_p(g)-1} \tau^k(A_{\tau})$ for all $g\in G_{\infty}$ such that $\varepsilon_p(g) \in \mathbb{Z}^{\geq 0}$.
   \end{itemize}

\end{enumerate}
\end{lemma}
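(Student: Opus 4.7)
The plan is to fix an $\huaS_{\mathcal O_E}$-basis $\bolde=(e_1,\ldots,e_d)$ of $\huaM$, which simultaneously gives an $\Rhat_{\mathcal O_E}$-basis $1\otimes\bolde$ of $\mhat$, and then translate every piece of the datum defining $\mhat$ into a matrix condition. Writing $\varphi(\bolde)=A_\varphi\bolde$ and $\tau(1\otimes\bolde)=A_\tau(1\otimes\bolde)$, the height condition on $\huaM$ produces the required $B\in\Mat(\huaS_{\mathcal O_E})$ with $A_\varphi B=E(u)^r\Id$. Condition (5) of the definition of $(\varphi,\Ghat)$-module forces $\tau\equiv\Id$ on $\mhat/I_{+}\Rhat\mhat$, i.e. $A_\tau-\Id\in\Mat(I_{+}\Rhat_{\mathcal O_E})$; the commutation $\varphi_{\mhat}\circ\tau=\tau\circ\varphi_{\mhat}$ applied to $\bolde$ yields $A_\tau\tau(\varphi(A_\varphi))=\varphi(A_\varphi)\varphi(A_\tau)$. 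Thus the three bulleted conditions on $A_\tau$ are all necessary.

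For the last (cocycle) condition, I use that for $p>2$, Liu's structural result gives $\Ghat\simeq G_{p^\infty}\rtimes H_K$ with $G_{p^\infty}\simeq\Zp(1)$ topologically generated by $\tau$, and with conjugation $h\tau h^{-1}=\tau^{\varepsilon_p(h)}$ for $h\in H_K$. Since $\huaM\subseteq\mhat^{H_K}$, any $g\in G_\infty$ (whose image in $\Ghat$ lies in $H_K$) fixes the basis $1\otimes\bolde$ and acts only through its action on the coefficient ring $\Rhat_{\mathcal O_E}$. Evaluating the identity $g\tau=\tau^{\varepsilon_p(g)}g$ on $\bolde$, the left side produces $g(A_\tau)\bolde$ while iterating $\tau$ on the right produces $\prod_{k=0}^{\varepsilon_p(g)-1}\tau^k(A_\tau)\bolde$; this is exactly the third bulleted relation (which need only be stated for $\varepsilon_p(g)\in\mathbb Z^{\geq 0}$, the general case following by $p$-adic continuity).

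Conversely, given such data, I recover $\mhat$ by reversing the process: let $\huaM$ be the free $\huaS_{\mathcal O_E}$-module on $\bolde$ with Frobenius $A_\varphi$ (the height condition places it in $\Mod^\varphi_{\huaS_{\mathcal O_E}}$), set $\mhat=\Rhat_{\mathcal O_E}\otimes_{\varphi,\huaS_{\mathcal O_E}}\huaM$ with induced Frobenius, define $\tau$ by $A_\tau$, and let $H_K$ fix $1\otimes\bolde$ and act semi-linearly on $\Rhat_{\mathcal O_E}$. The topological unipotency $A_\tau\equiv\Id\pmod{I_{+}\Rhat_{\mathcal O_E}}$ lets me extend the $\tau$-action continuously to a $G_{p^\infty}$-action (the sole available topological generator), and the cocycle condition (iii) makes the resulting maps compatible with the semi-direct product structure $\Ghat=G_{p^\infty}\rtimes H_K$, yielding a well-defined $\Ghat$-action. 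One then checks the five axioms of $\Mod^{\varphi,\Ghat}_{\huaS_{\mathcal O_E}}$ directly from the matrix relations. Two different bases of $\huaM$ lead to conjugate data via a change-of-basis matrix in $\GL_d(\huaS_{\mathcal O_E})$, so the isomorphism class of $\mhat$ depends only on the data up to this equivalence.

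The step I expect to require the most care is the continuous extension of $\tau\mapsto A_\tau$ to a genuine $G_{p^\infty}$-action and the verification that this extension is compatible with the $H_K$-action through the semi-direct product; here the subtlety is that $\tau^a$ for $a\in\Zp$ is not literally an $a$-th power of a single matrix but a $p$-adic limit of the iterated formulas $\prod_{k=0}^{n-1}\tau^k(A_\tau)$, and one must exploit the fact that $\Rhat$ (rather than all of $W(R)$) has precisely the topology making this limit well-defined and $\Ghat$-stable. Fortunately, when $p>2$ this mechanism has been worked out in detail in \cite{Liu10} and reviewed with $\mathcal O_E$-coefficients in Section~3 of \cite{GLS14}, so I can invoke those arguments rather than redo them.
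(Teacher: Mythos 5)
Your proof is correct and is essentially the paper's own argument written out in full: the paper merely cites the semi-direct product decomposition $\Ghat \simeq G_{p^{\infty}} \rtimes H_K$ (with $\tau$ topologically generating $G_{p^{\infty}}$) from \cite{Liu08}, together with Proposition 1.3 of \cite{Car13} for the cocycle condition, and these two facts are exactly the skeleton you flesh out. The only cosmetic difference is that you write matrices on the left ($\varphi(\bolde)=A_{\varphi}\bolde$), whereas the paper's row-vector conventions (cf.\ Theorem \ref{GLS}) would naturally produce the product $\prod_{k=0}^{\varepsilon_p(g)-1}\tau^k(A_\tau)$ with $k$ increasing from left to right; this is a harmless notational mismatch.
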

\begin{proof}
This is because when $p>2$, we have $\hat G \simeq G_{p^{\infty}} \rtimes H_K$, and $G_{p^{\infty}}$ is topologically generated by $\tau$. The last bullet item ($g(A_{\tau}) = \prod_{k=0}^{\varepsilon_p(g)-1} \tau^k(A_{\tau})$) in Condition (2) is needed by \cite[Prop. 1.3]{Car13}.
\end{proof}

We can detect $(\varphi, \Ghat)$-modules that are crystalline, by the following theorem.

\begin{thm}[{\cite[Prop. 5.9]{GLS14}, \cite[Thm. 21]{Oze14}}] \label{Ozeki}
Suppose $p>2$, and let $\mhat \in \Mod_{\huaS_{\mathcal O_E}}^{\varphi, \Ghat}$. Then $\hat T(\mhat)\otimes_{\mathcal O_E}E$ is a crystalline representation if and only if
$$(\tau-1 )(\huaM) \in \mhat \cap (u^p\varphi(\mathfrak t) W(R)\otimes_{\varphi, \huaS}\huaM) =\mhat \cap (u^p\varphi(\mathfrak t) W(R)_{\mathcal O_E}\otimes_{\varphi, \huaS_{\mathcal O_E}}\huaM).$$
\end{thm}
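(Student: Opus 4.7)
The plan is to translate the crystalline condition ($N = 0$ on $D_{\st}(V)$ where $V = \hat T(\mhat) \otimes_{\OE} E$) into a condition on the $\tau$-action by recovering the filtered $(\varphi,N)$-module from the $(\varphi, \Ghat)$-module. First I would dispense with the equality of the two intersections on the right: since $\mathcal{O}_E$ is $\Z_p$-flat and $\huaM$ is already an $\OE$-module, the tensor product $W(R)_{\OE}\otimes_{\varphi,\huaS_{\OE}}\huaM$ coincides with $W(R)\otimes_{\varphi,\huaS}\huaM$ (viewed inside $\mhat$), so the two intersections agree. Thus only the equivalence
$$V \text{ is crystalline} \iff (\tau-1)(\huaM) \subseteq u^p\varphi(\huat)\, W(R)\otimes_{\varphi,\huaS}\huaM$$
needs to be proved.

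Second, I would set up the comparison with $D_{\st}(V)$. Following Liu's construction, one inverts $p$ to obtain a rational $(\varphi,\Ghat)$-module and forms the completion $\Rhat_{K_0}\otimes_{\varphi,\huaS}\huaM$ inside $\mathcal{R}_{K_0}\otimes_{\varphi,\huaS}\huaM$; passing to the quotient modulo $I_+$ (together with the induced Frobenius, filtration and monodromy) recovers $D_{\st}(V)$. The monodromy operator $N$ on this quotient is extracted from the $\tau$-action via the formula $N = -\tfrac{1}{t}\log(\tau)$ (applied to a section and then reduced), using $t = \lambda\varphi(\huat)\in B_{\cris}^+$.

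Third, with that machinery in place, the forward implication goes as follows. If $V$ is crystalline then $N=0$ on $D_{\st}(V)$, which means $\log(\tau)$ annihilates the relevant quotient, i.e.\ $\log(\tau)(\huaM)$ lies in $tB_{\cris}^+ \otimes \huaM$ up to the appropriate completion. Since $\tau$ acts trivially modulo $I_+\Rhat$ one has $(\tau-1)^2(\huaM)\subseteq I^{[2]}$-part, so $\log(\tau)\equiv \tau-1$ modulo terms already in $u^p\varphi(\huat)W(R)\otimes\huaM$ (using that $I^{[2]}W(R) = (\varphi(\huat))^2 W(R)$ is already deep enough, and that every element of $\huaM$ sits in $\varphi(\huaS_{\OE})\cdot\mhat$, hence carries a factor of $u^p=\varphi(u)$). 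Combining, $(\tau-1)(\huaM)$ lands in $u^p\varphi(\huat)W(R)\otimes\huaM$ as required.

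Fourth, the reverse implication is the structural one: assume the condition on $(\tau-1)\huaM$. Then on the quotient $\mhat/I^{[2]}\mhat$ the operator $(\tau-1)/(\varphi(\huat))$ vanishes when further reduced modulo $I_+$, hence $N$ (which equals this quantity up to the unit $\lambda$ and the extra factor coming from $u^p=\varphi(u)$) acts as zero on $D_{\st}(V)$, so $V$ is crystalline. The main obstacle I anticipate is the bookkeeping between the ideals: pinning down exactly \emph{why} the power $u^p$ (rather than $u$ or $\varphi(u)$ alone) is the correct multiplier, and why $\varphi(\huat)$ (rather than $\huat$ or $t$) is the correct generator. This requires carefully tracking how the Frobenius twist $\mhat = \Rhat_{\OE}\otimes_{\varphi,\huaS_{\OE}}\huaM$ interacts with the ideals $I^{[n]}W(R)$ generated by $\varphi(\huat)^n$ (Lemma 3.2.2 of Liu), and with the fact that the tautological generator $u$ of the maximal ideal in $\huaS$ maps to $[\underline{\pi}]$ whose image under $\varphi$ is the appropriate depth.
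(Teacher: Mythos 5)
The paper does not give a proof of Theorem \ref{Ozeki}: it simply delegates necessity to Proposition 5.9 of \cite{GLS14} and sufficiency to Theorem 21 of \cite{Oze14}. Your proposal is therefore attempting to reconstruct both of those results, which is a legitimate but substantially more ambitious goal, and the sketch as written has real gaps in exactly the places you flag as ``the main obstacle.''

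The clearest problem is the claim that ``every element of $\huaM$ sits in $\varphi(\huaS_{\OE})\cdot\mhat$, hence carries a factor of $u^p=\varphi(u)$.'' The embedding $\huaM\hookrightarrow\mhat$ is $m\mapsto 1\otimes_\varphi m$; the Frobenius twist appears in the \emph{tensor relation}, not as a scalar divisor. The element $1\otimes m$ is not divisible by $u^p$ in $\mhat$, so the factor $u^p$ in the statement is genuinely nontrivial. In \cite{GLS14} this is precisely where the work goes: one starts from the a priori containment $(\tau-1)(\huaM)\subseteq I_+W(R)\otimes_{\varphi,\huaS}\huaM$ (which has no power of $u$ at all) and then iterates the cocycle relation $A_\tau\tau(\varphi(A_\varphi))=\varphi(A_\varphi)\varphi(A_\tau)$ together with the height bound to push the containment down to $u^p\varphi(\huat)W(R)\otimes\huaM$. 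Your argument ``$\log(\tau)\equiv\tau-1$ modulo $I^{[2]}$, and $I^{[2]}W(R)=(\varphi(\huat))^2W(R)$ is already deep enough'' does not supply this: even granting that $(\tau-1)^2(\huaM)$ lands in $\varphi(\huat)^2W(R)\otimes\huaM$, that ideal is unrelated to $u^p\varphi(\huat)W(R)\otimes\huaM$ in general (neither contains the other), and the division by $k$ in the higher terms of $\log(\tau)$ also needs a $p$-adic estimate. Furthermore, $\lambda\in S^\times$ but $\lambda\notin W(R)^\times$, so $t$ and $\varphi(\huat)$ generate the same ideal of $S$ but not of $W(R)$, which undermines the proposed substitution in the step from $tB_{\cris}^+$ to $\varphi(\huat)W(R)$. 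The sufficiency direction has the same structural problem in reverse: you cannot simply read off $N=0$ from $(\tau-1)(\huaM)\subseteq u^p\varphi(\huat)W(R)\otimes\huaM$ without a more careful argument that these deeper terms stay inside $I_+$ after dividing by $t$ and reducing; this is what Ozeki's proof actually accomplishes. The conceptual anchor $N\sim\log(\tau)/t$ is the right one, but the delicate ideal bookkeeping you correctly anticipate as the obstacle is the entire content of the two cited theorems, and the sketch does not overcome it.
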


\begin{rem}\label{remarkOzeki}
By Theorem \ref{Ozeki} and Lemma \ref{pnot2}, when $p>2$, to give a crystalline $(\varphi, \Ghat)$-module $\mhat \in \Mod_{\huaS_{\mathcal O_E}}^{\varphi, \Ghat}$ is the same to give
\begin{enumerate}
\item  A matrix $A_{\varphi} \in \Mat(\huaS_{\mathcal O_E})$ as in Lemma \ref{pnot2}(1).
\item A matrix $A_{\tau} \in \Mat(\hat{R}_{\mathcal O_E})$ as in Lemma \ref{pnot2}(2), except that we furthermore require $A_{\tau}-Id \in \Mat(\Rhat_{\mathcal O_E} \cap (u^p\varphi(\mathfrak t)W(R)_{\mathcal O_E}))$.

\end{enumerate}
\end{rem}

\subsection{Rank 1 Kisin modules and $(\varphi, \Ghat)$-modules}

Now, we recall some useful facts about rank-$1$ Kisin modules and $(\varphi, \Ghat)$-modules with $\mathcal O_E$-coefficients. See \cite[\S 6]{GLS14} and \cite[\S 5.1]{GLS13} for more details. In this subsection, we have to assume that $K/\Qp$ is unramified and $p$ any prime number (except Lemma \ref{solution}, where we assume $p>2$).

\begin{defn}
Let $\boldt = (t_0, \ldots, t_{f-1})$ be a sequence of non-negative integers, $a \in k_{E}^{\times}$. Let $\barm(\boldt; a):=\barm(t_0, \ldots, t_{f-1}; a)= \prod_{s=0}^{f-1}\barm(\boldt; a)_s$ be the rank-$1$ module in $\Mod_{\huaS_{k_E}}^{\varphi}$ such that
\begin{itemize}
\item $\barm(\boldt; a)_s$ is generated by $e_s$, and
\item $\varphi(e_{s-1})=(a)_s u^{t_s}e_s$, where $(a)_s=a$ if $s=0$ and $(a)_s=1$ otherwise.
\end{itemize}
\end{defn}

\begin{defn} \label{huaMrank1}
Let $\boldt = (t_0, \ldots, t_{f-1})$ be a sequence of non-negative integers, $\hat a \in \mathcal O_E$. Let $\huaM(\boldt; \hat a):=\huaM(t_0, \ldots, t_{f-1}; \hat a)= \prod_{s=0}^{f-1}\huaM(\boldt; \hat a)_s$ be the rank-$1$ module in $\Mod_{\huaS_{\mathcal O_E}}^{\varphi}$ such that
\begin{itemize}
\item $\huaM(\boldt; \hat a)_s$ is generated by $\tilde e_s$, and
\item $\varphi(\tilde e_{s-1})=(\hat a)_s (u-\pi)^{t_s}\tilde e_s$, where $(\hat a)_s=\hat a$ if $s=0$ and $(\hat a)_s=1$ otherwise.
\end{itemize}

\end{defn}

\begin{lemma}[{\cite[Lem. 6.2, Lem. 6.3, Cor. 6.5]{GLS14}}]\label{rank1}
\hfill
  \begin{enumerate}
    \item Any rank $1$ module in $\Mod_{\huaS_{k_E}}^{\varphi}$ is of the form $\barm(\boldt; a)$ for some $\boldt$ and $a$.
    \item When $\hat a$ is a lift of $a$, $\huaM(\boldt; \hat a)/\omega_E\huaM(\boldt; \hat a) \simeq \barm(\boldt; a)$.
    \item There is a unique $\hat \huaM(\boldt; \hat a) \in \Mod_{\huaS_{\mathcal O_E}}^{\varphi, \Ghat}$ such that
    \begin{itemize}
      \item The ambient Kisin module of $\hat \huaM(\boldt; \hat a)$ is $\huaM(\boldt; \hat a)$, and
      \item $\That(\hat \huaM(\boldt; \hat a))$ is a crystalline character.
    \end{itemize}
    And in fact,
    $\That(\hat \huaM(\boldt; \hat a))= \lambda_{\hat a}\prod_{s=0}^{f-1}\psi_s^{t_s},$
    where $\psi_s$ is certain crystalline character such that $\HT_i(\psi_s)=1$ if $i=s$, $\HT_i(\psi_s)=0$ if $i \neq s$, and $\lambda_{\hat a}$ is the unramified character of $G_K$ which sends the arithmetic Frobenius to $\hat a$.
    \item There is a unique $\overline{\hatm}(\boldt; a) \in \Mod_{\huaS_{k_E}}^{\varphi, \Ghat}$ such that the ambient Kisin module is
    $\barm(\boldt; a)$. Furthermore, $\That(\overline{\hatM}(\boldt; a))$ is the reduction of $\That(\hat \huaM(\boldt; \hat a))$ for any lift $\hat a \in \mathcal O_E$ of $a$.
  \end{enumerate}
\end{lemma}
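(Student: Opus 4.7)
The plan is to treat the four parts in order, with parts (1)–(2) being essentially structural computations, part (3) being the substantive existence-and-uniqueness statement, and part (4) following from (3) by reduction mod $\omega_E$.

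For (1), I would use the decomposition $\huaS_{k_E} = \prod_{s=0}^{f-1} k_E\llb u\rrb$ coming from $k\otimes_{\Fp}k_E \simeq \prod_s k_E$ indexed by the embeddings $\kappa_s$; since $K/\Qp$ is unramified this is a clean product. A rank-$1$ module $\barm$ decomposes accordingly as $\prod_s \barm_s$, and choosing a generator $e_s$ of each factor, the Frobenius must send $e_{s-1}$ to $c_s u^{t_s} e_s$ with $c_s \in k_E^\times$ and $t_s\ge 0$ (here $u^{t_s}$ times a unit is forced by the height condition and the PID structure of $k_E\llb u\rrb$). A straightforward diagonal rescaling of the $e_s$'s absorbs all but one of the $c_s$ into the generators, leaving a single scalar $a:=\prod_s c_s \in k_E^\times$ attached at $s=0$. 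For (2), note that $K/\Qp$ unramified means $\pi\in pW(k)$, so $u-\pi \equiv u \pmod{p}$ in $\huaS$, and the reduction $\huaM(\boldt;\hat a)/\omega_E\huaM(\boldt;\hat a)$ immediately produces $\barm(\boldt;a)$ once $\hat a \mapsto a$ in $k_E$.

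For the existence half of (3), I would construct $\hat\huaM(\boldt;\hat a)$ by starting from the character $\chi := \lambda_{\hat a}\prod_{s=0}^{f-1}\psi_s^{t_s}$. This is a crystalline character with the prescribed labelled Hodge–Tate weights $t_s$, and by Theorem~\ref{hatMcoeff}(3) there is a unique $(\varphi,\Ghat)$-module $\hat\huaN\in\Mod_{\huaS_{\mathcal O_E}}^{\varphi,\Ghat}$ with $\That(\hat\huaN)\simeq \chi$. I then identify its underlying Kisin module: using that $\huaM(\boldt;\hat a)$ is cut out (up to isomorphism) by its Frobenius matrix $(\hat a)_s(u-\pi)^{t_s}$, and that rank-$1$ Kisin modules in $\Mod_{\huaS_{\mathcal O_E}}^{\varphi}$ are classified by a tuple of Hodge-type data plus an unramified twist (a direct rank-$1$ version of what (1) does, adapted to $\mathcal O_E$-coefficients and the $(u-\pi)^{t_s}$ normalization used for crystalline characters), one checks the underlying Kisin module of $\hat\huaN$ must be $\huaM(\boldt;\hat a)$.

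For uniqueness in (3), suppose $\hat\huaM_1,\hat\huaM_2$ both have underlying Kisin module $\huaM(\boldt;\hat a)$ and give crystalline $\That$. Using Lemma~\ref{pnot2}, each is determined by a matrix $A_\tau^{(i)}\in (\hat R_{\mathcal O_E})^\times$ satisfying $A_\tau^{(i)}\equiv 1 \bmod I_+\hat R_{\mathcal O_E}$ and the $\varphi$-compatibility with the common $A_\varphi$; crystallinity via Theorem~\ref{Ozeki} forces $A_\tau^{(i)}-1\in u^p\varphi(\huat)W(R)_{\mathcal O_E}$. In rank $1$ the $\varphi$-compatibility reads as an explicit difference equation for $A_\tau$, and together with the congruence and the cocycle condition $g(A_\tau)=\prod_{k=0}^{\varepsilon_p(g)-1}\tau^k(A_\tau)$ this forces $A_\tau^{(1)}=A_\tau^{(2)}$; equivalently one can argue via Galois characters, that two crystalline characters of $G_K$ with the same restriction to $G_\infty$ coincide, since their ratio would be a crystalline character trivial on $G_\infty$ with Hodge–Tate weights all zero, hence unramified by Sen, and unramified characters of $G_K$ trivial on $G_\infty$ are trivial. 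For (4), existence follows from reducing $\hat\huaM(\boldt;\hat a)$ mod $\omega_E$ and applying Theorem~\ref{hatMcoeff}(1), and the identification with the reduction of $\That(\hat\huaM(\boldt;\hat a))$ is immediate. Uniqueness in the torsion case is the same rank-$1$ rigidity argument on $A_\tau$ reduced mod $\omega_E$; alternatively, one shows any such $\barhatm$ admits a lift to some $\hat\huaM(\boldt;\hat a)$ and invokes uniqueness in (3).

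The main obstacle will be the identification step in (3)—checking that the underlying Kisin module of the crystalline character $\lambda_{\hat a}\prod_s\psi_s^{t_s}$ is exactly $\huaM(\boldt;\hat a)$ (rather than some twist), and similarly that there is no extra unramified ambiguity in the $A_\tau$ side. Both really come down to a careful rank-$1$ classification over $\huaS_{\mathcal O_E}$ together with computing the $G_\infty$-action on a generator of the lattice inside $\chi$; this is where the explicit shape $(u-\pi)^{t_s}$ (as opposed to $u^{t_s}$) enters, since $u-\pi$ is the element whose $\varphi$-image is a unit multiple of $E(u)$ and thus encodes the Hodge–Tate weight contribution of $\psi_s$ correctly.
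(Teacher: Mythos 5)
The paper itself proves this lemma by citation only (``See Lemma 6.2, Lemma 6.3, Corollary 6.5 of \cite{GLS14}''), so there is no in-paper argument to measure your route against; your outline is, however, essentially the argument one would expect to find in those references. Parts (1), (2), and (4) are fine: the decomposition $\huaS_{k_E}\simeq\prod_s k_E\llb u\rrb$, the rescaling that concentrates the units into a single $a\in k_E^\times$, the observation that $u-\pi\equiv u\ (\bmod\ \omega_E)$ since $K/\Qp$ is unramified, and the reduction-mod-$\omega_E$ passage from (3) to (4) are all correct.

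The real content is in (3), and you have correctly isolated but not discharged the key step. Constructing $\hat\huaN$ from the crystalline character $\chi=\lambda_{\hat a}\prod_s\psi_s^{t_s}$ via Theorem~\ref{hatMcoeff}(3) is only half the job; identifying the underlying Kisin module of $\hat\huaN$ with $\huaM(\boldt;\hat a)$ (and not with some other rank-$1$ Kisin module carrying the same $T_{\huaS}$ up to isomorphism but written with a different unit pattern) requires actually computing a generator of $T_{\huaS}(\huaM(\boldt;\hat a))=\Hom_{\huaS,\varphi}(\huaM(\boldt;\hat a),W(R))$ and matching its $G_\infty$-character with $\chi|_{G_\infty}$, or equivalently writing down the explicit $\tau$-action matrix on $\hat\huaM(\boldt;\hat a)$ in terms of $\varphi(\huat)$, verifying the $(\varphi,\Ghat)$-module axioms and the crystallinity criterion of Theorem~\ref{Ozeki}. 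Either route comes down to the same rank-$1$ calculation, which is the heart of the statement; as written your proof is a correct plan with that computation deferred rather than done. One small point of hygiene in your uniqueness argument: the reason the ratio $\chi_1\chi_2^{-1}$ has vanishing Hodge--Tate weights is not Sen theory per se but the fact that its Kisin module, having trivial $T_{\huaS}$, must itself be trivial by full faithfulness of $T_{\huaS}$, and a trivial rank-$1$ Kisin module has all $t_s=0$; then weak admissibility of the rank-$1$ filtered $\varphi$-module forces the Frobenius eigenvalue to be a unit, hence the character is unramified, and trivial on $G_\infty$ then forces it to be trivial since $K_\infty/K$ is totally ramified.
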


\begin{defn}
Let $\barn =\barm(\boldt ;a)$, for each $s$, define $\alpha_s(\barn): = \frac{1}{p^f-1} \sum_{j=1}^f p^{f-j}t_{j+s}$. Note that we have $\alpha_s(\barn)+t_s =p\alpha_{s-1}(\barn), \forall s$.
\end{defn}

\begin{lemma}[{\cite[Lem. 5.1.2]{GLS13}, \cite[Prop. 6.7]{GLS14}}] \label{comparerank1}
Let $\barhatn=\barhatm(\boldt; a)$, $\barhatn'=\barhatm(\boldt'; a')$, then
  \begin{enumerate}
  \item $\That(\barhatn) \mid_{I_K} \simeq \Pi_{s=0}^{f-1} w_s^{t_s}$, where $w_s: I_K \to \overline{\Fp}^{\times}$ is the fundamental character corresponding to $\overline \kappa_s : k \hookrightarrow \overline{\Fp}$ (the reduction of $\kappa_s$).
  \item The following are equivalent:
  \begin{enumerate}
    \item $\That(\barhatn)\simeq \That(\barhatn')$ as $G_K$-representations.
    \item $T_{\huaS}(\barn) \simeq T_{\huaS}(\barn')$ as $G_{\infty}$-representations.
    \item $\alpha_s(\barn)-\alpha_s(\barn') \in \mathbb Z$ for some $s$ (and thus all $s$), and $a=a'$.
    \item $\sum_{s=0}^{f-1}p^{f-1-s}t_s \equiv \sum_{s=0}^{f-1}p^{f-1-s}t_s' (\bmod p^f-1)$, and $a=a'$.
  \end{enumerate}
    \item There exists nonzero morphism $\barn \to \barn'$ if and only if $\alpha_s(\barn)-\alpha_s(\barn') \in \mathbb Z_{\geq 0}$ for all $s$, and $a=a'$.
     \end{enumerate}
\end{lemma}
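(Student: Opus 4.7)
The proof strategy is to reduce everything to direct explicit computations for rank-$1$ objects, using Lemma~\ref{rank1} as the starting point. For (1), Lemma~\ref{rank1}(3)--(4) gives $\That(\barhatn) \simeq \lambda_a \cdot \prod_s \overline{\psi_s}^{t_s}$ as a $G_K$-character, and restriction to $I_K$ kills the unramified twist $\lambda_a$. The identification $\overline{\psi_s}\vert_{I_K}\simeq w_s$ is standard: $\psi_s$ is a crystalline character with Hodge--Tate weights concentrated at the embedding $\kappa_s$, so by (a unramified-case of) Lubin--Tate theory, its mod-$p$ reduction on inertia is exactly the fundamental character $w_s$ of level $f$ corresponding to $\overline{\kappa_s}$.

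For (2), I would proceed cyclically. The equivalence (c)$\iff$(d) is a purely algebraic manipulation: unfolding
$(p^f-1)(\alpha_s-\alpha_s')=\sum_{j=1}^f p^{f-j}(t_{j+s}-t_{j+s}')$
shows that $\alpha_s-\alpha_s'\in \Z$ for one $s$ forces the same for all $s$, and after reindexing this is equivalent to (d). The implication (a)$\Rightarrow$(b) is trivial by restriction. For the remaining direction I would explicitly compute $T_\huaS(\barn)$ as a character of $G_\infty$: using $T_\huaS(\barn)=\Hom_{\huaS,\varphi}(\barn, W(R)\otimes_{\Zp}\Qp/\Zp)$, a generator is produced by fixing a compatible system of $(p^f-1)$-th roots of $\underline\pi$ in $R$ and setting $e_s\mapsto \beta\cdot [\underline\pi^{1/(p^f-1)}]^{N_s}$ where $N_s=(p^f-1)\alpha_s\in\Z$ and $\beta$ is a suitable element encoding $a$. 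This realizes $T_\huaS(\barn)$ as the $G_\infty$-restriction of $\lambda_a\cdot \prod_s w_s^{t_s}$ (viewed as a $G_K$-character via part~(1)), whose $G_\infty$-isomorphism class is determined precisely by the pair $(a,\sum_s p^{f-1-s}t_s\bmod p^f-1)$; this gives (b)$\iff$(d), and combined with (1) yields (d)$\iff$(a).

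For (3), a direct linear-algebra computation on morphisms suffices. Any nonzero morphism $\barn\to\barn'$ is determined by $c_s\in k_E\llb u\rrb$ with $e_s\mapsto c_s e_s'$, subject to the $\varphi$-compatibility $c_s=(a'/a)_s u^{t_s'-t_s}\varphi(c_{s-1})$. Writing $c_s=u^{m_s}\tilde c_s$ with $\tilde c_s\in k_E\llb u\rrb^\times$ (the only other possibility is $c_s\equiv 0$ for all $s$, giving the zero map), the valuation recursion $m_s=pm_{s-1}+(t_s'-t_s)$ iterated $f$ times yields $(p^f-1)m_s=\sum_j p^j(t_{s-j}-t_{s-j}')$, so $m_s=\alpha_s(\barn)-\alpha_s(\barn')$; the unit recursion $\tilde c_s=(a'/a)_s\varphi(\tilde c_{s-1})$, evaluated at constant terms and multiplied around the cycle, forces $a=a'$. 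A nonzero morphism exists iff all $m_s\geq 0$, which is exactly the stated criterion.

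The main technical subtlety will lie in (b)$\Rightarrow$(a): $G_\infty$-restriction is \emph{not} injective on isomorphism classes of rank-$1$ Kisin modules (different $\boldt$'s can yield non-isomorphic Kisin modules whose $T_\huaS$'s are nevertheless isomorphic as $G_\infty$-representations), so one must verify carefully that the $G_\infty$-character already determines the invariants $(a,\sum p^{f-1-s}t_s\bmod p^f-1)$ governing $\That(\barhatn)$. The explicit generator $\beta\cdot[\underline\pi^{1/(p^f-1)}]^{N_s}$ in $W(R)\otimes\Qp/\Zp$ achieves this: the $G_\infty$-action on such Teichm\"uller-type elements recovers the tame part $\prod_s w_s^{t_s}\vert_{G_\infty}$ via its action on the chosen $(p^f-1)$-th roots of $\underline\pi$, while $\beta$ recovers the unramified twist $\lambda_a$.
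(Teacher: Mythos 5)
Your proposal is correct, and it matches the approach the paper relies on: the paper itself gives no argument here but cites Lemma 5.1.2 of GLS13 and Proposition 6.7 of GLS14, and those proofs proceed exactly as you do — by explicitly computing $T_\huaS$ of rank-$1$ torsion Kisin modules via a valuation/unit analysis in $R$ and matching the resulting tame and unramified parts. Your identification $m_s=\alpha_s(\barn)-\alpha_s(\barn')$ in part (3), the derivation of $a=a'$ from the unit recursion, and the chain (a)$\Rightarrow$(b)$\iff$(d)$\iff$(c) together with (d)$\Rightarrow$(a) via part (1) are all sound; you also correctly flag that (b)$\Rightarrow$(a) needs the observation that the restriction of $k_E^\times$-valued $G_K$-characters to $G_\infty$ is faithful (which holds since $K/\Qp$ is unramified and $p>2$, so $K_\infty/K$ has no nontrivial abelian subextension).
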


We recall the following useful lemma.
\begin{lemma}[{\cite[Lem. 7.1]{GLS14}}] \label{solution}
Let $p>2$, $t_0, \ldots, t_{f-1} \in [-p, p]$ such that $\sum_{s=0}^{f-1}p^{f-1-s}t_s \equiv 0 (\bmod p^f-1)$. Then one of the following holds:
\begin{enumerate}
 \item $(t_0, \ldots, t_{f-1})= \pm (p-1, \ldots, p-1)$,
 \item $t_0, \ldots, t_{f-1}$ considered as a cyclic list, can be broken up into strings of the form $\pm(-1, p-1, \ldots, p-1, p)$ (where there might be no occurrence of $p-1$) and strings of the form $(0, \ldots, 0)$.
\end{enumerate}
\end{lemma}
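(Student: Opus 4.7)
The plan is to repackage the condition via the rational numbers $\alpha_s := \frac{1}{p^f-1}\sum_{j=1}^{f} p^{f-j} t_{j+s}$ already introduced for rank-$1$ objects just before Lemma~\ref{comparerank1}, and to turn the problem into a finite automaton on the state set $\{-1, 0, 1\}$.

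The first observation is that every $\alpha_s$ is an integer. The hypothesis is exactly $\alpha_{f-1} \in \mathbb{Z}$. Together with the recursion $p\alpha_{s-1} = \alpha_s + t_s$ (recorded right after the definition of $\alpha_s$), the integrality of $(p^f-1)\alpha_s$ by construction, and $\gcd(p, p^f - 1) = 1$, B\'ezout applied to the recursion propagates integrality backwards from $\alpha_{f-1}$ to every $\alpha_s$. From $|t_s| \leq p$ one then gets $|\alpha_s| \leq p/(p-1) < 2$ (this is where $p > 2$ is used), so in fact $\alpha_s \in \{-1, 0, 1\}$ for every $s$.

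Next I would read off the allowed transitions from $t_s = p\alpha_{s-1} - \alpha_s \in [-p, p]$: the pairs $(\alpha_{s-1}, \alpha_s) = (\pm 1, \mp 1)$ are forbidden (they would force $|t_s| = p+1$), and the remaining seven pairs produce $t_s \in \{-p, -(p-1), -1, 0, 1, p-1, p\}$. In particular $+1$-states and $-1$-states can never be adjacent in the cyclic $\alpha$-sequence.

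The conclusion follows by a short case split on the cyclic $\alpha$-sequence. If $\alpha$ is constantly $+1$ (resp.\ $-1$) then $t_s = p-1$ (resp.\ $-(p-1)$) for all $s$, giving conclusion (1). Otherwise the cyclic sequence contains at least one $0$, so each maximal $+1$-block (resp.\ $-1$-block) is flanked on both sides by $0$'s; such a $+1$-block of length $\ell$ contributes, via the enter/traverse/leave transitions, the $\ell+1$ consecutive $t$-values $(-1, p-1, \ldots, p-1, p)$, a $-1$-block contributes the negated string, and each $0$-block of length $m$ contributes a string of $m-1$ zeros via its interior $(0,0)$-transitions. Since nonzero and zero $\alpha$-blocks alternate cyclically, these strings partition $\{0, \ldots, f-1\}$ (with the degenerate case $\alpha \equiv 0$ yielding a single zero string of length $f$), giving conclusion (2). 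The only delicate point in the whole argument is the position-bookkeeping in this last step; I do not expect a genuine obstacle.
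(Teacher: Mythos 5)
The paper does not prove this lemma; it is stated as a direct citation of Lemma~7.1 of \cite{GLS14}, with no \verb|proof| environment. So there is no in-paper argument to compare against, and your proposal must be judged on its own.

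Your argument is correct. The reduction of the recursion $p\alpha_{s-1}=\alpha_s+t_s$ to the identity is exactly as asserted in the paper right after the definition of $\alpha_s$, the hypothesis is literally $\alpha_{f-1}\in\mathbb Z$, and your bound $|\alpha_s|\leq p/(p-1)<2$ for $p>2$ is correct, so $\alpha_s\in\{-1,0,1\}$ once integrality is established. The transition table $t_s=p\alpha_{s-1}-\alpha_s$ with $(\pm1,\mp1)$ forbidden is right, and the block accounting works out: a nonzero block of length $\ell$ owns the $\ell+1$ edges $(-1,p-1,\ldots,p-1,p)$ (with the sign of the block), a zero block of length $m$ owns $m-1$ interior zero edges, and since nonzero and zero blocks alternate cyclically in equal number, the edge counts sum to $f$ with no overlap. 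The two degenerate cases ($\alpha$ constantly $\pm1$, giving conclusion (1), and $\alpha\equiv 0$, giving a single zero string of length $f$) are handled. This is a complete proof.

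Two small remarks. First, B\'ezout is not needed to propagate integrality: the recursion read forwards gives $\alpha_s=p\alpha_{s-1}-t_s$, so starting from $\alpha_{f-1}\in\mathbb Z$ one gets $\alpha_0=p\alpha_{f-1}-t_0\in\mathbb Z$ and then $\alpha_1,\alpha_2,\ldots$ in turn; this avoids invoking $\gcd(p,p^f-1)=1$ altogether. Second, your parenthetical about $p>2$ is genuinely load-bearing: for $p=2$ the bound degrades to $|\alpha_s|\leq 2$, and e.g.\ $f=1$, $t_0=2$ then satisfies the hypothesis vacuously but is excluded from both conclusions, so the lemma really does fail at $p=2$ and you have located exactly where the assumption enters. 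The framing via $\alpha_s$ is the natural one given that these quantities are introduced in the paper just before Lemma~\ref{comparerank1}, and it almost certainly coincides with the route taken in \cite{GLS14} itself.
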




\section{Shapes of upper triangular Kisin modules with $k_E$-coefficient-I}

In this section, we study the shape of Kisin modules with $k_E$-coefficient coming from reductions of crystalline representations. We will often use the notations listed below.

\noindent (\textnormal{\textbf{CRYS.}}) Let $p>2$ be an odd  prime, $K/\Qp$ a finite unramified extension.
\begin{itemize}
  \item Suppose $V$ is a crystalline representation of $E$-dimension $d$, such that the labelled Hodge-Tate weights are $\HT_{\kappa_s}(D)=\HT(D_s)=\boldr_s = \{0 = r_{s, 1} < \ldots < r_{s, d} \leq p\}$.
  \item Let $\rho=T$ be a $G_K$-stable $\mathcal O_E$-lattice in $V$, and $\hat \huaM \in \Mod_{\huaS_{\mathcal O_E}}^{\varphi, \Ghat}$ the $(\varphi, \Ghat)$-module attached to $T$. Let $\barrho: =T/\omega_ET$ be the reduction.
   \item Let $\hat \huaM = \Pi_{s=0}^{f-1} \hat{\huaM}_s$ be the decomposition, where $\hat{\huaM}_s=\varepsilon_s \hat\huaM$. And similarly for the ambient Kisin module $\huaM =\Pi_{s=0}^{f-1} \huaM_s$.
  \item Denote $\barhatm$ the reduction modulo $\omega_E$ of $\hat\huaM$, so it decomposes as $\barhatm =\Pi_{s=0}^{f-1} \barhatm_s$. And similarly for the ambient Kisin module $\barm =\Pi_{s=0}^{f-1} \barm_s$.
\end{itemize}

\begin{thm}[{\cite[Thm. 4.22]{GLS14}}] \label{GLS}
With notations from (\textnormal{\textbf{CRYS}}). There exists an $\mathcal O_E[\![u]\!]$-basis $\{e_{s, i}\}_{0 \leq s \leq f-1, 1\leq i \leq d}$ of $\huaM$ such that
\begin{itemize}
\item $\bolde_{s}=(e_{s, 1}, \ldots, e_{s, d})$ is an $\mathcal O_E[\![u]\!]$-basis of $\huaM_s$ for each $s$.
\item We have $\varphi(\bolde_{s-1})=\bolde_{s} X_s \Lambda_s Y_s$ where $X_s, Y_s \in \GL_d(\mathcal O_E[\![u]\!])$ ,$Y_s \equiv Id (\bmod \omega_E)$, and $\Lambda_s  =[E(u)^{r_{s, 1}}, \ldots, E(u)^{r_{s, d}}]$.
\end{itemize}
\end{thm}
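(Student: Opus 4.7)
The plan is to exploit Kisin's reconstruction of the filtered $\varphi$-module $D = \prod_s D_s$ attached to $V$ from the Kisin module $\huaM = \prod_s \huaM_s$. One has a canonical identification
\[
D_s \cong (\varphi^*\huaM_{s-1})/E(u)(\varphi^*\huaM_{s-1}) \otimes_{\OE} E,
\]
with Hodge filtration $\Fil^i D_s$ equal to the image of those $x \in \varphi^*\huaM_{s-1}$ satisfying $\varphi(x) \in E(u)^i \huaM_s$. Since $V$ is crystalline with Hodge--Tate weights $\boldr_s$, the filtration jumps occur precisely at $r_{s,1}, \ldots, r_{s,d}$, each with one-dimensional graded piece. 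This structural input is what encodes $\Lambda_s$ on the Kisin-module side, and it is the only place where the crystalline hypothesis is needed.

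For each $s$, I would choose a basis $d_{s,1}, \ldots, d_{s,d}$ of $D_s$ adapted to the filtration, so that $d_{s,i}$ projects to a generator of $\Fil^{r_{s,i}} D_s / \Fil^{r_{s,i}+1} D_s$. Lifting integrally, pick $\tilde d_{s,i} \in \varphi^*\huaM_{s-1}$ with $\varphi(\tilde d_{s,i}) \in E(u)^{r_{s,i}} \huaM_s$, and write $\varphi(\tilde d_{s,i}) = E(u)^{r_{s,i}} m_{s,i}$ for some $m_{s,i} \in \huaM_s$. A Nakayama-type check (showing that the $m_{s,i}$ reduce to a basis of $\huaM_s / (\omega_E, u) \huaM_s$) should confirm that $\{m_{s,i}\}_i$ is an $\OE[\![u]\!]$-basis of $\huaM_s$; I take this to be $\bolde_s$. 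If $N_s \in \GL_d(\OE[\![u]\!])$ is the matrix expressing the $\tilde d_{s,i}$ in terms of the natural basis $1 \otimes \bolde_{s-1}$ of $\varphi^*\huaM_{s-1}$, a direct calculation gives $A_s N_s = \Lambda_s$, so $A_s = \Lambda_s N_s^{-1}$. This already matches the desired shape $X_s \Lambda_s Y_s$ with $X_s = \Id$ and $Y_s = N_s^{-1}$.

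The remaining content is to arrange $Y_s \equiv \Id \pmod{\omega_E}$, equivalently $\tilde d_{s,i} \equiv 1 \otimes e_{s-1, i} \pmod{\omega_E}$. This forces the divisibility $\bar\varphi(\bar e_{s-1, i}) \in E(u)^{r_{s,i}} \barm_s$ on the reduced Frobenius. I would therefore first pick a mod-$\omega_E$ basis of $\barm$ in which the reduced Frobenius matrix takes the form $\bar X_s \Lambda_s$; this amounts to Smith normal form over the DVR $k_E[\![u]\!]$ (giving $\bar X_s \Lambda_s \bar Y_s$) together with an absorption of the right factor $\bar Y_s$ into a change of source basis via the image of $\varphi$. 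Once this mod-$\omega_E$ shape is secured, I lift the basis to characteristic zero and run the construction of the previous paragraph. The principal obstacles are two cyclic constraints: the dependence of $\bolde_s$ on $\bolde_{s-1}$ as $s$ runs around $\Z/f\Z$, and the absorption step in the reduction. I expect to handle them by an iterative construction convergent in the $\omega_E$-adic topology, or by setting up a global linear system in $s$.
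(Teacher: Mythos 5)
The paper does not prove this statement: it is a direct citation of \cite{GLS14}, Theorem 4.22, whose proof occupies much of Section 4 of that reference. Your proposal is therefore a from-scratch attempt, and it passes over exactly the step where all the work lies.

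The substance of the theorem is $Y_s \equiv \mathrm{Id} \pmod{\omega_E}$: after absorbing $X_s$ into $\bolde_s$, it says that $\bar A_s$ lies in $\GL_d(k_E[\![u]\!]) \cdot \Lambda_s$, equivalently that $\bar A_s$ has elementary divisors $u^{r_{s,1}}, \ldots, u^{r_{s,d}}$ over the DVR $k_E[\![u]\!]$. Neither your Nakayama step nor your Smith normal form step establishes this, and neither can, because the assertion is not formal: elementary divisors can jump under reduction. Take the height-$2$ matrix $A = \bigl(\begin{smallmatrix} E(u) & p \\ 0 & E(u) \end{smallmatrix}\bigr)$. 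It satisfies $AB = E(u)^2\,\mathrm{Id}$ for a suitable $B$, has elementary divisors $(1, E(u)^2)$ over $\mathcal{O}_E[\![u]\!]_{(E(u))}$ (so the ``weights'' are $\{0,2\}\subset[0,p]$ and the filtration on $D_s$ has exactly the shape you describe), yet its reduction $u\cdot\mathrm{Id}$ has elementary divisors $(u,u)$; and if you run your construction literally on this module you obtain $m_{s,1} = pf_1 + E(u)f_2 \in (\omega_E, u)\huaM_s$, so the $m_{s,i}$ fail to reduce to a basis of $\huaM_s/(\omega_E,u)\huaM_s$ and the Nakayama check collapses. The assertion that this degeneration cannot happen when $\huaM$ actually comes from a \emph{crystalline} representation with labelled weights in $[0,p]$ is the genuine content of \cite{GLS14}, Theorem 4.22, and its proof requires delicate integral estimates (in particular the bound $r_{s,d}\leq p$ and the full $\Acris$/strong-divisibility analysis) that are completely invisible from the rational filtration on $D_s$ you work with. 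Your fallback ``absorption of $\bar Y_s$ via the image of $\varphi$'' is also obstructed: a source base change by $Z_{s-1}$ acts on $\bar A_s$ on the right through $\varphi(Z_{s-1})$, whose entries lie in $k_E[\![u^p]\!]$ and hence cannot cancel an arbitrary $\bar Y_s$, while the same $Z_{s-1}$ simultaneously enters $\bar A_{s-1}$ inverted on the other side, so the normalizations cannot be carried out independently for each $s$.
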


\begin{prop}  \label{extKisin}
Let $\barn_i = \barm(\boldt_i; a_i)=\barm(t_{i, 0}, \ldots, t_{i, f-1}; a_i)$ for $1 \leq i \leq d$.
Suppose $\barm \in \Mod_{\huaS_{k_E}}^{\varphi}$ such that $\barm \in \mathcal E(\barn_d, \ldots, \barn_1)$ is an upper triangular extension.
Then there exists basis $\bolde_s=(e_{s, 1}, \ldots, e_{s, d})$ of $\barm_s$, such that
$$\varphi(\bolde_{s-1})=(\bolde_s)A_s=(\bolde_s)
\left(
 \begin{array}{ccccc}
   (a_1)_s u^{t_{1, s}}  & &  x_{s, i, j} \\
    & \ddots  &  \\
    &  & (a_d)_s u^{t_{d, s}}
 \end{array}
\right),
$$
where $A_s$ is an upper triangular matrix such that:
\begin{enumerate}
  \item The diagonal entries in the matrix are $(a_i)_s u^{t_{i, s}}$, $\forall s, \forall i$. The entries on the upper right $x_{s, i, j}$ are polynomials in $k_E[u]$, $\forall s, \forall 1 \leq i <j \leq d$.
  \item For $1 \leq i <j \leq d$, if there does \emph{not} exist nonzero morphism $\barn_j \to \barn_i$, then $\deg (x_{s, i, j}) < t_{j, s}, \forall s$.
  \item  For $1 \leq i <j \leq d$, if there exists nonzero morphism $\barn_j \to \barn_i$, then for any one choice of $s_0$,
   \begin{itemize}[leftmargin=*]
     \item we can make $x_{s_0, i, j}$ into the form $x_{s_0, i, j}  = x_{s_0, i, j}' + a_{s_0, i, j} u^{t_{j, s_0} + \alpha_{s_0}(\barn_j)-\alpha_{s_0}(\barn_i)}$, where $\deg (x_{s_0, i, j}') < t_{j, s_0}$ and $a_{s_0, i, j} \in k_E$.
     \item For all $s \neq s_0$, we still have $\deg (x_{s, i, j}) < t_{j, s}$.
   \end{itemize}
\end{enumerate}
\end{prop}

\begin{proof}
This is easy generalization of \cite[Prop. 7.4]{GLS14}, by induction on $d$.
\end{proof}

\begin{prop} \label{diagonal}
With notations from (\textnormal{\textbf{CRYS}}). Suppose that $\overline \rho$ is upper triangular. Then $\barm$ is upper triangular, i.e., $\barm \in \mathcal E(\barn_d, \ldots, \barn_1)$ where $\barn_i =\barm(t_{i, 0}, \ldots, t_{i, f-1}, a_i)$ are some rank-$1$ Kisin modules with $k_E$-coefficient. Furthermore, for any $s$, we have $\{t_{1, s}, \ldots, t_{d, s}\}=\{r_{s, 1}, \ldots, r_{s, d}\}$ as sets.
\end{prop}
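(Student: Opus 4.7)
The plan is to prove the proposition in two steps: first establish the upper-triangular structure of $\barm$, and then match the multisets of exponents.

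For the first step, I would exploit that the functor $T_{\huaS} \colon \Mod_{\huaS_{k_E}}^{\varphi} \to \Rep_{k_E}(G_{\infty})$ is exact and fully faithful (Theorem~\ref{huaMcoeff}(2)). The $G_K$-filtration of $\barrho$ restricts to a $G_\infty$-filtration whose rank-$1$ subquotients are characters; via Fontaine's equivalence between \'etale $\varphi$-modules and $G_\infty$-representations, this corresponds to a filtration of $\barm[1/u]$ by sub-\'etale $\varphi$-modules, and intersecting with $\barm$ (which, being free over $\huaS_{k_E} \cong \prod_s k_E\llb u\rrb$, is saturated in the \'etale module) produces a filtration of $\barm$ by sub-Kisin modules $\barm^{(i)}$. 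Each rank-$1$ subquotient is a free $\huaS_{k_E}$-module, and by Lemma~\ref{rank1}(1) takes the form $\barn_i = \barm(\boldt_i; a_i)$, establishing $\barm \in \mathcal{E}(\barn_d, \ldots, \barn_1)$.

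For the second step, I would extract structural information by juxtaposing two descriptions of the Frobenius on $\barm_s$. From Theorem~\ref{GLS} (after reducing modulo $\omega_E$ and using $E(u) \equiv u \pmod{p}$ since $K$ is unramified, together with $Y_s \equiv \Id \pmod{\omega_E}$), the Frobenius matrix on $\barm_s$ has the form $\bar{X}_s \cdot \diag(u^{r_{s,1}}, \ldots, u^{r_{s,d}})$ for some $\bar{X}_s \in \GL_d(k_E\llb u\rrb)$. On the other hand, from Proposition~\ref{extKisin} applied to the filtration of the first step, the Frobenius on $\barm_s$ in an appropriate basis has upper triangular matrix $A'_s$ with diagonal entries $(a_i)_s u^{t_{i,s}}$. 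The two bases are related by an invertible change-of-basis matrix $\bar{P}_s \in \GL_d(k_E\llb u\rrb)$, so both matrices are equivalent over this PID and share the same Smith normal form, namely $\diag(u^{r_{s,1}}, \ldots, u^{r_{s,d}})$. In particular, comparing determinants yields $\sum_i t_{i,s} = \sum_i r_{s,i}$, and comparing the principal $k \times k$ minors of the upper triangular matrix (themselves upper triangular with diagonal $u^{t_{i,s}}$ for $i$ in the chosen $k$-subset) against the elementary divisor $u^{r_{s,1}+\cdots+r_{s,k}}$ gives the weak majorization $\sum_{l=1}^{k} r_{s,l} \le \sum_{l=1}^{k} t_{(l),s}$, where $t_{(1),s} \le \cdots \le t_{(d),s}$ denotes the sorted sequence.

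To upgrade this to the multiset equality, I would argue inductively on $d$. The base case $d=1$ is immediate. For the inductive step, the key additional input is that the $j$-th column of $\bar{X}_s \Lambda_s$ lies in $u^{r_{s,j}} k_E\llb u\rrb^d$; pushing this through the change-of-basis identity $\bar{P}_s A'_s = \bar{X}_s \Lambda_s \varphi(\bar{P}_{s-1})$ and analyzing congruences modulo $u^{r_{s,d}}$ (where the $d$-th column of $\bar{X}_s \Lambda_s$ vanishes) forces at least one $t_{i,s}$ to equal $r_{s,d}$, since the invertibility of the constant terms of $\bar{P}_s$ and $\bar{P}_{s-1}$ precludes a purely off-diagonal cancellation, as a direct check in the $d=2$ case illustrates. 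Peeling off the corresponding rank-$1$ subquotient and iterating---with the quotient Kisin module inheriting a Theorem~\ref{GLS}-style description with one weight removed---yields the full multiset equality. The main obstacle will be maintaining the compatibility of the Theorem~\ref{GLS} and Proposition~\ref{extKisin} matrix structures through the induction, since $\bar{P}_s$ and $\varphi(\bar{P}_{s-1})$ can mix the upper-triangular form with the column-valuation structure of $\bar{X}_s \Lambda_s$ in subtle ways.
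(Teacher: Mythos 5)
Your first step, establishing that $\barm$ is upper triangular, takes a different route from the paper (which simply cites Lemma~4.4 of Ozeki's work on torsion $(\varphi, \Ghat)$-modules), but the Fontaine-equivalence-plus-saturation argument is a standard alternative and should go through. The real issue is in the second step. You correctly observe that the two Frobenius matrices $A'_s$ and $\bar X_s \Lambda_s$ are $\GL_d(k_E\llb u\rrb) \times \GL_d(k_E\llb u\rrb)$-equivalent and hence share the Smith normal form $\Lambda_s$, and you correctly note this only yields a majorization inequality, not the multiset equality. But the inductive peeling-off argument you sketch to close that gap does not work as stated: without further structural input, majorization plus equal determinant genuinely does not force multiset equality (e.g.\ $A'_s = \left(\begin{smallmatrix} u & 1 \\ 0 & u\end{smallmatrix}\right)$ has diagonal exponents $(1,1)$ but Smith normal form $\operatorname{diag}(1, u^2)$, giving exponents $(0,2)$). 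Appealing to "invertibility of constant terms" of $\bar P_s$, $\bar P_{s-1}$ is too weak to rule this out.

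What your proposal never exploits in a precise way are the two facts that make the paper's argument work and are exactly what prevent the counterexample above: (i) the right-hand change-of-basis matrix is $\varphi(T_{s-1})$, whose entries lie in $k_E\llb u^p\rrb$ — so its non-unit entries are divisible by $u^p$ — and (ii) the Hodge--Tate weights satisfy $0 \le r_{s,i} \le p$. The paper writes $A_s = (T_s X_s)\, \Lambda_s\, \varphi(T_{s-1}^{-1})$ and then argues columnwise: after a sequence of upper-triangular column operations (which preserve $\GL_d(k_E\llb u^p\rrb)$) one can assume that for each $i$ the topmost unit entry $a_{k_i, i}$ of $\varphi(T_{s-1}^{-1})$ has zeros to its right; then the divisibility $u^{r_{k_i}} \mid \col_i(\Lambda_s \cdot \varphi(T_{s-1}^{-1}))$ follows because entries above row $k_i$ are non-units in $k_E\llb u^p\rrb$ and hence divisible by $u^p \geq u^{r_{k_i}}$. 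This gives $t_i \ge r_{k_i}$ termwise, and a determinant comparison then forces $t_i = r_{k_i}$. Without the $u^p$-integrality of $\varphi(T_{s-1})$ and the bound $r_i \leq p$, the statement is simply false, so any correct proof must use them somewhere, and yours currently does not.
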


\begin{proof}
$\barm$ is upper triangular by \cite[Lem. 4.4]{Oze13}.
By Theorem \ref{GLS}, there exists basis $\bolde_s$ of $\barm_s$ such that $\varphi(\bolde_{s-1})=\bolde_{s} X_s[u^{r_{s, 1}}, \ldots, u^{r_{s, d}}]$, where $r_{s, 1} < \cdots < r_{s, d}$.

Since $\barm \in \mathcal E(\barn_d, \ldots, \barn_1)$, by Proposition \ref{extKisin}, there exists another basis $\boldf_{s}=(f_{s, i})$ of $\barm_s$ such that $\varphi(\boldf_{s-1})=\boldf_{s}A_s$ where $A_s$ is upper triangular with diagonal elements being
$(a_1)_s u^{t_{1, s}}, \ldots, (a_d)_s u^{t_{d, s}}$.

Suppose $\bolde_s=\boldf_sT_s$ for all $s$, then we will have
$$A_s= T_{s}X_s [u^{r_{s, 1}}, \ldots, u^{r_{s, d}}] \varphi(T_{s-1}^{-1}).$$
Then we can conclude by applying the following lemma, where we let $M=A_s, B=T_sX_s, D=[u^{r_{s, 1}}, \ldots, u^{r_{s, d}}]$ and $A=\varphi(T_{s-1}^{-1})$.
\end{proof}

\begin{lemma}
If we have $M=BDA$, where
\begin{itemize}
  \item $M \in \Mat(k_E\llb u \rrb)$ which is upper triangular with diagonal elements being $c_1u^{t_1}, \ldots, c_du^{t_d}$, where $c_i \in k_E[\![u]\!]^{\times}, \forall i$;
  \item $B \in \GL_d(k_E[\![u]\!])$, $D=[u^{r_{1}}, \ldots, u^{r_{d}}]$ with $0 \leq r_1 \leq \ldots \leq r_d \leq p$, and $A \in \GL_d(k_E\llb u^p \rrb)$ (note here that we do not need $r_i$ to be distinct);
\end{itemize}
then $\{t_1, \ldots, t_d\}=\{r_1, \ldots, r_d\}$ as sets.
\end{lemma}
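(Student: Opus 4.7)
I would prove this by induction on $d$. The base case $d=1$ is immediate: $c_1 u^{t_1} = B \cdot u^{r_1} \cdot A$ with $B, A$ units forces $t_1 = r_1$.

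For the inductive step, the key input is extracted from the first column. Since $Me_1 = c_1 u^{t_1} e_1$, applying $B^{-1}$ gives $DAe_1 = c_1 u^{t_1} B^{-1} e_1$; comparing $k$-th components yields
\[
A_{k1} = c_1\, u^{t_1 - r_k}\, (B^{-1})_{k1}, \qquad k = 1, \ldots, d.
\]
Now $A_{k1} \in k_E\llb u^p \rrb$, so its nonzero terms appear only in degrees divisible by $p$, while $(B^{-1})_{k1} \in k_E\llb u\rrb$ has arbitrary nonnegative degrees. Combined with $t_1, r_k \in [0, p]$, a direct degree analysis yields: (a) $A_{k1}(0) \ne 0$ forces $t_1 \le r_k$, and in particular $t_1 = p$ forces $r_k = p$; (b) $(B^{-1})_{k1}(0) \ne 0$ forces $t_1 - r_k \in p\Z_{\ge 0}$, i.e.\ $t_1 = r_k$ or the corner case $(t_1, r_k) = (p, 0)$. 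Since $A(0)$ and $B^{-1}(0)$ are both invertible over $k_E$, their first columns are nonzero, so combining (a) and (b) we conclude $t_1 \in \{r_1, \ldots, r_d\}$ (the corner case being resolved by (a) which forces some $r_k = p$). Write $t_1 = r_{k^*}$.

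To complete the induction, we permute columns of $B$ and rows of $A$ (by a permutation matrix, which lies in both $\GL_d(k_E\llb u\rrb)$ and $\GL_d(k_E\llb u^p\rrb)$) so that we may assume $k^* = 1$, hence $r_1 = t_1$. The subspace $k_E\llb u \rrb \cdot e_1$ is $M$-stable since column $1$ of $M$ equals $c_1 u^{t_1} e_1$, and passing to the quotient yields an upper triangular $(d-1)\times(d-1)$ matrix $\bar M$ with diagonal $(c_i u^{t_i})_{i \ge 2}$. Using the leading-term information on $B^{-1}$ and $A$ derived above (which forces the first column of $B(0)$ to have only its $(1,1)$ entry nonzero, so that $B$ can be put in block upper triangular form relative to the splitting $k_E\llb u\rrb^d = k_E\llb u\rrb \cdot e_1 \oplus k_E\llb u\rrb^{d-1}$), the factorization descends to $\bar M = \bar B\, \bar D\, \bar A$ with $\bar D = \diag(u^{r_2}, \ldots, u^{r_d})$, $\bar B \in \GL_{d-1}(k_E\llb u\rrb)$, and $\bar A \in \GL_{d-1}(k_E\llb u^p\rrb)$. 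The induction hypothesis then gives $\{t_i\}_{i \ge 2} = \{r_i\}_{i \ge 2}$, and combined with $t_1 = r_1$ this yields $\{t_1, \ldots, t_d\} = \{r_1, \ldots, r_d\}$.

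The principal obstacle is the descent of the factorization to $\bar M$: one must carefully manipulate $B$ and $A$ using their leading-order structure from the first-column analysis so that the reduced $\bar A$ remains in $\GL_{d-1}(k_E\llb u^p\rrb)$, with the corner case $(t_1, r_{k^*}) = (p, 0)$ requiring a separate bookkeeping argument to ensure the multiset equality survives the reduction.
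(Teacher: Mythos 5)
Your first-column analysis correctly establishes $t_1\in\{r_1,\ldots,r_d\}$, and the overall inductive strategy can in fact be made to work, but it is a genuinely different route from the paper's, and the descent step --- which you yourself flag as ``the principal obstacle'' --- is a real gap rather than routine bookkeeping. The parenthetical claim that the leading-term information ``forces the first column of $B(0)$ to have only its $(1,1)$ entry nonzero'' is not correct as written. After permuting so that $r_1=t_1$, one reads off from $c_1 u^{t_1}e_1=\sum_k A_{k1}u^{r_k}\col_k(B)$ that $\col_1(B) = A_{11}^{-1}\bigl(c_1 e_1 - \sum_{k>1}A_{k1}u^{r_k - r_1}\col_k(B)\bigr)$; at $u=0$ the terms with $r_k = r_1$ (the lemma permits repeated $r_i$), and the corner terms with $r_1 = p$, $r_k = 0$, $v(A_{k1}) = p$, do \emph{not} vanish, so $\col_1(B)(0)$ can have nonzero entries below the first. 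What is actually needed is to first clear $\col_1(A)$ below the top via a row operation $A\mapsto U^{-1}A$ with $U = Id + \sum_{k>1}(A_{k1}/A_{11})E_{k1}$ --- which requires choosing $k^*$ so that $A_{k^*1}$ is a unit, not merely any $k^*$ with $r_{k^*}=t_1$ --- compensating $B\mapsto B\cdot DUD^{-1}$, and verifying $DUD^{-1}\in\GL_d(k_E\llb u\rrb)$ using precisely the valuation bounds from your (a), (b). Only after this modification does the first column of the new $B$ become $(c_1 A_{11}^{-1},0,\ldots,0)^T$ and the factorization descend to the quotient. As written your argument is circular: it cites the block structure of $B$ to justify the manipulations that are needed to produce that block structure.

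The paper's proof avoids induction and the descent difficulty entirely. It right-multiplies $M=BDA$ by unipotent upper triangular $C\in\GL_d(k_E\llb u^p\rrb)$ so that, for each $i$, the topmost unit $a_{k_i,i}$ in $\col_i(A)$ satisfies $a_{k_i,j}=0$ for $j>i$; then $u^{r_{k_i}}\mid\col_i(DA)$ (using that nonunits of $k_E\llb u^p\rrb$ lie in $u^p k_E\llb u^p\rrb$ and that $r_1\le\cdots\le r_d\le p$), hence $u^{r_{k_i}}\mid M_{ii}=c_iu^{t_i}$ gives $r_{k_i}\le t_i$, and the determinant identity $\sum_i r_{k_i}=\sum_i t_i$ forces equality throughout. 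These column operations manifestly preserve all hypotheses of the lemma, so there is no reduction step whose legitimacy needs to be argued --- exactly the step your inductive route leaves incomplete.
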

\begin{proof}
Write $A=(a_{i, j})$, and suppose that $a_{k_1, 1}$ is the top most element in $\col_1(A)$ that is a unit (which exists because $A$ is invertible). Then multiply both sides of $M=B[u^{r_{1}}, \ldots, u^{r_{d}}]A$ by the following invertible upper triangular matrix

$$C=\left(
 \begin{array}{ccccc}
 1 & -\frac{a_{k_1, 2}}{a_{k_1, 1}} & \cdots & -\frac{a_{k_1, d}}{a_{k_1, 1}}\\
  & 1&0 &0\\
  & &\ddots &0\\
 & & &1
 \end{array}
\right)$$

Let $(a_{i, j}')=A'=AC$ (which is still in $\Mat(k_E\llb u^p \rrb)$), then $\col_1(A')=\col_1(A)$, and $a'_{k_1, j}=0$ for $j>1$.
And $M'=MC$ has the same diagonal of $M$. So we can and do assume that we already have $a_{k_1, 1}$ is the top most unit in $\col_1(A)$, and $a_{k_1, j}=0$ for $j>1$.
Now, do the same procedure for the second column of $A$. That is, suppose $a_{k_2, 2}$ is the top most element in $\col_2(A)$ that is a unit, then make $a_{k_2, j}=0$ for $j>2$.
In the end, we can assume that $a_{k_i, i}$ is the top most unit in $\col_i(A)$, and $a_{k_i, j}=0$ for $j>i$. Clearly we have $\{k_1, \ldots, k_d\}=\{1, \ldots, d\}$ as sets.

Then it is clear that $u^{r_{k_i}} \mid \col_i(DA)$ (using the fact that non-units in $k_E\llb u^p \rrb$ are divisible by $u^p$, and $0 \leq r_1 \leq \cdots \leq r_d \leq p$). So $u^{r_{k_i}} \mid \col_i(BDA)$, and we will have $u^{r_{k_i}} \mid u^{t_i}, \forall i$. However, by a determinant argument, $\sum_{i=1}^d r_{k_i} =\sum_{i=1}^d r_i = \sum_{i=1}^d t_i$, so we must have $u^{r_{k_i}} \parallel u^{t_i}, \forall i$, that is $t_i=r_{k_i}, \forall i$.
\end{proof}


\section{Models of upper triangular reductions of crystalline representations}

\newcommand{\WT}{\textnormal{WT}}

Before we can proceed further with the study of shape of upper triangular Kisin modules with $k_E$-coefficient, we need to introduce the condition \textbf{(C-1)}. One of the aims is to make sure that:
\begin{itemize}
  \item When $\barm \in \mathcal E(\barn_d, \ldots, \barn_1)$, there does \emph{not} exist nonzero morphism $\barn_j \to \barn_i$ for any $1 \leq i <j \leq d$ (i.e., the situation in Statement (3) of Proposition \ref{extKisin} does not happen).
\end{itemize}
However note that \textbf{(C-1)} is stronger than the bullet condition above. We will need the full strength of \textbf{(C-1)} in Theorem \ref{lifting-2}.

\begin{defn}
 \begin{enumerate}
   \item For a rank-1 module $\barm= \overline{\huaM}(t_0, \ldots, t_{f-1}; a) \in \Mod_{\huaS_{k_E}}^{\varphi}$, define $\WT(\barm)$ as the \emph{ordered} set $\{t_0, \ldots, t_{f-1}\}$.

    \item For an upper triangular module $\barm \in \mathcal E(\barn_d, \ldots, \barn_1)$, define $\WT(\barm)$ to be the $d\times f$-matrix, where $\row_i (\WT(\barm))= \WT(\barn_i), \forall 1 \leq i \leq d$.
 \end{enumerate}
\end{defn}

\begin{defn}
Let
\begin{itemize}
  \item $\barzeta_1, \ldots, \barzeta_d : G_K \to k_E^{\times}$ be $d$ characters.
  \item $h_0, \ldots, h_{f-1}$ be $f$ sets, where $h_s$ is a set of $d$ distinct integers in $[0, p]$, for each $0 \leq s \leq f-1$.
\end{itemize}

A \textnormal{\textbf{model}} of the ordered sequence $\{\barzeta_1, \ldots, \barzeta_d\}$ with respect to the ordered sequence $\{h_0, \ldots, h_{f-1}\}$ is a $d\times f$-matrix
$N=(n_{i, s})_{1\leq i\leq d, 0 \leq s \leq f-1},$
such that
\begin{itemize}
  \item  $\col_s(N)=h_s$ as sets of numbers, for $0\leq s \leq f-1$;
  \item For each $1\leq i \leq d$, there exists a rank-$1$ Kisin module with $k_E$-coefficient defined by $\barm(n_{i, 0}, \ldots, n_{i, f-1}; a_i)$ for some $a_i \in k_E^{\times}$ such that
      $$T_{\huaS}(\barm(n_{i, 0}, \ldots, n_{i, f-1}; a_i))=\barzeta_{i} \mid_{G_{\infty}}.$$
\end{itemize}
\end{defn}

With notations from (\textnormal{\textbf{CRYS}}), suppose $\overline \rho$ is upper triangular, that is, $\overline \rho \in \mathcal{E}(\overline \chi_1, \ldots, \overline \chi_d)$ for some characters. By Proposition \ref{diagonal}, there exists some rank-$1$ Kisin modules $\barn_1, \ldots, \barn_d$, such that $\barm \in \mathcal{E}(\barn_d, \ldots, \barn_1)$ and $T_{\huaS}(\barm) =\overline \rho \mid_{G_{\infty}}$. Suppose $\barn_i =\barm(t_{i, 0}, \ldots, t_{i, f-1}; a_i)$. Then $a_i$ are uniquely determined, and
$\col_s (\textnormal{WT}(\barm))=\{t_{1, s}, \ldots, t_{d, s}\}$ is equal to $\HT(D_s)$ as sets of numbers. So, the matrix $\WT(\barm)$ is a model of $\{\overline \chi_1, \ldots, \overline \chi_d\}$ with respect to $\{\HT_0(D), \ldots, \HT_{f-1}(D)\}$. For many theorems in our paper, we will need to have the following condition.

\smallskip
\noindent \textnormal{\textbf{Condition (C-1)}}: Suppose $\overline{\rho} \in \mathcal E(\barchi_1, \ldots, \barchi_d)$, then $\{\overline \chi_1, \ldots, \overline \chi_d\}$ has a unique model with respect to $\{\HT_0(D), \ldots, \HT_{f-1}(D)\}$.
\smallskip

\begin{rem} \label{3nothappen}
It is clear that when condition \textnormal{\textbf{(C-1)}} is satisfied, $\barchi_i \neq \barchi_j, \forall i \neq j$. So in particular, the situation in Statement (3) of Proposition \ref{extKisin} will not happen.
\end{rem}

Here are some examples when the condition is satisfied.
\begin{lemma}
The condition \textnormal{\textbf{(C-1)}} is satisfied if one of the following is true,
 \begin{enumerate}
 \item $K=\Qp$, i.e., $f=1$, and the differences between any two elements in $HT(D_0)$ are never $p-1$.
  \item For each $s$, the differences between two elements in $\HT(D_s)$ are never $1$. And for one $s_0$, $p-1 \notin \HT(D_{s_0})$.
  \item For each $s$, $\HT(D_s) \subseteq [0, p-1]$. And for one $s_0$, $p-1 \notin \HT(D_{s_0})$.
\end{enumerate}
\end{lemma}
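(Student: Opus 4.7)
The plan is to assume that two models $N=(n_{i,s})$ and $N'=(n'_{i,s})$ of $\{\overline\chi_1,\ldots,\overline\chi_d\}$ with respect to $\{\HT_0(D),\ldots,\HT_{f-1}(D)\}$ exist, and deduce $N=N'$. Set $t_{i,s}:=n_{i,s}-n'_{i,s}$. Since the $i$-th rows of $N$ and $N'$ yield rank-one torsion Kisin modules both realizing $\overline\chi_i\mid_{G_\infty}$, the equivalence of (c) and (d) in Lemma~\ref{comparerank1}(2) gives
$$\sum_{s=0}^{f-1} p^{f-1-s}\, t_{i,s}\ \equiv\ 0 \pmod{p^f-1}\qquad \text{for each } i,$$
and forces the unramified scalars $a_i, a'_i$ to agree automatically. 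Moreover $|t_{i,s}|\leq p$ since $n_{i,s},n'_{i,s}\in\HT_s(D)\subseteq [0,p]$, and $\sum_i t_{i,s}=0$ for every $s$ since $\col_s(N)=\col_s(N')=\HT_s(D)$ as sets. Applying Lemma~\ref{solution} row-by-row, each $(t_{i,0},\ldots,t_{i,f-1})$ is of one of two types: \emph{(A)} $\pm(p-1,\ldots,p-1)$, or \emph{(B)} a concatenation of zero strings and nontrivial blocks of the form $\pm(-1,p-1,\ldots,p-1,p)$.

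For hypothesis (1), $f=1$ precludes type B (every such block has length $\geq 2$), and type A would force two elements of $\HT(D_0)$ to differ by $p-1$, contradicting the hypothesis; hence $t_i=0$ for all $i$. For hypothesis (3), $\HT(D_s)\subseteq [0,p-1]$ forces $|t_{i,s}|\leq p-1$, which eliminates the $\pm p$ endpoints required in every type B block; only type A survives, but at $s_0$ it forces $\{n_{i,s_0},n'_{i,s_0}\}=\{0,p-1\}$ (the only pair in $[0,p-1]$ with difference $p-1$), contradicting $p-1\notin\HT(D_{s_0})$.

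For hypothesis (2), the ``differences never $1$'' assumption gives $|t_{i,s}|\neq 1$ whenever $t_{i,s}\neq 0$, which eliminates every nontrivial type B block (each contains $\pm 1$); only type A survives. At $s_0$, $|t_{i,s_0}|=p-1$ together with $p-1\notin\HT(D_{s_0})$ forces $\{n_{i,s_0},n'_{i,s_0}\}=\{1,p\}$. Setting $I^\pm:=\{i:\ t_i=\pm(p-1,\ldots,p-1)\}$, the column relation $\sum_i t_{i,s_0}=0$ yields $|I^+|=|I^-|$, and the distinctness of entries in $\col_{s_0}(N)$ forces $|I^\pm|\leq 1$. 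In the remaining possibility $|I^+|=|I^-|=1$ (with rows $i^+$ and $i^-$), cancelling the rows where $N$ and $N'$ coincide in each column forces the swap $n_{i^+,s}=n'_{i^-,s}$ and $n_{i^-,s}=n'_{i^+,s}$ for every $s$; so the rank-one module built from row $i^+$ of $N$ and the rank-one module built from row $i^-$ of $N'$ have identical tuples $\bolde t$, which by Lemma~\ref{comparerank1}(2) makes $\overline\chi_{i^+}\mid_{G_\infty}$ coincide with $\overline\chi_{i^-}\mid_{G_\infty}$, contradicting the distinctness of the $\overline\chi_i$'s that is a feature of (C-1) (cf.\ Remark~\ref{3nothappen}).

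The main obstacle will be hypothesis (2). Unlike (1) and (3), where the combinatorics of Lemma~\ref{solution} together with the residue conditions on $\HT(D_s)$ directly collapse the type A configuration, in (2) the pair $\{1,p\}$ is genuinely compatible with $p-1\notin\HT(D_{s_0})$; one must therefore trace through the column-by-column swap in detail and appeal to the character-distinctness built into (C-1) in order to eliminate this last configuration.
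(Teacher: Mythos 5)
Your treatment of hypotheses (1) and (3) is correct and is exactly the intended elaboration of the paper's terse argument (the paper merely points to Lemma~\ref{comparerank1} and Lemma~\ref{solution} and leaves the case analysis to the reader). The problem is hypothesis (2), where you flag the pair $\{1,p\}$ as "genuinely compatible" with $p-1\notin\HT(D_{s_0})$ and then try to kill it by invoking the distinctness of the $\overline\chi_i$ via Remark~\ref{3nothappen}. That argument fails on two counts, and there is a much shorter exit that you missed.

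First, the circularity: Remark~\ref{3nothappen} establishes that distinctness of $\overline\chi_1,\ldots,\overline\chi_d$ is a \emph{consequence} of (C-1). You are trying to \emph{prove} (C-1), so you cannot feed that consequence back in as a hypothesis. Second, even granting distinctness, the step "row $i^+$ of $N$ equals row $i^-$ of $N'$, hence $\overline\chi_{i^+}\mid_{G_\infty}=\overline\chi_{i^-}\mid_{G_\infty}$" requires the two unramified scalars $a_{i^+}$ and $a'_{i^-}$ to agree (Lemma~\ref{comparerank1}(2) needs both the $\boldt$-congruence \emph{and} $a=a'$). The automatic agreement of scalars that you correctly noted holds when comparing row $i$ of $N$ with row $i$ of $N'$ (both represent $\overline\chi_i$); it does not hold across two different rows, so the conclusion that the characters coincide is unjustified.

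The gap closes with an observation you overlooked: the paper normalizes $\min(\HT(D_s))=0$, so $0\in\HT(D_s)$ for every $s$. Under the "differences never $1$" hypothesis this forces $1\notin\HT(D_s)$ for every $s$ (since $0$ and $1$ would differ by $1$). Combined with $p-1\notin\HT(D_{s_0})$, \emph{neither} of the two pairs $\{0,p-1\}$, $\{1,p\}$ in $[0,p]$ differing by $p-1$ can be contained in $\HT(D_{s_0})$, so the type-A row difference $\pm(p-1,\ldots,p-1)$ is already impossible at $s_0$. No swap configuration survives, all row differences vanish, and the model is unique — no appeal to character distinctness, no $\square$-bookkeeping, no $\{1,p\}$ case to agonize over.
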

\begin{proof}
If $N$ is another model other than $\WT(\barm)$, then by Lemma \ref{comparerank1}, for each $i$, $\row_i(N)-\row_i(T)$ will satisfy the solutions in Lemma \ref{solution}.
\end{proof}

The naming of the concept of \emph{model} reflects our initial intention to generalize results in \cite[\S 8.2]{GLS14} and \cite[\S 5.3]{GLS13}. In particular, we wanted to find some \emph{maximal model}, which will help us to prove an analogue of \cite[Prop. 5.3.4]{GLS13}. Unfortunately, we were not able to achieve this.


\section{Shapes of upper triangular Kisin modules with $k_E$-coefficient-II}

\begin{prop} \label{shape}
With notations from (\textnormal{\textbf{CRYS}}) and Proposition \ref{diagonal}. Suppose that $\overline \rho$ is upper triangular, and there does \emph{not} exist nonzero morphism $\barn_j \to \barn_i$ for any $1 \leq i <j \leq d$ (e.g., when Condition \textnormal{\textbf{(C-1)}} is satisfied).

Let $\bolde_s$ be a basis of $\barm_s$ as in Proposition \ref{extKisin}, such that $\varphi(\bolde_{s-1})=(\bolde_s)A_s$ where $A_s$ satisfies the statements of Proposition \ref{extKisin} (note that Statement (3) of Proposition \ref{extKisin} will not happen).
Then we must have $x_{s, i, j}=u^{t_{i, s}}y_{s, i, j}$, where
\begin{itemize}
  \item $y_{s, i, j}=0$ if $t_{j, s} < t_{i, s}$.
  \item $y_{s, i, j} \in k_E$ if $t_{j, s}>t_{i, s}$.
\end{itemize}
\end{prop}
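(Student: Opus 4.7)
The plan is to adapt the matrix analysis of Proposition 7.4 of \cite{GLS14}, which handled the case $d=2$. The first step is to combine two distinct bases of $\barm_s$. Theorem \ref{GLS}, applied to the integral Kisin module $\huaM$ and then reduced mod $\omega_E$ (using $E(u) \equiv u \pmod{\omega_E}$, since $K/\Qp$ is unramified and $\pi \equiv 0 \pmod{\omega_E}$), yields a basis $\bar\bolde^{GLS}_s$ of $\barm_s$ in which $\varphi$ has matrix $\bar X_s \Lambda_s$ with $\bar X_s \in \GL_d(k_E\llb u\rrb)$ and $\Lambda_s = [u^{r_{s,1}}, \ldots, u^{r_{s,d}}]$. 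Writing $\bar\bolde^{GLS}_s = \bolde_s T_s$ for $T_s \in \GL_d(k_E\llb u\rrb)$ and comparing with the basis of Proposition \ref{extKisin}, one obtains
\[
 A_s = B_s \Lambda_s C_s, \qquad B_s := T_s \bar X_s \in \GL_d(k_E\llb u\rrb), \quad C_s := \varphi(T_{s-1})^{-1} \in \GL_d(k_E\llb u^p\rrb).
\]
The decisive point is that $C_s$ lies in $\GL_d(k_E\llb u^p\rrb)$, a strict subring of $\GL_d(k_E\llb u\rrb)$; this is the extra input coming from the crystalline lift.

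Next I would rewrite this as $A_s C_s^{-1} = B_s \Lambda_s$ and analyze it column by column. For each $j$, writing $\col_j(C_s^{-1}) = (\gamma_1, \ldots, \gamma_d)^T$ with $\gamma_k \in k_E\llb u^p\rrb$ and $\col_j(B_s) = (\beta_1, \ldots, \beta_d)^T$, the row-$k$ equation reads
\[
 (a_k)_s u^{t_{k,s}} \gamma_k + \sum_{l>k} x_{s,k,l}\, \gamma_l = u^{r_{s,j}} \beta_k.
\]
One solves these successively from $k = d$ upward. At each stage, the requirement $\gamma_k \in k_E\llb u^p\rrb$ (rather than just $k_E\llb u\rrb$), together with the eventual requirement $\det C_s^{-1}(0) \neq 0$, constrains both the $\gamma_k$'s and the $x_{s,k,l}$'s.

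To derive the specific form for a fixed pair $i < j$, the argument mimics the $d = 2$ case. The divisibility $u^{t_{i,s}} \mid x_{s,i,j}$ is obtained because, after dividing the row-$i$ equation through by $(a_i)_s u^{t_{i,s}}$ to isolate $\gamma_i$, the term involving $x_{s,i,j} \gamma_j / u^{t_{i,s}}$ must be absorbed into $\gamma_i \in k_E\llb u^p\rrb$; if $u^{t_{i,s}} \nmid x_{s,i,j}$, tracing the resulting $u$-singularities down the remaining rows eventually forces $\det C_s^{-1}(0) = 0$, contradicting invertibility in $k_E\llb u^p\rrb$. When $t_{j,s} < t_{i,s}$, the divisibility combined with the degree bound $\deg x_{s,i,j} < t_{j,s}$ from Proposition \ref{extKisin} immediately gives $x_{s,i,j} = 0$. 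When $t_{j,s} > t_{i,s}$, writing $y := x_{s,i,j}/u^{t_{i,s}}$, we have $\deg y < t_{j,s} - t_{i,s} \leq p$; the requirement that the contribution of $y$ to $\gamma_i$ lie in $k_E\llb u^p\rrb$ kills every $u^m$-coefficient of $y$ with $1 \leq m < t_{j,s} - t_{i,s}$ (none of which are multiples of $p$), leaving only the constant term, so $y \in k_E$.

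The main obstacle is bookkeeping: for general $d$ the back-substitution for the $\gamma_k$'s produces many coupled terms, and one must simultaneously track $u$-divisibility and $k_E\llb u^p\rrb$-membership. The cleanest organization is an induction on $j - i$: once the claimed shape of $x_{s,k,l}$ is known for all pairs with $l - k < j - i$, the intermediate $\gamma_k$'s (for $i < k < j$) can be eliminated explicitly, and the remaining analysis of the row-$i$ equation reduces essentially to the $d = 2$ argument of \cite{GLS14} applied to the pair of rows $(i, j)$.
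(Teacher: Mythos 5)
Your setup is right and matches the paper: you correctly identify the factorization $A_s = B_s\Lambda_s C_s$ with $C_s \in \GL_d(k_E\llb u^p\rrb)$ coming from the Frobenius twist, and you correctly see that $k_E\llb u^p\rrb$-membership is the extra leverage. But two things are missing, and one of them is a genuine gap.

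First, you omit the key simplification the paper makes: since every $t_{i,s}\in[0,p]$, one may split $\varphi(T_{s-1}) = P_{s-1} + u^pQ_{s-1}$ with $P_{s-1}\in\GL_d(k_E)$ a \emph{constant} matrix and then discard $u^pQ_{s-1}$ entirely (the $u^p$-multiple never affects exact divisibility by $u^{t_{i,s}}$, except when $t_{i,s}=p$, which a determinant argument handles). This reduces the whole analysis to the statement: an upper triangular $X$ with $\deg x_{i,j}<t_j$ satisfying $u^{t_i}\mid\col_i(XA)$ for a \emph{constant} invertible $A$ must have the claimed shape. Working directly with the power-series matrix $C_s^{-1}$, as you do, makes the bookkeeping harder for no gain; the constant reduction is what makes the problem tractable.

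Second, and more seriously, the step ``if $u^{t_{i,s}}\nmid x_{s,i,j}$, tracing the resulting $u$-singularities down the remaining rows eventually forces $\det C_s^{-1}(0)=0$'' is not a proof — it is the whole content of the proposition. Your argument that the ``contribution of $y$ to $\gamma_i$'' must lie in $k_E\llb u^p\rrb$ tacitly assumes $\gamma_j(0)\neq 0$, but invertibility of $C_s^{-1}(0)$ only forces the \emph{determinant} to be nonzero, not any single entry; individual $\gamma_j$'s can vanish at $u=0$, and when they do your coefficient-killing argument collapses. The paper's Lemma~\ref{shapelemma} is precisely the machinery that handles this: it is an induction on $d$ requiring two nontrivial sublemmas (the cases where $t_d$, resp.\ $t_1$, is maximal among $\{t_1,\ldots,t_d\}$) and a careful ``allowable procedures'' cleanup, in which one repeatedly right-multiplies by elementary matrices to kill off-diagonal entries while tracking which entries of $A$ are forced to vanish. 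Your suggestion that ``the remaining analysis \ldots reduces essentially to the $d=2$ argument applied to the pair of rows $(i,j)$'' does not hold: the intermediate $\gamma_l$'s for $i<l<j$ are genuinely coupled through the constant matrix, and there is no way to decouple a single pair of rows without first establishing the shape of the entire submatrix below, which is exactly the paper's induction on dimension rather than your proposed induction on $j-i$.
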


\begin{rem} \label{rem: upper bound of shape}
We remark that Proposition \ref{shape} effectively gives an ``upper bound" for the shape of upper triangular Kisin modules with $k_E$-coefficient that we are studying (we mentioned about this ``upper bound" in the Introduction). This is because for the matrices $A_s$, the elements that can vary are those $y_{s, i, j}$ when $t_{j, s} > t_{i, s}$, and they can only vary in $k_E$. We will need Section 5 to give precise meaning for the ``upper bound", see Proposition \ref{shapesubmodule}.
\end{rem}

\begin{proof}
From the proof of Proposition \ref{diagonal}, we have
$A_s\varphi(T_{s-1})= T_{s}X_s [u^{r_{s, 1}}, \ldots, u^{r_{s, d}}] .$
Let $R_s \in \GL_d(k_E)$ such that
$R_s^{-1}[u^{r_{s, 1}}, \ldots, u^{r_{s, d}}] R_s =[u^{t_{1, s}}, \ldots, u^{t_{d, s}}],$
and consider the equality
$$A_s\varphi(T_{s-1})R_s= T_{s}X_sR_s [u^{t_{1, s}}, \ldots, u^{t_{d, s}}].$$

The $i$-th column on the right hand side is divisible by $u^{t_{i, s}}$. Let $\varphi(T_{s-1})=P_{s-1} +u^pQ_{s-1}$ where $P_{s-1} \in \GL_d(k_E), Q_{s-1} \in \Mat_d(k_E\llb u^p\rrb)$, so we will have $u^{t_{i, s}}\mid \col_i(A_{s}P_{s-1}R_s)$.
Then we can apply the following lemma to conclude, where we let $X=A_s[(a_1)_s^{-1}, \ldots, (a_d)_s^{-1}]$, and $A=[(a_1)_s, \ldots, (a_d)_s]P_{s-1}R_s$ in the lemma.
\end{proof}

\begin{lemma} \label{shapelemma}
Suppose $t_1, \ldots, t_d$ are distinct integers in $[0, p]$.
Suppose $$X = \left(
 \begin{array}{ccccc}
 u^{t_1} & & x_{i,j}\\
         & \ddots & \\
         & & u^{t_d}
 \end{array}
\right), \quad A \in \GL_d(k_E),$$

\noindent where $X$ is an upper triangular matrix with coefficients in $k_E[u]$, such that
\begin{itemize}
  \item $\deg(x_{i, j}) < t_j$, and
  \item $u^{t_i} \mid \col_i(XA)$.
\end{itemize}
Then we must have
$x_{i, j}=u^{t_i}y_{i, j},$ where
\begin{itemize}
  \item $y_{i, j}=0$ if $t_{j} < t_{i}$, and
  \item  $y_{i, j} \in k_E$ if $t_{j}>t_{i}$.
\end{itemize}
\end{lemma}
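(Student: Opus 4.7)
The plan is to decode the column-divisibility hypothesis into coefficient equations, then proceed by reverse induction on the row index, exploiting constraints on $A$ that accumulate as we handle lower rows.

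To set up, observe that the hypothesis $u^{t_i} \mid \col_i(XA)$ is equivalent to $XA = YD$ with $D = \diag(u^{t_1}, \ldots, u^{t_d})$ and $Y := XA D^{-1} \in \Mat(k_E[u])$; the determinant identity $\det Y = \det A \in k_E^{\times}$ in fact shows $Y \in \GL_d(k_E[u])$. Next, for each row $k$ I would expand $x_{k, j} = \sum_l \alpha_{k, j, l}\, u^l$ (with $\alpha_{k, j, l} = 0$ for $l \geq t_j$) and extract the vanishing of the $u^l$-coefficient of the $(k, i')$-entry of $XA$ for $l < t_{i'}$, yielding
\[
[l = t_k]\, a_{k, i'} + \sum_{\substack{j > k \\ t_j > l}} \alpha_{k, j, l}\, a_{j, i'} = 0.
\]

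The main move is reverse induction on $k$, from $d$ down to $1$. The base $k = d$ is immediate: the bottom row $(0, \ldots, 0, u^{t_d})$ of $X$ forces $a_{d, i'} = 0$ whenever $t_{i'} > t_d$, since $a_{d, i'} \in k_E$. For the inductive step, suppose rows $k+1, \ldots, d$ of $X$ already have the claimed shape, so each such row equals $u^{t_{k'}} \mathbf{v}_{k'}$ for a constant vector $\mathbf{v}_{k'} \in k_E^d$; then the divisibility condition becomes the scalar relation $(\mathbf{v}_{k'} A)_{i'} = 0$ for every $i'$ with $t_{i'} > t_{k'}$, imposing further linear dependences on the rows of $A$. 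To conclude that row $k$ also has the claimed shape, it then suffices to show that for each $l \neq t_k$, the only solution to
\[
\sum_{j \in S_{k, l}} \alpha_{k, j, l}\, a_{j, i'} = 0 \quad (i' \in T_l)
\]
is zero, where $S_{k, l} := \{j > k : t_j > l\}$ and $T_l := \{i' : t_{i'} > l\}$.

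The hard part will be establishing that the submatrix $(a_{j, i'})_{j \in S_{k, l},\, i' \in T_l}$ has full row rank $|S_{k, l}|$. My strategy: any nontrivial row dependence among these rows, restricted to columns in $T_l$, should extend to a full-row dependence among rows $\{k+1, \ldots, d\}$ of $A$ by combining with the vanishing already forced on $a_{j, i'}$ for $j > k$ and $i' \notin T_l$ through the inductive hypothesis applied to the lower rows of $X$; this would contradict $\det A \neq 0$. Granting the rank claim, the only surviving coefficient of $x_{k, j}$ is at $u^{t_k}$, and setting $y_{k, j} := \alpha_{k, j, t_k} \in k_E$ yields $x_{k, j} = y_{k, j}\, u^{t_k}$. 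The degree bound $\deg(x_{k, j}) < t_j$ then forces $y_{k, j} = 0$ whenever $t_j < t_k$, completing the induction.
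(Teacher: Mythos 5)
Your overall reduction to coefficient equations is fine, and the determinant observation $Y\in\GL_d(k_E[u])$ is correct. But the proposal has a genuine gap at the heart of the inductive step, and the reverse induction on row index is not the right organizing principle.

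The gap is in the rank claim. You assert that the inductive hypothesis on rows $k+1,\ldots,d$ of $X$ forces the vanishing of $a_{j,i'}$ for $j>k$ and $i'\notin T_l$, and that this lets you extend a hypothetical dependence on $T_l$-columns to a full-row dependence. That is not what the inductive hypothesis gives. After establishing the shape of row $j$ (for $j>k$), what you obtain is $(\mathbf{v}_jA)_{i'}=0$ only for $t_{i'}>t_j$; these $i'$ lie in $T_{t_j}\subseteq T_l$ (when $t_j>l$), so they are constraints on columns \emph{inside} $T_l$, not outside. The only place individual entries of $A$ are forced to vanish is the base case $j=d$, and even there only $a_{d,i'}=0$ for $t_{i'}>t_d$, again inside $T_l$. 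Concretely, take $d=3$, $(t_1,t_2,t_3)=(0,2,1)$. At $k=2$ the relevant $l$ is $0$; one has $S_{2,0}=\{3\}$, $T_0=\{2,3\}$, and the rank claim amounts to $a_{3,3}\neq 0$. But the only constraint on $A$ available from rows $>2$ is $a_{3,2}=0$, which leaves $a_{3,3}=0$ entirely possible for an invertible $A$. The fact that $a_{3,3}\neq 0$ does hold under the full hypotheses is true, but it requires using the divisibility conditions coming from row $1$ of $X$ as well, which your reverse induction has not yet processed. So the inductive step cannot be closed as described.

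The paper avoids this by a different scheme: it inducts on the dimension $d$, uses ``allowable'' column operations $X\rightsquigarrow X(Id-\Mat(c_{i,j}))$ to normalize $X$, and at each stage isolates the row $k$ with $t_k=\max\{t_1,\ldots,t_d\}$ (treating the cases $k=d$, $k=1$, and $1<k<d$ via Sublemmas \ref{sublemma1}, \ref{sublemma2} and the final argument). Processing the row with the largest $t_k$ first is what makes the needed nonvanishing of entries of $A$ (like $a_{d,d}\neq 0$ in Sublemma \ref{sublemma1}) provable, because that row imposes strong enough divisibility to propagate through the column of $A$. Your ordering by row index does not track the size of the $t_j$, and that is exactly why the rank claim escapes the induction. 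If you want to salvage the coefficient-equation framework, you would need to induct on the rank of the values $t_j$ (handling the row with maximal $t$ first and recursively reducing to a smaller matrix), not on the row index; at that point you are essentially reproducing the paper's strategy.
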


\begin{proof}
We prove the lemma by induction on the dimension $d$.
\begin{itemize}
  \item We say that an upper triangular matrix $X \in \Mat(k_E[u])$ of the shape
  $\left(
 \begin{array}{ccccc}
 u^{t_1} & & x_{i,j}\\
         & \ddots & \\
         & & u^{t_d}
 \end{array}
\right)$
 satisfies the property $(\rm DEG)$ if $\deg(x_{i, j}) < t_j, \forall i<j$.
  \item If the conclusion of the lemma is satisfied, we say that $X$ satisfies property $(P)$. We also say $x_{i, j}$ satisfies $(P)$ for a single index $(i, j)$ if $x_{i, j}$ satisfies the conclusion of the lemma.
\end{itemize}

The lemma is trivially true when $d=1$. We want to remark here that when $d=2$, the lemma is true by arguments in \cite[Thm. 7.9]{GLS14}. However, here we give a general argument, which will work for all $d$. So now suppose the lemma is true when the dimension is less than $d$. We now prove it when the dimension becomes $d$. In order to do so, we first prove two sublemmas (Sublemma \ref{sublemma1} and Sublemma \ref{sublemma2}), which are indeed special cases when the dimension becomes $d$. The reason that we are writing these two special cases first, is because they will make the general process much more transparent.
\end{proof}

\noindent \textbf{Notations}. We will use $\Mat(x_{i, j})$ to mean the matrix where the only nonzero element is its $i$-th row, $j$-th column element, and the element is precisely $x_{i, j}$. We hope this does not cause confusion. For a matrix $A$, we use $A_{i, j}$ to mean the co-matrix of $a_{i, j}$, that is, the matrix after deleting $i$-th row and $j$-th column of $A$.

Before we prove the sublemmas, we make a useful definition. Let $X$ satisfy (DEG). We call the following procedure an \textbf{allowable procedure} for $X$:
$$X \rightsquigarrow X'=X(Id - \Mat(c_{i, j})),$$
where $1\leq i<j \leq d$ are two numbers such that $t_i <t_j$, and $c_{i, j} \in k_E$.

It is easy to see that if we let $A'=(Id - \Mat(c_{i, j}))^{-1}A$, then we have the following (note that the only change is the $j$-th column of $X$, and using that $t_i<t_j$)
\begin{itemize}
  \item $X'$ still satisfies property $\rm{(DEG)}$.
  \item $X'A'=XA$ (so in particular $u^{t_i} \mid \col_i(X'A') \Leftrightarrow u^{t_i} \mid \col_i(XA), \forall i$).
  \item $X$ satisfies $(P)$ if and only if $X'$ satisfies $(P)$.
\end{itemize}

\begin{rem} \label{change}
A very useful remark is that, when $X$ satisfies (P), one can apply finite times of \emph{allowable procedures} to change $X$ to the diagonal matrix $[u^{t_1}, \ldots, u^{t_d}]$. One can start by making $x_{d-1, d}=0$ by letting
$X \rightsquigarrow  X(Id - \Mat(y_{d-1, d})).$
And then, one can consecutively make $x_{d-2, d}=0, \ldots, x_{1, d}=0$. Then one can change $x_{d-2, d-1}$ to $0$, and so on.
\end{rem}

\begin{sublemma} \label{sublemma1}
If $t_d$ is maximal in $\{t_1, \ldots, t_d\}$, then $X$ satisfies $(P)$.
\end{sublemma}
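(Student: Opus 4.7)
The plan is to argue by induction on $d$, the base $d=1$ being trivial, using the strict maximality of $t_d$ (recall the $t_i$ are distinct) to peel off the last row and last column of $X$ at once. The key observation is that for every $k<d$ the polynomial
\[
(XA)_{k,d}=u^{t_k}A_{k,d}+\sum_{j=k+1}^{d-1}x_{k,j}A_{j,d}+x_{k,d}A_{d,d}
\]
has every summand of degree $<t_d$ (since $t_k<t_d$ and $\deg(x_{k,j})<t_j\le t_d-1$), while the hypothesis demands $u^{t_d}\mid(XA)_{k,d}$. Hence this polynomial vanishes identically, giving a relation
\[
u^{t_k}A_{k,d}+\sum_{j=k+1}^{d-1}x_{k,j}A_{j,d}+x_{k,d}A_{d,d}=0\qquad (\star_k)
\]
for every $k<d$.

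First one shows $A_{d,d}\ne 0$. If $A_{d,d}=0$, then $(\star_{d-1})$ collapses to $u^{t_{d-1}}A_{d-1,d}=0$, forcing $A_{d-1,d}=0$; inductively going downward, $(\star_k)$ reduces to $u^{t_k}A_{k,d}=0$ and yields $A_{k,d}=0$ for all $k\le d$, contradicting $A\in\GL_d(k_E)$. After rescaling so that $A_{d,d}=1$, one then applies the allowable procedures with indices $(i,d)$ --- all legitimate since $t_i<t_d$ for every $i<d$ --- in the order $i=d-1,d-2,\ldots,1$, choosing at each stage $c_{i,d}=-A_{i,d}^{(\textnormal{current})}$ so as to kill the entry $A_{i,d}$ of the running matrix. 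A bookkeeping check, iterating $(\star_i)$ after the $(d-1,d),\ldots,(i+1,d)$ procedures have been carried out, shows that at this point the current $x_{i,d}$ equals $-u^{t_i}A_{i,d}^{(\textnormal{current})}$, so the same constant $c_{i,d}$ simultaneously reduces $x_{i,d}$ to $0$.

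Once the entire last column (of both $X$ and $A$, above the $(d,d)$-entry) has been cleared, $X$ is block-diagonal with blocks $X''$ (upper triangular of size $d-1$, with diagonal $u^{t_1},\ldots,u^{t_{d-1}}$ and still satisfying $(\textnormal{DEG})$) and $u^{t_d}$, while $A$ is lower block-triangular with corner $1$ and $(d-1)\times(d-1)$ block $A''\in\GL_{d-1}(k_E)$. The original divisibility conditions for $i<d$ restrict to $u^{t_i}\mid\col_i(X''A'')$, which is exactly the hypothesis of Lemma \ref{shapelemma} in dimension $d-1$. Applying that lemma as the induction hypothesis shows $X''$ satisfies $(P)$; since allowable procedures preserve $(P)$ (a direct verification, case-splitting on whether $t_i<t_k$ or not), the original $X$ satisfies $(P)$ as well.

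The main obstacle I foresee is the simultaneous-clearing check in the second paragraph: showing that the same constants $c_{i,d}$ that kill $A_{i,d}$ also kill $x_{i,d}$ requires carefully tracking how each prior procedure modifies the remaining $(\star_{k'})$ for $k'<i$, since an allowable procedure at $(i,d)$ shifts $x_{k',d}$ by $-c_{i,d}\,x_{k',i}$ and shifts the $(d-1)$-th row of $A$. A secondary structural remark is that this Sublemma does not close on itself --- the induction step appeals to the full Lemma \ref{shapelemma} in dimension $d-1$ rather than to Sublemma \ref{sublemma1} alone --- so the sublemma is meant to feed into a simultaneous induction with Lemma \ref{shapelemma} together with the subsequent Sublemma \ref{sublemma2}.
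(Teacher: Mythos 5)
Your proof is correct and matches the paper's argument in all essentials: the degree bound forces the relations $(\star_k)$, a back-substitution gives $a_{d,d}\ne 0$, and a sequence of allowable procedures at $(d-1,d),\dots,(1,d)$ simultaneously clears $x_{i,d}$ and $a_{i,d}$, after which one peels off the last row and column and applies the induction hypothesis to the $(d-1)\times(d-1)$ block. The bookkeeping you flagged as the main obstacle is in fact automatic --- once the procedures at rows $d-1,\dots,i+1$ have zeroed the corresponding $a_{j,d}$, the current relation for row $i$ collapses to $u^{t_i}a_{i,d}+x_{i,d}a_{d,d}=0$, so the same constant kills both entries; the paper sidesteps the concern by interleaving relation-finding with clearing rather than stating all $(\star_k)$ at the outset.
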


\begin{proof}
Since $u^{t_d} \mid X  \left(
 \begin{array}{ccccc}
 a_{1, d}  \\
    \vdots     \\
  a_{d, d}
 \end{array}
\right), $
so we have
$u^{t_d} \mid u^{t_{d-1}}a_{d-1, d} + x_{d-1, d}a_{d, d}.$ Since $\deg(u^{t_{d-1}}a_{d-1, d} + x_{d-1, d}a_{d, d})<t_d$, so $u^{t_{d-1}}a_{d-1, d} + x_{d-1, d}a_{d, d}=0$. We claim that $a_{d, d} \neq 0$. Suppose otherwise, then $a_{d-1, d}=0$. Since
$$u^{t_d} \mid u^{t_{d-2}}a_{d-2, d} + x_{d-2, d-1} a_{d-1, d} + x_{d-2, d} a_{d, d},$$
so $a_{d-2, d}=0$. And similarly we will find $a_{i, d}=0, \forall 1 \leq i \leq d$, which is impossible since $A$ is invertible.

So now $a_{d, d} \neq 0$, we must have
$x_{d-1, d} =u^{t_{d-1}}y_{d-1, d}$ for some $y_{d-1, d} \in k_E.$
Now let $X' = X (Id - \Mat(y_{d-1, d}))$ and $A'= (Id - \Mat(y_{d-1, d}))^{-1}A,$
so $x_{d-1, d}'=0$. Note that the above procedure is an allowable procedure, so we can and do assume our $X$ already satisfies that $x_{d-1, d}=0$. Then we have $a_{d-1, d}=0$.

Since $u^{t_d} \mid u^{t_{d-2}}a_{d-2, d} + x_{d-2, d-1} a_{d-1, d} + x_{d-2, d} a_{d, d}= u^{t_{d-2}}a_{d-2, d} + x_{d-2, d} a_{d, d},$
by similar argument as above, $x_{d-2, d} =u^{t_{d-2}}y_{d-2, d}$. And then we can change $X$ to $X'= X (Id -\Mat(y_{d-2, d}))$, and argue similarly as above.
So in the end, we can actually assume that $x_{i, d}=0, a_{i, d}=0$ for $1 \leq i \leq d-1$. So we have
$$ X A = \left(
 \begin{array}{ccccc}
 X_{d, d} & 0  \\
 0 & u^{t_d}
 \end{array}
\right)
\left(
 \begin{array}{ccccc}
 A_{d, d} & 0  \\
 a_{d, j}\mid_{1\leq j \leq d-1} & a_{d, d}
 \end{array}
\right)
=
\left(
 \begin{array}{ccccc}
 X_{d, d}A_{d, d} & 0  \\
 u^{t_d}a_{d, j}\mid_{1\leq j \leq d-1} & u^{t_d}a_{d, d}
 \end{array}
\right).
$$

So we will have that $u^{t_k} \mid \col_k(X_{d, d}A_{d, d})$ for $1 \leq k \leq d-1$. By induction hypothesis, $X_{d, d}$ satisfies $(P)$, so $X$ satisfies $(P)$, and we are done.
\end{proof}

\begin{sublemma} \label{sublemma2}
If $t_1$ is maximal in $\{t_1, \ldots, t_d\}$, then $X$ satisfies $(P)$.
\end{sublemma}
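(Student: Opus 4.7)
The plan is to mirror Sublemma \ref{sublemma1} by peeling off column $1$ and row $1$ instead of column $d$, exploiting that $t_1$ is maximal. First I would examine column $1$ of $XA$: since $t_1$ is maximal and the $t_j$ are distinct, $t_j < t_1$ for every $j \ge 2$, so for each $i \ge 2$ the entry $(XA)_{i,1} = u^{t_i} a_{i,1} + \sum_{j>i} x_{i,j} a_{j,1}$ is a polynomial in $u$ of degree strictly less than $t_1$. Divisibility by $u^{t_1}$ forces it to vanish, and starting from $i=d$ and working upward yields $a_{i,1}=0$ for all $i\ge 2$, so column $1$ of $A$ is concentrated in row $1$.

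Next, I would write $X$ and $A$ in block form isolating row/column $1$:
$$
X = \begin{pmatrix} u^{t_1} & r \\ 0 & X' \end{pmatrix}, \qquad A = \begin{pmatrix} a_{1,1} & b \\ 0 & A_0 \end{pmatrix}.
$$
Invertibility of $A$ forces $A_0 \in \GL_{d-1}(k_E)$, and the divisibility conditions on columns $k\ge 2$ restrict to $u^{t_k} \mid \col_{k-1}(X'A_0)$, so the inductive hypothesis of Lemma \ref{shapelemma} (dimension $d-1$, sequence $t_2,\ldots,t_d$) shows that $X'$ satisfies $(P)$. By Remark \ref{change} applied to $X'$ -- using only allowable procedures with both indices in $\{2,\ldots,d\}$, which are still allowable procedures for the full matrix $X$ -- I may further assume $X'$ is the diagonal matrix $[u^{t_2},\ldots,u^{t_d}]$. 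Since allowable procedures preserve property $(P)$, it suffices to prove $r=0$ in this reduced setup.

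With $X'$ diagonal, the condition $u^{t_k} \mid (XA)_{i,k} = u^{t_i} a_{i,k}$ for $i,k\ge 2$ forces $a_{i,k}=0$ whenever $t_i<t_k$. I would then order $\{2,\ldots,d\}$ as $j_1,\ldots,j_{d-1}$ with $t_{j_1} > \cdots > t_{j_{d-1}}$ and induct on $m$ to show $x_{1,j_m}=0$. At the $m$-th stage, column $j_m$ of $A$ is supported on $\{1,j_1,\ldots,j_m\}$ by the previous constraints, and the inductive hypothesis $x_{1,j_1}=\cdots=x_{1,j_{m-1}}=0$ together with $t_1>t_{j_m}$ reduces the condition $u^{t_{j_m}} \mid (XA)_{1,j_m}$ to $u^{t_{j_m}} \mid x_{1,j_m} a_{j_m,j_m}$. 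The degree bound $\deg x_{1,j_m} < t_{j_m}$ forces $x_{1,j_m} a_{j_m,j_m} = 0$; invertibility of $A$ applied to the columns $1,j_1,\ldots,j_m$ (which live in a nested chain of coordinate subspaces) guarantees $a_{j_m,j_m}\neq 0$, whence $x_{1,j_m}=0$.

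The main obstacle is this last step: unlike Sublemma \ref{sublemma1}, no allowable procedure can be used to directly eliminate row-$1$ entries (since $t_1 < t_j$ never holds for $j\ge 2$), so the argument must rely on a careful combinatorial bookkeeping of which columns of $A$ can have nonzero entries in which rows, combined with the degree bounds on the $x_{1,j}$. The nesting of the supports of columns $1,j_1,\ldots,j_{d-1}$ of $A$ is what ultimately pins down $a_{j_m,j_m}\neq 0$ and closes the induction.
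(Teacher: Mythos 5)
Your proposal is correct and follows essentially the same approach as the paper's own proof: you kill the first column of $A$, block-decompose to invoke the inductive hypothesis on the $(d-1)\times(d-1)$ minor, use the allowable procedures of Remark \ref{change} to diagonalize that minor, and then clear the entries $x_{1,j}$ by working through the remaining columns of $A$ in decreasing order of $t_j$, using the inductively-established nested support structure of those columns to pin down $a_{j_m,j_m}\neq 0$. The paper does exactly this, so no further comment is needed.
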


\begin{proof}
Now  $u^{t_1} \mid X\left(
 \begin{array}{ccccc}
 a_{1, 1}\\
 \vdots\\
 a_{d, 1}
 \end{array}
\right),$
so we have $u^{t_1} \mid u^{t_d}a_{d, 1}$, and so $a_{d, 1}=0$. Then similarly we have $a_{i, 1}=0$ for $2 \leq i \leq d$.
So $$ X A = \left(
 \begin{array}{ccccc}
 u^{t_1} & (x_{1, j})_{2 \leq j \leq d}\\
 0 & X_{1, 1}
 \end{array}
\right)
\left(
 \begin{array}{ccccc}
 a_{1, 1} & (a_{1, j})_{2 \leq j \leq d}  \\
 0 & A_{1, 1}
 \end{array}
\right)
=
\left(
 \begin{array}{ccccc}
 u^{t_1}a_{1, 1} & u^{t_1}(a_{1, j})+(x_{1, j})A_{1, 1}  \\
 0 & X_{1, 1}A_{1, 1}
 \end{array}
\right).
$$

Then we will have that $u^{t_{k+1}} \mid \col_k (X_{1, 1}A_{1, 1})$ for $1 \leq k \leq d-1$.
So we can use induction hypothesis to see that $X_{1, 1}$ satisfies $(P)$.
What is left is to show that $x_{1, j}=0$ for $2 \leq j \leq d$.
Since $X_{1, 1}$ satisfies (P), by Remark \ref{change}, we can apply finite steps of allowable procedures on $X$ (these procedures do not involve the first column of $X$),
 so that $X_{1, 1}$ becomes a diagonal matrix. That is, we can and do assume $X_{1, 1}=[u^{t_2}, \ldots, u^{t_d}].$

Suppose $t_{k_1} = \max \{t_2, \ldots, t_d\}$, so we have
$u^{t_{k_1}} \mid X\left(
 \begin{array}{ccccc}
 a_{1, k_1}\\
 \vdots\\
 a_{d, k_1}
 \end{array}
\right).$
Because $X_{1, 1}$ is diagonal, it is easy to see that we must have $a_{i, k_1}=0$, for $i \neq 1, k_1$. So we now have
$$u^{t_{k_1}} \mid X \col_{k_1}(A)=
\left(
 \begin{array}{ccccc}
 u^{t_1} & (x_{1, j})\\
 0 & X_{1, 1}
 \end{array}
\right)
\left(\begin{array}{ccccc}
 a_{1, k_1}\\
 0\\
 \vdots\\
 0\\
 a_{k_1, k_1}\\
 0\\
 \vdots\\
 0
 \end{array}
  \right)
=
\left(
\begin{array}{ccccc}
u^{t_1}a_{1, k_1} + x_{1, k_1}a_{k_1, k_1}\\
 0\\
 \vdots\\
 0\\
 u^{t_{k_1}}a_{k_1, k_1}\\
 0\\
 \vdots\\
 0
 \end{array}
 \right).
   $$

Since $\col_1(A)=(1, 0, \ldots, 0)^{T}$ and $A$ is invertible, we must have $a_{k_1, k_1} \neq 0$, and so $x_{1, k_1}=0$.

Now, suppose $t_{k_2} =\textnormal{max} \{\{t_1, \ldots, t_d\}-\{t_1, t_{k_1}\}\}$. Then similarly we can see that $a_{i, k_2}=0$ for $i \neq 1, k_1, k_2$. We must have $a_{k_2, k_2} \neq 0$ because $A$ is invertible. We have
$u^{t_{k_2}} \mid u^{t_1}a_{1, k_2} +  x_{1, k_1}a_{k_1, k_2} + x_{1, k_2}a_{k_2, k_2} = u^{t_1}a_{1, k_2}+ x_{1, k_2}a_{k_2, k_2},$
and we can conclude that $x_{1, k_2}=0$. Argue similarly with consecutive next maximal elements in $\{t_1, \ldots, t_d\}$ will show that $x_{1, j}=0$ for $2 \leq j \leq d$.
\end{proof}

\begin{proof}[\textbf{Proof of Lemma \ref{shapelemma}, continued.}]
So now, let us prove the general lemma. Assume $t_k =\max \{t_1, \ldots, t_d\}$, by Sublemma \ref{sublemma1} and Sublemma \ref{sublemma2}, we can assume $1 <k <d$. So we have
$u^{t_k} \mid X\left(
 \begin{array}{ccccc}
 a_{1, k}\\
 \vdots\\
 a_{d, k}
 \end{array}
\right).$
Then $a_{i, k}=0$ for $k < i \leq d$. So we have that
$u^{t_k} \mid \left(
 \begin{array}{ccccc}
 u^{t_1} & & x_{i,j}\\
         & \ddots & \\
         & & u^{t_k}
 \end{array}
\right)
\left(
 \begin{array}{ccccc}
 a_{1, k}\\
 \vdots\\
 a_{k, k}
 \end{array}
\right).
$
Apply similar allowable procedures as in Sublemma \ref{sublemma1}, we can make $x_{i, k}=0, a_{i, k}=0$ for $1 \leq i \leq k-1$. So,
$$XA=
\left(
 \begin{array}{ccccc}
X_1 & 0 & X_2 \\
0 & u^{t_k} & X_3\\
0 & 0 & X_4
 \end{array}
\right)
\left(
 \begin{array}{ccccc}
A_1 & 0 & A_2 \\
A_3 & a_{k, k} & A_4 \\
A_5 & 0 & A_6
 \end{array}
\right)
=
\left(
 \begin{array}{ccccc}
X_1A_1+X_2A_5 & 0 & X_1A_2+X_2A_6 \\
u^{t_k}A_3+X_3A_5 & u^{t_k}a_{k, k} & u^{t_k}A_4+X_3A_6\\
X_4A_5 & 0 & X_4A_6
 \end{array}
\right).
$$

It is easy to see that $X_{k, k}$ satisfies $(P)$ by induction hypothesis. What is left is to show that $x_{k, j}=0$ for $j>k$. It can be done similarly as in Sublemma \ref{sublemma2}.
\end{proof}


\section{$\mathcal O_E$-module structure of extension classes}

In this section, we prove that certain sets of extension classes have natural $\mathcal O_E$-module structures, and that these structures are compatible with each other. Many results in this section are clearly valid for general $K/\Qp$ and any prime number $p$. But for our purpose, we assume throughout that $K/\Qp$ is unramified and $p>2$. \emph{However}, sometimes we specifically point out the assumption $p>2$, just to emphasize the necessity.

We want to remark here that this section is heavily influenced by the unpublished notes \cite{GLS14+} of Toby Gee, Tong Liu and David Savitt. In fact, practically all the major definitions and results (in particular, Propositions \ref{modulestr}, \ref{exttorep}, \ref{tensorE}) are taken directly from \cite{GLS14+}, including the proofs. We want to heartily thank their generosity again. The notes \cite{GLS14+} played a great and essential role in shaping the style of the main local results in Section 7, and in fact has corrected a quite serious mistake in an earlier draft of our paper.

\subsection{Extension of Kisin modules and $(\varphi, \Ghat)$-modules}

\begin{defn}\hfill
\begin{enumerate}
  \item A sequence $0 \to \huaL \to \huaN \to \huaL' \to 0$ in $\Mod_{\huaS_{\mathcal O_E}}^{\varphi}$
  is called short exact, if it is short exact as a sequence of $\huaS_{\mathcal O_E}$-modules.
  \item A sequence $0 \to \hat\huaL \to \hat\huaN \to \hat\huaL' \to 0$ in $\Mod_{\huaS_{\mathcal O_E}}^{\varphi, \Ghat}$
  is called short exact, if it is short exact as a sequence of $\huaS_{\mathcal O_E}$-modules.
  \item We can define short exact sequences in $\Mod_{\huaS_{k_E}}^{\varphi}$ and $\Mod_{\huaS_{k_E}}^{\varphi, \Ghat}$ analogously.
\end{enumerate}
\end{defn}

\begin{defn}\hfill
\begin{enumerate}
  \item Suppose $\huaL, \huaL' \in \Mod_{\huaS_{\mathcal O_E}}^{\varphi}$.
  Let $\Ext(\huaL', \huaL)$ be the set of short exact sequences $0 \to \huaL \to \huaN \to \huaL' \to 0$ in the category $\Mod_{\huaS_{\mathcal O_E}}^{\varphi}$, modulo the equivalence relation as follows. Call
  $0 \to \huaL \to \huaN^{(1)} \to \huaL' \to 0$
  and
  $0 \to \huaL \to \huaN^{(2)} \to \huaL' \to 0$
  equivalent, if there exists $\xi: \huaN^{(1)} \to \huaN^{(2)}$ such that the following diagram commutes:
$$\begin{CD}
0 @>>> \huaL @>>> \huaN^{(1)} @>>> \huaL' @>>> 0\\
@. @| @V\xi VV @| @.\\
0 @>>> \huaL @>>> \huaN^{(2)} @>>> \huaL' @>>> 0
\end{CD}$$

\item We can define similar $\Ext$'s for pairs of objects in categories $\Mod_{\huaS_{\mathcal O_E}}^{\varphi, \Ghat}$, $\Mod_{\huaS_{k_E}}^{\varphi}$, and $\Mod_{\huaS_{k_E}}^{\varphi, \Ghat}$.
\end{enumerate}
\end{defn}

\begin{rem}
Indeed, when we define $\Ext$'s in various categories, we should have added certain subscripts to distinguish the situation. However, in our paper, the category where we are taking $\Ext$ are mostly clear from the context. We do sometimes add some subscripts (whose meaning will be obvious) to avoid ambiguity.
\end{rem}

\begin{prop}[\cite{GLS14+}] \label{modulestr}
The following statements hold.
\begin{enumerate}
  \item Suppose $\huaL', \huaL \in \Mod_{\huaS_{\mathcal O_E}}^{\varphi}$, then $\Ext (\huaL', \huaL)$ has an $\mathcal O_E$-module structure.
  \item Suppose $\hat\huaL', \hat\huaL \in \Mod_{\huaS_{\mathcal O_E}}^{\varphi, \Ghat}$, then $\Ext(\hat \huaL', \hat \huaL)$ has an $\mathcal O_E$-module structure.

  \item Suppose $p>2$, and both $\hatL', \hatL \in \Mod_{\huaS_{\mathcal O_E}}^{\varphi, \Ghat}$ are crystalline, then $\Ext_{\rm{cris}}(\hatL', \hatL)$ has an $\mathcal O_E$-module structure. Here, $\Ext_{\rm{cris}}(\hatL', \hatL) \subseteq \Ext(\hatL', \hatL)$ consists of equivalence classes of short exact sequences where the central object is crystalline.

  \item Suppose $\barl', \barl \in \Mod_{\huaS_{k_E}}^{\varphi}$, then $\Ext(\barl', \barl)$ has a $k_E$-vector space structure.
   \item Suppose $\barhatl', \barhatl \in \Mod_{\huaS_{k_E}}^{\varphi, \Ghat}$, then $\Ext (\barhatl', \barhatl)$ has a $k_E$-vector space structure.

\item Suppose $\huaL', \huaL \in \Mod_{\huaS_{\mathcal O_E}}^{\varphi}$, then the following natural map
    $$\Ext (\huaL', \huaL)/\omega_E\Ext (\huaL', \huaL) \to \Ext (\barl', \barl) $$
    is an injective homomorphism of $k_E$-vector spaces.

\item Suppose $\hat\huaL', \hat\huaL \in \Mod_{\huaS_{\mathcal O_E}}^{\varphi, \Ghat}$, then the following natural map
    $$\Ext(\hat \huaL', \hat \huaL)/\omega_E\Ext(\hat \huaL', \hat \huaL) \to \Ext (\barhatl', \barhatl)$$
    is an injective homomorphism of $k_E$-vector spaces.

\end{enumerate}
\end{prop}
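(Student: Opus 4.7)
The plan is to realize each of these $\Ext$ sets concretely as the cokernel of an explicit $\mathcal O_E$-linear map between matrix spaces, from which all claimed module structures and the natural reduction maps become transparent. For (1), fix free $\huaS_{\mathcal O_E}$-bases of $\huaL$ and $\huaL'$ with Frobenius matrices $A$ and $B$ respectively. Any extension $\huaN$ is free over $\huaS_{\mathcal O_E}$, and after choosing a compatible splitting of the underlying modules the Frobenius on $\huaN$ takes the block form $\bigl(\begin{smallmatrix} A & X \\ 0 & B \end{smallmatrix}\bigr)$ for some $X \in \Mat(\huaS_{\mathcal O_E})$. A change of splitting compatible with both sub- and quotient-object has the form $\bigl(\begin{smallmatrix} I & Y \\ 0 & I \end{smallmatrix}\bigr)$, $Y \in \Mat(\huaS_{\mathcal O_E})$, and a direct computation shows it replaces $X$ by $X + A\varphi(Y) - YB$ (with the obvious indexing over embeddings). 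Hence $\Ext(\huaL',\huaL) = \Coker(Y \mapsto A\varphi(Y) - YB)$, which is an $\mathcal O_E$-module since the coboundary map is $\mathcal O_E$-linear; this proves (1), and (4) is identical after reducing modulo $\omega_E$.

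For the $(\varphi,\Ghat)$-categories (parts (2), (3), (5)), Lemma \ref{pnot2} (using $p>2$) says an object is determined by its Frobenius matrix together with a matrix $A_\tau \in \Mat(\hat R_{\mathcal O_E})$ for $\tau$ on $\mhat$ subject to the listed compatibilities. Relative to the splitting above, an extension $\hatN$ is encoded by a pair $(X, X_\tau) \in \Mat(\huaS_{\mathcal O_E}) \times \Mat(\hat R_{\mathcal O_E})$ satisfying the Frobenius–$\tau$ compatibility and the congruence $X_\tau \in \Mat(I_+\hat R_{\mathcal O_E})$; two such pairs represent the same class precisely when they differ by the coboundary of a single $Y \in \Mat(\huaS_{\mathcal O_E})$ (the condition $\huaL \subseteq \hatN^{H_K}$ forces $Y$ to lie in $\huaS_{\mathcal O_E}$, not the larger $\hat R_{\mathcal O_E}$). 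Both the compatible pairs and the coboundaries form $\mathcal O_E$-submodules, giving (2); (5) is the $\omega_E$-reduction. For (3), Theorem \ref{Ozeki} reduces, under our splitting, to the single additional condition that $X_\tau$ lies in $u^p\varphi(\mathfrak t) W(R)_{\mathcal O_E} \otimes_{\varphi,\huaS_{\mathcal O_E}} \huaL$ (the blocks corresponding to $\hatL$ and $\hatL'$ already satisfy Ozeki's divisibility by assumption). This is a linear condition, closed under addition and $\mathcal O_E$-scaling, so $\Ext_{\rm cris}(\hatL',\hatL)$ is an $\mathcal O_E$-submodule of $\Ext(\hatL',\hatL)$.

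For (6) and (7), the natural map is induced by entrywise reduction modulo $\omega_E$ of the representing matrix (or pair); $\mathcal O_E$-linearity is immediate from the cokernel description and $\omega_E$ is visibly in the kernel, so we obtain a $k_E$-linear map on the quotient. Injectivity is a Nakayama-style argument: in the matrix model, multiplication by $\omega_E$ on $\Ext(\huaL',\huaL)$ is literally $[X]\mapsto [\omega_E X]$. If the reduction of $[X]$ vanishes, there exists $\bar Y \in \Mat(\huaS_{k_E})$ with $\bar X = \bar A\varphi(\bar Y) - \bar Y\bar B$; lifting $\bar Y$ arbitrarily to $Y \in \Mat(\huaS_{\mathcal O_E})$ gives $X - (A\varphi(Y) - YB) \in \omega_E\Mat(\huaS_{\mathcal O_E})$, i.e.\ $[X] \in \omega_E\Ext(\huaL',\huaL)$. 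For (7) the key observation is that a single $\bar Y$ must trivialize both the Frobenius and the $\tau$ coboundaries (because $(\varphi,\Ghat)$-equivalences are implemented by one and the same $Y$), so lifting $\bar Y$ once and for all simultaneously handles both components and the same argument goes through.

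The main obstacle I expect is in (3), namely organizing the proof that the Baer sum of two crystalline extensions remains crystalline. Via the matrix description this reduces to checking that $u^p\varphi(\mathfrak t)W(R)_{\mathcal O_E}$ is an $\mathcal O_E$-submodule on the $X_\tau$-block and that the nonlinear-looking compatibility $A_\tau\tau(\varphi(A_\varphi)) = \varphi(A_\varphi)\varphi(A_\tau)$ reduces, on the off-diagonal block, to a linear relation between $X$ and $X_\tau$ (so that matrix addition preserves it). A secondary subtle point is the correct formulation of the coboundary relation for the $(\varphi,\Ghat)$-version: one must verify that the $Y$ implementing an equivalence is genuinely constrained to $\Mat(\huaS_{\mathcal O_E})$ (rather than the larger $\Mat(\hat R_{\mathcal O_E})$), without which the cokernel would no longer classify the intended $(\varphi,\Ghat)$-extensions and the injectivity argument for (7) would fail.
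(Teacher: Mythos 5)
Your approach is essentially the paper's: represent each extension by a block upper-triangular matrix (with the off-diagonal block carrying the extension data) and identify $\Ext$ with such blocks modulo the coboundary action $W \mapsto A\varphi(W) - WA'$, where $A$, $A'$ denote the Frobenius matrices of $\huaL$, $\huaL'$. The $\mathcal O_E$-linearity claims in (1)--(5) then go through as you describe, and the concerns you raise about (3) -- linearity of the off-diagonal block of the $\tau$-$\varphi$ compatibility and the $u^p\varphi(\huat)$-divisibility constraint on the $\tau$-block -- are precisely what the paper verifies. One stylistic divergence: the paper's proof of (2) tracks the full family $(Y_g)_{g \in \Ghat}$ rather than reducing to $\tau$ via Lemma \ref{pnot2}, but under the standing $p>2$ hypothesis of Section 5 your reduction is legitimate (and the paper itself performs the $\tau$-reduction for (3)).

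There is, however, a genuine gap in your argument for (6)/(7). After lifting $\bar W$ to $W$ and writing $C - (A\varphi(W) - WA') = \omega_E P$, you conclude $[C] \in \omega_E \Ext(\huaL', \huaL)$. But your matrix model of $\Ext$ is a quotient not of all of $\Mat(\huaS_{\mathcal O_E})$ but of the $\mathcal O_E$-submodule of off-diagonal blocks $C$ for which $\left(\begin{smallmatrix} A & C \\ 0 & A' \end{smallmatrix}\right)$ has $E(u)$-height at most $r$, i.e.\ admits a companion $\left(\begin{smallmatrix} B & D \\ 0 & B' \end{smallmatrix}\right)$ over $\huaS_{\mathcal O_E}$ whose product with it is $E(u)^r\,\mathrm{Id}$. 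You know $\omega_E P$ lies in this submodule (it is a difference of two valid blocks), but to assert $[C] = [\omega_E P] \in \omega_E\Ext$ you must show $P$ itself does, i.e.\ that the submodule is saturated with respect to $\omega_E$ -- and this is not automatic. The paper closes exactly this hole: from the companion of $\left(\begin{smallmatrix} A & \omega_E P \\ 0 & A' \end{smallmatrix}\right)$ one reads off $AD + \omega_E P B' = 0$, so $\omega_E \mid AD$; multiplying on the left by $B$ (which also satisfies $BA = E(u)^r\,\mathrm{Id}$) gives $\omega_E \mid E(u)^r D$, and since $\omega_E \mid E(u)x$ forces $\omega_E \mid x$ in $\huaS_{\mathcal O_E}$ one gets $\omega_E \mid D$, whence $D/\omega_E$ serves as the companion block for $P$. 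The analogous saturation check is also needed in (7). Without this divisibility argument the Nakayama step you invoke does not go through.
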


\begin{proof}
\textbf{Proof of (1)}. Let $d= \textnormal{rk}_{\huaSOE}\huaL, d'= \textnormal{rk}_{\huaSOE}\huaL'$.
Let $\bolde=(e_1, \ldots, e_d)$ be a fixed $\huaS_{\mathcal O_E}$-basis of $\huaL$, and let $\bolde'=(e_1', \ldots, e'_{d'})$ be a fixed $\huaS_{\mathcal O_E}$-basis of $\huaL'$.
\emph{For brevity}, in the following, we simply use $e, e'$ to denote the bases. Suppose $\varphi(e)=eA, \varphi(e')=e'A'$, where $A, A' \in \Mat(\huaS_{\mathcal O_E})$, and let $B, B' \in \Mat(\huaS_{\mathcal O_E})$ such that $AB=(E(u))^r Id, A'B'=(E(u))^r Id$.

Let $M$ be the set of matrices in $\Mat(\huaS_{\mathcal O_E})$ with the shape
$   \left(
 \begin{array}{ccccc}
    A  & C \\
0 & A'
 \end{array}
\right)$ such that there exists
$\left(
 \begin{array}{ccccc}
    B & D\\
0 & B'
 \end{array}
\right) \in \Mat(\huaS_{\mathcal O_E})
$ such that
$
 \left(
 \begin{array}{ccccc}
    A  & C \\
0 & A'
 \end{array}
\right)
\left(
 \begin{array}{ccccc}
    B & D\\
0 & B'
 \end{array}
\right)
=(E(u))^r Id.$
$M$ has a natural $\mathcal O_E$-module structure where
$$a\left(
 \begin{array}{ccccc}
    A & C_1\\
0 & A'
 \end{array}
\right)
+
b\left(
 \begin{array}{ccccc}
    A & C_2\\
0 & A'
 \end{array}
\right)
:=
\left(
 \begin{array}{ccccc}
    A & aC_1+bC_2\\
0 & A'
 \end{array}
\right), \forall a, b \in \mathcal O_E.$$

Define an equivalence relation in $M$ such that
$\left(
 \begin{array}{ccccc}
    A & C_1\\
0 & A'
 \end{array}
\right)$ and
$\left(
 \begin{array}{ccccc}
    A & C_2\\
0 & A'
 \end{array}
\right)$ are equivalent if there exists a matrix $W$, such that $C_1 -C_2 =WA'-A\varphi(W)$.
Let $M_0$ be the subset of $M$ consisting of elements equivalent to $\left(
 \begin{array}{ccccc}
    A & 0\\
0 & A'
 \end{array}
\right)$. One can easily check that $M_0$ is a submodule of $M$. Let $\tilde M := M/M_0$. One can easily check that if we change $\bolde, \bolde'$ to some other bases, we will get isomorphic $\tilde M$.


Now for $\tilde x \in \Ext (\huaL', \huaL)$, choose a representative: $0 \to \huaL \to \huaN \to \huaL' \to 0$. Take a section (of $\huaS_{\mathcal O_E}$-modules) $s_{\tilde x}: \huaL' \to \huaN$, then $(e, s_{\tilde x}(e'))$ forms a basis for $\huaN$.
Then
$\varphi(e, s_{\tilde x}(e')) = (e, s_{\tilde x}(e'))
\left(
 \begin{array}{ccccc}
    A & C_{\tilde x, s_{\tilde x}}\\
0 & A'
 \end{array}
\right)$
for some matrix $C_{\tilde x, s_{\tilde x} }.$

We define a map of sets $F: \Ext(\huaL', \huaL) \to \tilde M$ by mapping $\tilde x$ above to $\left(
 \begin{array}{ccccc}
    A & C_{\tilde x, s_{\tilde x}}\\
0 & A'
 \end{array}
\right)$. One can easily check that this map is well-defined, in particular, it does not depend on the choice of the short exact sequence for $\tilde x$, or the choice of the section $s_{\tilde x}$. One can also easily check that this map is a bijection. So we can equip $\Ext (\huaL', \huaL)$ with an $\mathcal O_E$-structure via that on $\tilde M$.

\textbf{Proof of (2)}. The proof is very similar to (1). We will give a sketch, since the ideas will be used later.
Again, let $e, e'$ be a basis of $\huaL, \huaL'$ respectively, and suppose $\varphi(e)=eA, \varphi(e')=e'A'$, $g(1\otimes_{\varphi}e)=(1\otimes_{\varphi}e)X_g$,  $g(1\otimes_{\varphi}e')=(1\otimes_{\varphi}e')X_g'$, where $X_g, X'_g \in \Mat(\hat R_{\mathcal O_E}), \forall g \in \hat G$.

Let $M$ be the set where an element $m \in M$ is a set of matrices consisting of
$m_{\varphi}=\left(
 \begin{array}{ccccc}
    A & C\\
0 & A'
 \end{array}
\right)$ and
$m_g=\left(
 \begin{array}{ccccc}
    X_g & Y_g\\
0 & X_g'
 \end{array}
\right)
$ for each $g \in \hat G$,
where $C \in \Mat(\huaS_{\mathcal O_E}), Y_g \in \Mat(\hat R_{\mathcal O_E})$, which satisfy the following conditions:
\begin{itemize}
  \item There exists $\left(
 \begin{array}{ccccc}
    B & D\\
0 & B'
 \end{array}
\right)
\in \Mat(\huaS_{\mathcal O_E})$ such that
$
 \left(
 \begin{array}{ccccc}
    A  & C \\
0 & A'
 \end{array}
\right)
\left(
 \begin{array}{ccccc}
    B & D\\
0 & B'
 \end{array}
\right)
= (E(u))^r Id.
$

\item $m_{g_1g_2} = g_1(m_{g_2}) m_{g_1}, \forall g_1, g_2 \in \hat G$.

\item $m_g g(\varphi(m_{\varphi})) = \varphi(m_{\varphi}) \varphi(m_g), \forall g \in \hat G$.

\item $m_h =\left(
 \begin{array}{ccccc}
    Id & 0\\
0 & Id
 \end{array}
\right), \forall h \in H_K.$

\item $Y_g \in \Mat((I_{+}\hat R)_{\mathcal O_E}), \forall g \in \hat G$.
\end{itemize}

$M$ has a natural $\mathcal O_E$-module structure, where if $m^{(i)} \in M, i=1, 2$ such that
$m^{(i)}_{\varphi}=\left(
 \begin{array}{ccccc}
    A & C^{(i)}\\
0 & A'
 \end{array}
\right),
m^{(i)}_g=\left(
 \begin{array}{ccccc}
    X_g & Y_g^{(i)}\\
0 & X_g'
 \end{array}
\right)_{g \in \hat G},
$
and if $a, b \in \mathcal O_E$, then define $m=am^{(1)}+b m^{(2)}$ to be such that
$$m_{\varphi}=\left(
 \begin{array}{ccccc}
    A & aC^{(1)}+bC^{(2)}\\
0 & A'
 \end{array}
\right),
m_g=\left(
 \begin{array}{ccccc}
    X_g & aY_g^{(1)}+bY_g^{(2)}\\
0 & X_g'
 \end{array}
\right)_{g \in \hat G}.
$$

Define an equivalence relation on $M$, where $m^{(1)}$ and $m^{(2)}$ are equivalent, if there exists a matrix $W \in \Mat(\huaS_{\mathcal O_E})$, such that $C^{(1)}-C^{(2)}=WA'-A\varphi(W)$ and $Y_g^{(1)}-Y_g^{(2)}=W X_g' -X_g g(W), \forall g \in \hat G.$
Then let $M_0$ be the submodule of $M$ consisting of elements equivalent to $m_0$, where
$m_{0, \varphi}=\left(
 \begin{array}{ccccc}
    A & 0\\
0 & A'
 \end{array}
\right),
m_{0, g}=\left(
 \begin{array}{ccccc}
    X_g & 0\\
0 & X_g'
 \end{array}
\right)_{g \in \hat G}.
$
Let $\tilde M =M/M_0$. One can show similarly as in the proof of statement (1) that there is a bijection between $\Ext(\hat \huaL', \hat \huaL)$ and $\tilde M$, and so one can equip an $\mathcal O_E$-module structure on $\Ext(\hat \huaL', \hat \huaL)$.

\textbf{Proof of (3)}. Similarly as in the proof of (2), let $e, e'$ be a basis of $\huaL, \huaL'$ respectively, and suppose $\varphi(e)=eA, \varphi(e')=e'A'$, $\tau(1\otimes_{\varphi}e)=(1\otimes_{\varphi}e)X_{\tau}$,  $\tau(1\otimes_{\varphi}e')=(1\otimes_{\varphi}e')X_{\tau}'$, where
$X_{\tau}-Id, X'_{\tau}-Id \in \Mat(\Rhat_{\mathcal O_E} \cap (u^p\varphi(\mathfrak t)W(R)_{\mathcal O_E}))$, by Theorem \ref{Ozeki} and Remark \ref{remarkOzeki}.

Let $M$ be the set where an element $m \in M$ is a set of two matrices
$m_{\varphi}=\left(
 \begin{array}{ccccc}
    A & C\\
0 & A'
 \end{array}
\right),
m_{\tau}=\left(
 \begin{array}{ccccc}
    X_{\tau} & Y_{\tau}\\
0 & X_{\tau}'
 \end{array}
\right),
$
where $C \in \Mat(\huaS_{\mathcal O_E}), Y_{\tau} \in \Mat(\Rhat_{\mathcal O_E})$, which satisfy the following conditions:
\begin{itemize}
  \item There exists $\left(
 \begin{array}{ccccc}
    B & D\\
0 & B'
 \end{array}
\right)
$ such that
$
 \left(
 \begin{array}{ccccc}
    A  & C \\
0 & A'
 \end{array}
\right)
\left(
 \begin{array}{ccccc}
    B & D\\
0 & B'
 \end{array}
\right)
= (E(u))^r Id.
$

\item $m_{\tau} \tau(\varphi(m_{\varphi})) = \varphi(m_{\varphi}) \varphi(m_\tau)$.

\item   $Y_{\tau} \in  \Mat(\Rhat_{\mathcal O_E} \cap (u^p\varphi(\mathfrak t)W(R)_{\mathcal O_E}))$.
\item $g(m_{\tau}) = \prod_{k=0}^{\varepsilon_p(g)-1} \tau^k(m_{\tau})$ for all $g\in G_{\infty}$ such that $\varepsilon_p(g) \in \mathbb{Z}^{\geq 0}$.
\end{itemize}

Similarly as in the proof of Statement (2), $M$ has a natural $\mathcal O_E$-module structure. Then we can similarly define an equivalence relation, and take the quotient $\tilde M$. Combining with Proposition \ref{Ozeki} and Remark \ref{remarkOzeki}, we can show that there is a bijection between $\Ext_{\rm{cris}}(\hatL', \hatL)$ and $\tilde M$, and so $\Ext_{\rm{cris}}(\hatL', \hatL)$ has an $\mathcal O_E$-module structure.

\textbf{Proof of (4)(resp. (5))} is very similar to that of (1) (resp. (2)).

\textbf{Proof of (6)}. To prove Statement(6), we use notations in the proof of Statement (1). Suppose $\tilde x \in \Ext(\hual', \hual)$ maps to $0$ in $\Ext(\barl', \barl)$, then it suffices to show that $\tilde x \in \omega_E\Ext(\hual', \hual)$.
Suppose $\tilde x$ corresponds to
$\left(
 \begin{array}{ccccc}
    A  & C \\
0 & A'
 \end{array}
\right)$, then
$\left(
 \begin{array}{ccccc}
    \overline A  & \overline C \\
0 & \overline A'
 \end{array}
\right)$ is equivalent to the trivial extension in $\Ext(\barl', \barl)$.
So there exists $\overline W \in \Mat(\huaS_{k_E})$ such that $\overline C=\overline W \overline A' -\overline A \varphi(\overline W)$. Take any lift $W \in \Mat(\huaS_{\mathcal O_E})$ of $\overline W$. Then we have $C-WA'-A\varphi(W) = \omega_E P$ for some $P \in \Mat(\huaS_{\mathcal O_E})$. So $\left(
 \begin{array}{ccccc}
    A  & C \\
0 & A'
 \end{array}
\right)$ is in fact equivalent to $\left(
 \begin{array}{ccccc}
    A  & \omega_E P \\
0 & A'
 \end{array}
\right)$.
So now it suffices to show that $\left(
 \begin{array}{ccccc}
    A  &  P \\
0 & A'
 \end{array}
\right)$ is an element in $\Ext(\hual', \hual)$. Suppose
$\left(
 \begin{array}{ccccc}
    A  & \omega_E P \\
0 & A'
 \end{array}
\right)
\left(
 \begin{array}{ccccc}
    B  & D \\
0 & B'
 \end{array}
\right)
= (E(u))^r Id$, then $AD+ \omega_E PB'=0$. We have $\omega_E \mid AD$, so $\omega_E \mid A'AD=(E(u))^r D$. Thus $\omega_E \mid D$ because $\omega_E \mid E(u)x$ in $\huaS_{\mathcal O_E}$ if and only if $\omega_E \mid x$.
So now, we have
$\left(
 \begin{array}{ccccc}
    A  &  P \\
0 & A'
 \end{array}
\right)
\left(
 \begin{array}{ccccc}
    B  & D/\omega_E \\
0 & B'
 \end{array}
\right)
=(E(u))^r Id.
$
And so $\tilde x \in \omega_E \Ext(\hual', \hual)$.

\textbf{Proof of (7)} is similar to that of Statement (6).
\end{proof}

\begin{rem} \label{invertpext}
\begin{enumerate}
  \item Let $\huaS_E=\huaS\otimes_{\Zp} E$, then we can define a category $\Mod_{\huaS_{E}}^{\varphi}$ similarly as $\Mod_{\huaS_{\mathcal O_E}}^{\varphi}$. It is clear that if $\huaM \in \Mod_{\huaS_{\mathcal O_E}}^{\varphi}$, then $\huaM [\frac{1}{p}] \in \Mod_{\huaS_{E}}^{\varphi}$.
  \item Given two modules $\huaL', \huaL \in \Mod_{\huaS_{E}}^{\varphi}$, we can define the set $\Ext(\huaL', \huaL)$ similarly as in the category $\Mod_{\huaS_{\mathcal O_E}}^{\varphi}$, and we can similarly show that $\Ext(\huaL', \huaL)$ is an $E$-vector space.
  \item We can also similarly define a category $\Mod_{\huaS_{E}}^{\varphi, \Ghat}$ and $\Ext$ in it.
\end{enumerate}
\end{rem}

The following two lemmas (Lemma \ref{81}, \ref{taushape}) are extracted from the proof of \cite[Lem. 8.1]{GLS14}.

\begin{lemma} \label{81}
Let $\zeta \in R\otimes_{\Fp}k_E$, and write it as $\zeta= \sum_{i=1}^n y_i\otimes a_i$ where $y_i \in R$, and $a_i \in k_E$ are independent over $\Fp$. Let $$v_R(\zeta):=\textnormal{min}\{v_R(y_i)\}.$$
Then $v_R$ is a well-defined valuation on $R\otimes_{\Fp}k_E $ (so in particular, it does not depend on the sum representing $\zeta$).
\end{lemma}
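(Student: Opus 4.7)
The plan is to reduce well-definedness to a single change-of-basis computation, and then read off the valuation inequalities from a fixed basis expansion. Fix once and for all an $\Fp$-basis $b_1, \ldots, b_d$ of $k_E$, where $d = [k_E : \Fp]$. Since $k_E$ is free of rank $d$ over $\Fp$, every $\zeta \in R \otimes_{\Fp} k_E$ has a unique expression $\zeta = \sum_{i=1}^d y_i \otimes b_i$ with $y_i \in R$, so the auxiliary function $w(\zeta) := \min_i v_R(y_i)$ (with the convention $v_R(0) = +\infty$) is manifestly well-defined.

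First I would show that $w$ coincides with the formula in the lemma. Given an expression $\zeta = \sum_{i=1}^n y_i \otimes a_i$ with the $a_i \in k_E$ that are $\Fp$-linearly independent, extend $\{a_1, \ldots, a_n\}$ to a full $\Fp$-basis $\{a_1, \ldots, a_d\}$ of $k_E$ by appending vectors with coefficients $y_{n+1} = \cdots = y_d = 0$; this does not affect $\min_i v_R(y_i)$. Thus it suffices to prove that for any two $\Fp$-bases $(b_1, \ldots, b_d)$ and $(a_1, \ldots, a_d)$ of $k_E$, if $\zeta = \sum_i y_i \otimes b_i = \sum_j y'_j \otimes a_j$, then $\min_i v_R(y_i) = \min_j v_R(y'_j)$. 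Write $a_j = \sum_i c_{ij} b_i$ with $(c_{ij}) \in \GL_d(\Fp)$; then $y_i = \sum_j c_{ij} y'_j$. Since each $c_{ij} \in \Fp \subseteq R$ has $v_R(c_{ij}) \in \{0, +\infty\}$, the ultrametric inequality for $v_R$ on $R$ gives $v_R(y_i) \geq \min_j v_R(y'_j)$ for every $i$, and therefore $\min_i v_R(y_i) \geq \min_j v_R(y'_j)$; applying the same argument to the inverse transition matrix, which also lies in $\GL_d(\Fp)$, yields the opposite inequality.

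Next I would verify the remaining valuation properties working in the fixed basis $(b_1, \ldots, b_d)$. The ultrametric inequality $v_R(\zeta + \zeta') \geq \min(v_R(\zeta), v_R(\zeta'))$ is immediate from componentwise addition in the basis expansion together with the ultrametric property of $v_R$ on $R$. For multiplication, I expand using the $\Fp$-valued structure constants $b_i b_j = \sum_k c_{ijk} b_k$: the coefficient of $b_k$ in $\zeta \zeta'$ is $\sum_{i,j} c_{ijk} y_i y'_j$, whose $v_R$-value is bounded below by $\min_{i,j}(v_R(y_i) + v_R(y'_j)) = v_R(\zeta) + v_R(\zeta')$, so $v_R(\zeta \zeta') \geq v_R(\zeta) + v_R(\zeta')$. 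Non-degeneracy is automatic: $v_R(\zeta) = +\infty$ iff every $y_i = 0$ iff $\zeta = 0$.

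The only genuinely substantive step is the change-of-basis argument, and it reduces to the elementary observation that transition matrices between $\Fp$-bases lie in $\GL_d(\Fp) \subseteq \GL_d(R)$, with entries of $v_R$-value either $0$ (if nonzero) or $+\infty$ (if zero), so such matrices cannot shift the minimum. Consequently I do not anticipate any serious obstacle; the content of the lemma is almost entirely the bookkeeping just sketched.
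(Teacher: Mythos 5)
Your proof is correct, and the change-of-basis argument via $\GL_d(\Fp)$ is the natural and clean way to establish well-definedness; the paper itself does not supply a proof but merely cites the proof of Lemma 8.1 of \cite{GLS14}, so there is nothing to compare against line by line. Two small remarks are worth making. First, it is enough (and cleaner) to phrase the change-of-basis inequality as you did: since the transition matrix $(c_{ij})\in\GL_d(\Fp)$ has all nonzero entries of $v_R$-value $0$, the formula $y_i=\sum_j c_{ij}y'_j$ gives $v_R(y_i)\geq\min_{j:\,c_{ij}\neq 0}v_R(y'_j)\geq\min_j v_R(y'_j)$, hence $\min_i v_R(y_i)\geq\min_j v_R(y'_j)$, and symmetry gives equality. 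Second, a caveat on terminology: $R\otimes_{\Fp}k_E$ is not an integral domain --- since $k_E\subset\overline{\Fp}\subset R$ and $k_E/\Fp$ is separable, $R\otimes_{\Fp}k_E\cong\prod_{\sigma:k_E\hookrightarrow\overline{\Fp}}R$ has nontrivial idempotents --- so the multiplicative axiom of a genuine valuation fails: one can have $v_R(\zeta)=v_R(\zeta')=0$ while $\zeta\zeta'=0$ and hence $v_R(\zeta\zeta')=\infty$. Your argument correctly proves only the submultiplicative inequality $v_R(\zeta\zeta')\geq v_R(\zeta)+v_R(\zeta')$, which together with the ultrametric inequality, the scaling identity $v_R(r\zeta)=v_R(r)+v_R(\zeta)$ for $r\in R$, and non-degeneracy is exactly what is true and all that is used in the sequel (e.g.\ in the bounds of the form $v_R(\cdot)\geq p^2/(p-1)$). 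It would be worth stating this explicitly so a reader is not misled into expecting $v_R$ to be a valuation in the strict sense.
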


\begin{lemma} \label{taushape}
With notations in \textbf{(CRYS)}, and suppose $\barrho$ is upper triangular. Then $\barhatm$ is upper triangular, and there exists a basis $\{e_{s, i}\}$ for $\barhatm$, such that $\tau(\bolde_s)=\bolde_s Z_s$, and for each $s$, the matrix $Z_s$ satisfy:
\begin{itemize}
  \item $Z_s =(z_{s, i, j}) \in \Mat(\Rhat/p\Rhat\otimes_{\Fp}k_E) \subset \Mat(R\otimes_{\Fp}k_E)$ is upper triangular.
  \item The diagonal elements satisfy $v_R(z_{s, i, i}-1) \geq \frac{p^2}{p-1}, \forall i$.
  \item The elements on the upper right corner satisfy $v_R(z_{s, i, j}) \geq \frac{p^2}{p-1}, \forall i<j$.
\end{itemize}
\end{lemma}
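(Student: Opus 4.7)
The plan is to combine two inputs: the upper triangularity of $\barhatm$ as a $(\varphi,\Ghat)$-module, transferred from $\barrho$ through the anti-equivalence $\That$, and the crystalline criterion of Theorem~\ref{Ozeki}, which controls the $\tau$-action modulo $\omega_E$ with valuation bound $p^2/(p-1)$.

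First I would argue that $\barhatm\in\mathcal{E}(\barhatn_d,\ldots,\barhatn_1)$ in $\Mod_{\huaS_{k_E}}^{\varphi,\Ghat}$, where $\barhatn_i$ is the canonical rank-$1$ $(\varphi,\Ghat)$-module with $\That(\barhatn_i)\simeq\barchi_i$ supplied by Lemma~\ref{rank1}(4). This is obtained from the upper triangular filtration of $\barrho$ via Theorem~\ref{hatMcoeff}(1),(3): each $\barchi_i$ admits a crystalline $\OE$-lift, these are assembled inductively through the anti-equivalence on the $\OE$-side into a filtration of $\hatm$ matching $T$, and that filtration descends to $\barhatm$ upon reduction modulo $\omega_E$. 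Passing to underlying Kisin modules reproduces the filtration of $\barm$ from Proposition~\ref{diagonal}. I then choose a basis $\bolde_s=(e_{s,1},\ldots,e_{s,d})$ of $\barm_s$ adapted to this filtration, as in Proposition~\ref{extKisin}; then $\{1\otimes e_{s,i}\}$ is an $\Rhat_{k_E}$-basis of $\barhatm_s=\Rhat_{k_E}\otimes_{\varphi,\huaS_{k_E}}\barm_s$ adapted to the induced $(\varphi,\Ghat)$-filtration. Because $\tau$ preserves every step of this filtration, the matrix $Z_s$ of the $\tau$-action is automatically upper triangular with entries in $\Rhat/p\Rhat\otimes_{\Fp}k_E$.

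Second, the valuation bounds come from reducing Theorem~\ref{Ozeki} modulo $\omega_E$. Since $\rho$ is crystalline,
\[
(\tau-1)(\huaM)\subseteq u^{p}\varphi(\huat)\,\bigl(W(R)_{\OE}\otimes_{\varphi,\huaS_{\OE}}\huaM\bigr).
\]
Picking any $\huaS_{\OE}$-basis of $\huaM_s$ lifting $\bolde_s$ and reducing modulo $\omega_E$, the identity $\tau(\bolde_s)=\bolde_s Z_s$ forces $z_{s,i,i}-1$ and, for $i<j$, $z_{s,i,j}$ to lie in $u^{p}\varphi(\huat)(R\otimes_{\Fp}k_E)$. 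Using $v_R(u)=1$ (since $K/\Qp$ is unramified), and $v_R(\varphi(\huat))=v_R(t)=p/(p-1)$ (from $t=\lambda\varphi(\huat)$ with $\lambda\in S^{\times}$ and the standard computation $v_R([\underline\epsilon]-1)=p/(p-1)$), we get
\[
v_R\bigl(u^{p}\varphi(\huat)\bigr)=p+\tfrac{p}{p-1}=\tfrac{p^{2}}{p-1},
\]
which is the required lower bound on the diagonal entries minus $1$ and on the off-diagonal entries above the diagonal.

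The main delicate point I expect is the first step, namely rigorously transferring the filtration of $\barrho$ to a $(\varphi,\Ghat)$-module filtration of $\barhatm$ whose rank-$1$ successive quotients are exactly the $\barhatn_i$ from Lemma~\ref{rank1}(4); this relies crucially on full faithfulness of $\That$ on the $\OE$-side (Theorem~\ref{hatMcoeff}(3)) together with the mod-$\omega_E$ compatibility (Theorem~\ref{hatMcoeff}(1)), and on matching the resulting filtration with the Kisin-side filtration of Proposition~\ref{diagonal}. Once this compatibility is in place, the matrix assertions reduce to the direct calculation above.
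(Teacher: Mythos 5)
Your overall plan matches the route the paper takes (its proof simply cites Lemma 8.1 of \cite{GLS14}, which handles $d=2$ by exactly this argument): establish upper triangularity of $\barhatm$ as a $(\varphi,\Ghat)$-module, then reduce the criterion $(\tau-1)(\huaM)\subseteq u^{p}\varphi(\huat)\,W(R)_{\OE}\otimes_{\varphi,\huaS_{\OE}}\huaM$ of Theorem~\ref{Ozeki} modulo $\omega_E$. The valuation bookkeeping is right: in the unramified case $v_R(\underline\pi)=1$, the relation $\varphi(\huat)=c_0^{-1}E(u)\huat$ gives $(p-1)v_R(\huat\bmod p)=v_R(E(u)\bmod p)=1$, so $v_R(\varphi(\huat)\bmod p)=p/(p-1)$ and $v_R(u^p\varphi(\huat)\bmod p)=p+p/(p-1)=p^2/(p-1)$, and this bounds every entry of $Z_s-\mathrm{Id}$.

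There is, however, a genuine gap in the way you propose to get the first step. You suggest lifting each $\barchi_i$ to a crystalline $\OE$-character, assembling these "into a filtration of $\hatm$ matching $T$," and reducing modulo $\omega_E$. This cannot work as written: $T$, and hence the integral $(\varphi,\Ghat)$-module $\hatm$, need not admit any upper triangular filtration — the fact that $\barrho$ is upper triangular while $T$ generally is not is precisely the phenomenon the whole paper is about — so there is nothing to reduce. The upper triangular structure on $\barhatm$ must be produced directly in the torsion category: the $G_K$-stable filtration of $\barrho$ transfers to a filtration of $\barhatm$ by sub-$(\varphi,\Ghat)$-modules via full faithfulness and the appropriate exactness of the torsion functor $\That$, i.e., the torsion results of Ozeki that are already used at the Kisin-module level in Proposition~\ref{diagonal} (Lemma 4.4 of \cite{Oze13}) together with their $(\varphi,\Ghat)$-analogues, not by passing through a nonexistent filtration at the $\OE$-level. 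Once that is fixed, the remainder of your argument is exactly the higher-rank extension of GLS14 Lemma 8.1 that the paper intends.
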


\begin{defn} \label{shapeext}
Suppose $\barn_i, \barn_j'$ (resp. $\barhatn_i, \barhatn_j$) are rank 1 modules in $\Mod_{\huaS_{k_E}}^{\varphi}$ (resp. $\Mod_{\huaS_{k_E}}^{\varphi, \Ghat}$) for $1\leq i \leq d, 1 \leq j \leq d'$.
\begin{enumerate}
\item Let $\mathcal E_{\varphi-\rm shape}(\barn_d, \ldots, \barn_1) \subset \mathcal E(\barn_d, \ldots, \barn_1)$ be the subset consisting of elements $\barm$ such that there exists a basis $\bolde_s$ of $\barm_s$, $\varphi(\bolde_{s-1})=\bolde_sA_s$, and $A_s$ is of the shape in Proposition \ref{shape} for each $s$.

\item Suppose $\barm \in \mathcal E_{\varphi-\rm shape}(\barn_d, \ldots, \barn_1), \overline{\huaM'} \in \mathcal E_{\varphi-\rm shape}(\barn_{d'}', \ldots, \barn_{1}').$
      Define
      $\Ext_{\varphi-\rm shape}(\barm, \barm') \subseteq \Ext(\barm, \barm')$, where $\tilde x \in \Ext(\barm, \barm')$ is in $\Ext_{\varphi-\rm shape}(\barm, \barm')$ if there exists a representative of $\tilde x$:
       $0 \to\barm'  \to \barn \to \barm \to 0,$ such that $\overline{\huaN} \in \mathcal E_{\varphi-\rm shape}(\barn_d, \ldots, \barn_1, \barn_{d'}', \ldots, \barn_1').$

\item  Let
$\mathcal E_{\vtshape}(\barhatn_d, \ldots, \barhatn_1) \subset \mathcal E(\barhatn_d, \ldots, \barhatn_1)$
 be the subset of consisting of elements $\barhatm$ such that there exists a basis $\bolde_s$ of $\barm_s$ such that
  \begin{itemize}
    \item $\varphi(\bolde_{s-1})=\bolde_s A_s$ where $A_s, \forall s$ is of the shape in Proposition \ref{shape}.
    \item $\tau(1\otimes_{\varphi}\bolde_{s})=(1\otimes_{\varphi}\bolde_{s}) Z_s$, where $Z_s, \forall s$ is of the shape in Lemma \ref{taushape}.
  \end{itemize}

    \item  Define $\Ext_{\vtshape}(\barhatm, \barhatm')$ similarly as (2).
\end{enumerate}
\end{defn}

\begin{prop} \label{shapesubmodule}
With notations in Definition \ref{shapeext}, we have the following.
  \begin{enumerate}
   \item $\Ext_{\vshape}(\barm, \barm')$ is a sub-vector space of $\Ext(\barm, \barm')$.
   \item  $\Ext_{\vtshape}(\barhatm, \barhatm')$ is a sub-vector space of $\Ext(\barhatm, \barhatm').$
  \end{enumerate}
\end{prop}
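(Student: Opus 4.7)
The plan is to translate the question into a matrix computation using the parameterization of $\Ext(\barm, \barm')$ developed in the proof of Proposition \ref{modulestr}. Fix once and for all bases $\bolde^{(0)}_s$ of $\barm$ and $\bolde^{\prime (0)}_s$ of $\barm'$ realizing the $\varphi$-shape, with corresponding Frobenius matrices $A^{(0)}_s$ and $A^{\prime (0)}_s$. Then $\Ext(\barm, \barm')$ is identified with the quotient $M/M_0$, where $M$ is the $k_E$-module of tuples $\{C_s\}$ of $d' \times d$ matrices over $k_E\llb u \rrb$ and the equivalence is $\{C_s\} \sim \{C_s + A^{\prime (0)}_s \varphi(W_{s-1}) - W_s A^{(0)}_s\}$ for some $\{W_s\}$.

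Let $T \subseteq M$ consist of tuples whose entry $(C_s)_{i, k}$ (with $1 \leq i \leq d'$, $1 \leq k \leq d$) has the form $u^{t'_{i, s}} y_{s, i, k}$ with $y_{s, i, k} = 0$ if $t_{k, s} < t'_{i, s}$ and $y_{s, i, k} \in k_E$ if $t_{k, s} > t'_{i, s}$. These being $k_E$-linear conditions, $T$ is a $k_E$-subspace of $M$, and its image $S$ in $\Ext(\barm, \barm')$ is a $k_E$-subspace. Any representative in $T$ gives an extension whose matrix $\begin{pmatrix} A^{\prime (0)}_s & C_s \\ 0 & A^{(0)}_s \end{pmatrix}$ is of the shape of Proposition \ref{shape}, so $S \subseteq \Ext_{\vshape}(\barm, \barm')$; thus it suffices to prove the reverse inclusion.

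For $\Ext_{\vshape}(\barm, \barm') \subseteq S$: given a class represented by $\barn$ with shape basis $(\bolde^{\prime \ast}_s, \bolde^{\ast}_s)$ and shape matrix $\begin{pmatrix} A^{\prime \ast}_s & C^{\ast}_s \\ 0 & A^{\ast}_s \end{pmatrix}$, apply a block-diagonal change of basis $R_s = \diag(P_s, Q_s)$ that turns the diagonal blocks into the prescribed $A^{\prime (0)}_s$ and $A^{(0)}_s$. Since both $A^{\prime \ast}_s$ and $A^{\prime (0)}_s$ share the diagonal $(a'_i)_s u^{t'_{i, s}}$ (and likewise for the $\barm$-side), one checks from the equation $P_s^{-1} A^{\prime \ast}_s \varphi(P_{s-1}) = A^{\prime (0)}_s$ that $P_s$ is forced to be upper triangular with $s$-independent diagonal entries in $k_E^\times$, and similarly for $Q_s$. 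An entrywise analysis, using this structure and absorbing the non-shape remainders of $P_s^{-1} C^{\ast}_s \varphi(Q_{s-1})$ into coboundaries $A^{\prime (0)}_s \varphi(V_{s-1}) - V_s A^{(0)}_s$, then shows that this tuple represents an element of $T$ modulo the equivalence. This entrywise verification, which relies on the fact that in the shape form the off-diagonal entries are concentrated at very specific powers of $u$, is the main technical obstacle.

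Part (2) follows the same strategy, based on the analogue of Proposition \ref{modulestr}(2): $\Ext(\barhatm, \barhatm')$ is parameterized by pairs $(\{C_s\}, \{Y_s\})$, where $Y_s$ records the $\tau$-action and is subject to the $v_R$-bounds of Lemma \ref{taushape}. These bounds, being lower-bound conditions on the valuations of the entries, define a further $k_E$-linear subspace in the enlarged parameter space; combined with the $\varphi$-shape condition on the $C_s$-block, we obtain a $k_E$-subspace whose image in $\Ext(\barhatm, \barhatm')$ is precisely $\Ext_{\vtshape}(\barhatm, \barhatm')$ by the same normalization argument, since the block-diagonal changes of basis involved respect both the Frobenius shape and the valuation structure governing the $\tau$-action.
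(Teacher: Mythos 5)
Your proof proposal follows the same strategy as the paper's: identify $\Ext(\barm, \barm')$ with the quotient $\tilde M = M/M_0$ from the proof of Proposition \ref{modulestr}, define the $k_E$-linear subspace $T$ of tuples satisfying the shape constraints (this linearity is the point of Proposition \ref{shape}), and identify $\Ext_{\vshape}(\barm, \barm')$ with the image of $T$ in $\tilde M$. Where you diverge is that you explicitly attempt to justify the identification $\Ext_{\vshape} = \mathrm{image}(T)$ rather than asserting it. The inclusion $\mathrm{image}(T) \subseteq \Ext_{\vshape}$ is indeed immediate. For the reverse inclusion, your normalization analysis is correct as far as it goes (a shape basis of the middle term $\barn$, assuming it is compatible with the extension filtration, restricts/projects to shape bases of $\barm'$ and $\barm$, and a block-diagonal change of basis with $P_s, Q_s$ upper triangular and $s$-independent constant diagonal transforms the diagonal blocks into the fixed $A_s^{\prime(0)}, A_s^{(0)}$), but the crucial entrywise verification that $P_s^{-1}C_s^\ast\varphi(Q_{s-1})$ lands in $T$ modulo coboundaries $W_sA_s^{(0)} - A_s^{\prime(0)}\varphi(W_{s-1})$ is only asserted; as you yourself flag, this is the real content and it is not carried out. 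You also tacitly assume the shape basis of $\barn$ is compatible with the extension filtration, which uses uniqueness of the filtration (a consequence of the no-nonzero-morphism hypothesis ensured by Remark \ref{3nothappen}, not stated in Definition \ref{shapeext} itself).

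It is only fair to add that the paper's own one-sentence proof does not address this reverse inclusion either: it simply states that $\Ext_{\vshape}$ "corresponds to" the set of classes with a matrix representative of the shape, with a remark only that non-uniqueness of the representative is harmless. Under a reading of Definition \ref{shapeext}(2) in which the shape basis of the middle module is required to extend the fixed shape bases of $\barm'$ and $\barm$, the identification is tautological and your normalization step is superfluous; under the literal reading (arbitrary shape basis of $\barn$), the reverse inclusion is a genuine lemma that neither the paper nor your proposal proves. So the gap you identify is real, your approach to filling it is the natural one, but it is incomplete; the paper sidesteps the issue entirely.
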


\begin{proof}
For (1), from the proof of Proposition \ref{modulestr}, $\Ext(\barm, \barm')$ is bijective with some vector space $\tilde M$ (the definition of $\tilde M$ is obvious, which we omit), and $\Ext_{\varphi-\rm shape}(\barm, \barm')$ correspond to the subset of ${\tilde M}$ consisting of elements $\tilde x \in \tilde M$ which has a representative of the shape in Proposition \ref{shape}, and these elements clearly form a sub-vector space. Note that the existence of the representative is not necessarily unique, but it does not affect our result.
The proof of (2) is similar (also note that $v_R$ is a valuation).
\end{proof}

\begin{rem}
As we mentioned earlier in Remark \ref{rem: upper bound of shape}, our Proposition \ref{shapesubmodule} will give an upper bound for the shape of upper triangular Kisin modules with $k_E$-coefficient that we are studying. Indeed, the $k_E$-dimension of $\Ext_{\vshape}(\barm, \barm')$ will be precisely equal to $d_{\cris}(\barm, \barm')$ as in Definition \ref{dimNek}. See also the arguments in Theorem \ref{lifting-1}.
\end{rem}

\subsection{Extension of representations}

Now we consider extension of representations.
Let $H$ be a topological group, and let $A$ be a topological ring with trivial $H$-action. Let $L_1, L_2$ be two finite free $A$-modules with continuous $A$-linear $H$-action. Define $\Ext_H(L_2, L_1)$ to be the set of short exact sequences of finite free continuous $H$-representations over $A$, $0 \to L_1 \to N \to L_2 \to 0,$ modulo the obvious equivalence relations.
Then $\Ext_H(L_2, L_1)$ is in bijection with the continuous group cohomology $H^1 (H, \Hom_A(L_2, L_1) )$, which has a natural $A$-module structure because $\Hom_A(L_2, L_1)$ is an $A$-module.

Now let $T$ be an $\mathcal O_E$-representation of the Galois group $G_K$. Let $V= T\otimes_{\mathcal O_E} E$, and $\overline{T}=T/\omega_ET$. We have the natural $\mathcal O_E$-linear map
$\eta:H^1(G_K, T) \to H^1(G_K, V)$, and $\theta: H^1(G_K, T) \to H^1(G_K, \overline{T}).$ In the following, we list a few easy facts.

\begin{lemma}\hfill
\begin{enumerate}
\item The map $H^1(G_K, T)/\omega_EH^1(G_K, T) \to H^1(G_K, \overline T)$ is injective.
  \item The kernel of $\eta:H^1(G_K, T) \to H^1(G_K, V)$ is $H^1(G_K, T)_{\rm{tor}}$ which is the submodule consisting of elements killed by a power of $\omega_E$.

  \item $H^1(G_K, T)\otimes_{\mathcal O_E} E \simeq (H^1(G_K, T)/H^1(G_K, T)_{\rm tor})\otimes_{\mathcal O_E} E \simeq H^1(G_K, V).$

\end{enumerate}
\end{lemma}

\begin{defn}
\begin{enumerate}
  \item Suppose $V$ is a crystalline representation, then let
  $$H^1_{f}(G_K, V) : = \Ker (H^1(G_K, V) \to H^1(G_K, V\otimes_{\Qp}B_{\rm{cris}})),$$
      which is an $E$-vector space that classifies crystalline extension classes (see e.g. \cite[\S.1.12]{Nek93}).

  \item Suppose $V$ is a semi-stable representation, then let
  $$H^1_{g}(G_K, V) : = \Ker (H^1(G_K, V) \to H^1(G_K, V\otimes_{\Qp}B_{\rm{st}})),$$
      which is an $E$-vector space that classifies semistable extension classes (see e.g. \cite[\S. 1.12]{Nek93}).

  \item Let $H^1_{f}(G_K, T): = \eta^{-1}(H^1_{f}(G_K, V))$, which is an $\mathcal O_E$-module.

     \item Let $H^1_{g}(G_K, T): = \eta^{-1}(H^1_{g}(G_K, V))$, which is an $\mathcal O_E$-module.

  \end{enumerate}
\end{defn}

By the above definition, we can define
\begin{defn}
\begin{enumerate}
\item Let $V_1, V_2$ be two crystalline $E$-representations of $G_K$, then define
$$\Ext_{\rm{cris}}(V_2, V_1): = H_f^1 (G_K, \Hom_E(V_2, V_1)).$$

\item Let $T_1, T_2$ be two crystalline $\mathcal O_E$-representations of $G_K$, then define
$$\Ext_{\rm{cris}}(T_2, T_1): = H_f^1 (G_K, \Hom_{\mathcal O_E}(T_2, T_1)).$$

  \item Define $\Ext_{\rm{st}}(V_2, V_1)$ and $\Ext_{\rm{st}}(T_2, T_1)$ similarly.

\end{enumerate}
\end{defn}

\subsection{From extension of modules to extension of representations.}
In this subsection, we establish the relation between the extensions studied in the previous two subsections.

\begin{prop}[\cite{GLS14+}] \label{exttorep}
\begin{enumerate}
   \item For $\huaL', \huaL \in \Mod_{\huaS_{\mathcal O_E}}^{\varphi}$, $\Ext (\huaL', \huaL) \to \Ext_{G_{\infty}}( T_{\huaS}(\huaL), T_{\huaS}(\huaL')  )$ is a homomorphism of $\mathcal O_E$-modules.

  \item For $\hat\huaL', \hat\huaL \in \Mod_{\huaS_{\mathcal O_E}}^{\varphi, \Ghat}$, $\Ext (\hat\huaL', \hat\huaL) \to \Ext_{G_K}( \hat T(\hat\huaL), \hat T(\hat\huaL')  )$ is a homomorphism of $\mathcal O_E$-modules.

  \item Suppose $p>2$, and both $\hatL', \hatL \in \Mod_{\huaS_{\mathcal O_E}}^{\varphi, \Ghat}$ are crystalline, then $\Ext_{\rm{cris}} (\hat\huaL', \hat\huaL) \to \Ext_{\rm{cris}}( \hat T_{\huaS}(\hat\huaL), \hat T_{\huaS}(\hat\huaL')  )$ is a homomorphism of $\mathcal O_E$-modules.

  \item For $\overline{\huaL'}, \overline{\huaL} \in \Mod_{\huaS_{k_E}}^{\varphi}$,
$\Ext (\overline{\huaL'}, \overline{\huaL}) \to \Ext_{G_{\infty}}( T_{\huaS}(\overline{\huaL}), T_{\huaS}(\overline{\huaL}')  )$ is a homomorphism of $k_E$-vector spaces.

   \item For $\overline{\hat\huaL'}, \overline{\hat\huaL} \in \Mod_{\huaS_{k_E}}^{\varphi, \Ghat}$, $\Ext (\overline{\hat\huaL'}, \overline{\hat\huaL}) \to \Ext_{G_K}(\hat T(\overline{\hat\huaL}), \hat T(\overline{\hat\huaL}')  )$ is a homomorphism of $k_E$-vector spaces.

\end{enumerate}
\end{prop}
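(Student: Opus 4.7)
All five statements share the same structure. The realization functors $T_{\huaS}$, $\hat T$, and their $k_E$-coefficient and crystalline analogues are $\mathcal O_E$-linear (resp.\ $k_E$-linear) and exact, by Theorems \ref{huaMcoeff} and \ref{hatMcoeff}. Applying such a contravariant exact functor to a short exact sequence $0 \to \huaL \to \huaN \to \huaL' \to 0$ yields a short exact sequence of continuous Galois representations, and equivalent extensions in the source go to equivalent extensions in the target; this gives the well-defined maps of sets in all five cases.

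To upgrade each map to a module homomorphism, I would identify the matrix-based $\mathcal O_E$-structure on $\Ext(\huaL', \huaL)$ from the proof of Proposition \ref{modulestr} with the standard Baer sum and scalar-multiplication operations on extension classes. Given two extensions represented by matrices $\bigl(\begin{smallmatrix} A & C_i \\ 0 & A' \end{smallmatrix}\bigr)$, a direct computation with the pullback-along-diagonal of pushout-along-codiagonal definition of Baer sum (using that in the quotient of $\huaN_1 \times_{\huaL'} \huaN_2$ by $\{(x,-x) : x \in \huaL\}$ one has $(x,0) \sim (0,x)$) shows the Baer sum is represented by $\bigl(\begin{smallmatrix} A & C_1+C_2 \\ 0 & A' \end{smallmatrix}\bigr)$, and the pushout along $a \cdot \Id_{\huaL}$ for $a \in \mathcal O_E$ is represented by $\bigl(\begin{smallmatrix} A & aC \\ 0 & A' \end{smallmatrix}\bigr)$. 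These are exactly the operations defined on the set $\tilde M$ in the proof of Proposition \ref{modulestr}, so the two $\mathcal O_E$-structures coincide. The same identification, with the extra $\tau$-data carried along in parallel, handles the $(\varphi,\Ghat)$ and $k_E$-coefficient cases.

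On the representation side, the $\mathcal O_E$-module structure on $\Ext_{G_K}(T,T') \simeq H^1(G_K, \Hom_{\mathcal O_E}(T,T'))$ is again Baer sum / scalar multiplication by $a \cdot \Id$. Since Baer sum and scalar multiplication by endomorphisms are categorical constructions preserved by any additive functor, and $T_{\huaS}$ and $\hat T$ are exact and $\mathcal O_E$-linear, the induced maps on Ext sets are automatically $\mathcal O_E$-module (resp.\ $k_E$-vector space) homomorphisms. For statement (3), I additionally need crystalline extensions of crystalline $(\varphi, \Ghat)$-modules to map to crystalline extensions of Galois representations: by Theorem \ref{Ozeki} the central object of a crystalline $\Ext$-class is itself a crystalline $(\varphi,\Ghat)$-module, and by Theorem \ref{hatMcoeff}(3) its image under $\hat T$ is a lattice in a crystalline representation, placing the class in $H^1_f$.

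The only step requiring actual computation is the matrix-level verification that Baer sum corresponds to addition of $C$ and scalar multiplication comes from pushout along $a \cdot \Id_{\huaL}$; everything else is formal functoriality. The $(\varphi, \Ghat)$ and crystalline cases add no new difficulty, since the extra structure ($\tau$-matrices, the Ozeki condition) is preserved by the same direct-sum and pushout constructions already used in the proof of Proposition \ref{modulestr}.
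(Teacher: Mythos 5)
Your approach is correct in outline but genuinely different from the paper's. The paper does not invoke Baer sum at all: it fixes bases and, for an extension with matrix $\bigl(\begin{smallmatrix} A & C \\ 0 & A' \end{smallmatrix}\bigr)$, writes down a Kisin-module section $s_C$, solves the matrix equation $\varphi(\alpha_{h,C}) = h(e)C + \alpha_{h,C}A'$ in $W(R)$ via Lemma 2.7 of \cite{Car13}, fixes a \emph{system of solutions linear in $h$} (possible since the lattice is finite free over $\mathcal O_E$), and sets $\alpha_{h,aC_1+bC_2}:=a\alpha_{h,C_1}+b\alpha_{h,C_2}$; it then checks directly that the resulting cocycle $g \mapsto g\circ F(h,C)-F(g\circ h, C)$ is $\mathcal O_E$-linear in $C$. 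This is concrete and sidesteps all categorical questions. Your route — identify the matrix $\mathcal O_E$-structure of Proposition \ref{modulestr} with Baer sum / pushout-along-$a\cdot\mathrm{Id}$, then invoke exact contravariant functoriality of $T_{\huaS}$ and $\hat T$ — is cleaner conceptually, but two points need care that your sketch elides. First, "preserved by any additive functor" is not enough: preservation of the Baer-sum pullback/pushout under a contravariant functor is exactly where you use exactness (kernel $\leftrightarrow$ cokernel), and you should say so rather than folding it into "additive". Second, $\Mod_{\huaS_{\mathcal O_E}}^{\varphi}$ is not abelian, so the categorical Baer sum and the pushout along $a\cdot\mathrm{Id}$ for non-unit $a$ are not automatically defined inside the category; it is precisely the matrix description (freeness of the quotient and the height-$r$ condition on the product $\bigl(\begin{smallmatrix} A & C \\ 0 & A' \end{smallmatrix}\bigr)\bigl(\begin{smallmatrix} B & D \\ 0 & B' \end{smallmatrix}\bigr)=E(u)^r\mathrm{Id}$) that certifies this, so your "direct computation" is load-bearing, not a formality. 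Once those two points are filled in, your argument works and is a reasonable alternative to the paper's explicit cocycle computation; you also correctly use Theorems \ref{Ozeki} and \ref{hatMcoeff}(3) to handle the crystalline case (3).
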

\begin{proof}
We only prove (1), the other statements can be proved similarly. We will freely use notations in the proof of Statement (1) of Proposition \ref{modulestr}. Let $e, e'$ be a fixed basis of $\huaL, \huaL'$ respectively. Suppose $\tilde x \in \Ext (\huaL', \huaL)$, take a representative of short exact sequence, $0 \to \huaL \to \huaN \to \huaL' \to 0,$
and take a representative
$   \left(
 \begin{array}{ccccc}
    A  & C \\
0 & A'
 \end{array}
\right) \in \tilde M,$
which corresponds to a section $s_C: \huaL' \to \huaN$.
We also get the corresponding short exact sequence
$T_{\huaS}(\tilde x): \quad 0 \to L' \to N \to L \to 0.$

Let $h \in L$, and set up the following matrix equation in $W(R)$, with $\alpha_{h, C}$ a row of indeterminates (in total $d'$ indeterminates, where $d'=\textnormal{rk}(\huaL')$):
$$\varphi(\alpha_{h, C}) = h(e) C + \alpha_{h, C} A'.$$
The equation is always solvable by \cite[Lem. 2.7]{Car13}, although the solution is not necessarily unique. However, since $L$ is finite free over $\mathcal O_E$, we can always fix a system of solutions which is linear with respect to $h$, i.e.,
$$\alpha_{ah_1+bh_2, C}= a\alpha_{h_1, C}+b\alpha_{h_2, C}, \forall a, b \in \mathcal O_E, h_1, h_2 \in L.$$

Now define an element $F(h, C) \in N$ such that

\[
F(h, C): \left\{
  \begin{array}{l l}
   e \mapsto h(e)\\
   s_C(e') \mapsto \alpha_{h, C}
  \end{array} \right.
  \]

The map $h \mapsto F(h, C)$ defines a section $L \to N$. Now define $c: G_{\infty} \to \Hom_{\mathcal O_E}(L, L')$ by
$$g \mapsto \{  h \mapsto  g\circ[F(h, C)]-F(g\circ h, C) \},$$
where $h \in L$, $g\circ $ is the Galois action (on $L$ or $N$). It is precisely the cocycle associated to the extension $0 \to L' \to N \to L \to 0$.

Now, take two extensions $\tilde x_1, \tilde x_2 \in \Ext(\hual', \hual)$, which correspond to $\left(
 \begin{array}{ccccc}
    A  & C_1 \\
0 & A'
 \end{array}
\right)$ and
$   \left(
 \begin{array}{ccccc}
    A  & C_2 \\
0 & A'
 \end{array}
\right)$ respectively. As above, we can fix $\alpha_{h, C_i}$ which are each linear with respect to $h$. And now define
$$\alpha_{h, aC_1+bC_2}:=  a\alpha_{h, C_1}+b\alpha_{h, C_2}.$$

We need to verify that the cocycle $c$ is ``additive" with respect to $C$, that is

\begin{multline*}
ag\circ[F(h, C_1)]-aF(g\circ h, C_1)+bg\circ[F(h, C_2)]-bF(g\circ h, C_2)\\
=g\circ[F(h, aC_1+bC_2)]-F(g\circ h, aC_1+bC_2).
\end{multline*}

Since both sides of the above formula are in $L'$, it suffices to verify their values on liftings of $e'$.

$F(g\circ h, aC_1+bC_2)$ is linear by our definition of $\alpha_{h, aC_1+bC_2}$. That is,
$$ F(g\circ h, aC_1+bC_2) |_{s_{aC_1+bC_2}(e')} = aF(g\circ h, C_1)|_{s_{C_1}(e')} + bF(g\circ h, C_2)|_{s_{C_2}(e')}.
$$

To verify on $\{g\circ[F(h, aC_1+bC_2)]\}|_{s_{aC_1+bC_2}(e')}$, just note that $L'$ is an $G_{\infty}$-invariant subspace in $N$, and so
$$\{g\circ[F(h, aC_1+bC_2)]\}|_{s_{aC_1+bC_2}(e')} = g\circ \{[F(h, aC_1+bC_2)] |_{s_{aC_1+bC_2}(e')}\}.
$$
\end{proof}

In order to prove our next proposition, we need to briefly recall some notations.

Let $S_{K_0}:= S\otimes_{W(k)}K_0$ and let $\Fil^i S_{K_0}:= \Fil^i S\otimes_{W(k)}K_0$.
Let $\bigMF$ be the category whose objects are finite free $S_{K_0}$-modules $\D$ with:
\begin{enumerate}
 \item a $\varphi_{S_{K_0}}$-semi-linear morphism $\varphi_{\D}: \D \to \D$ such that the determinant of $\varphi_{\D}$ is invertible in $S_{K_0}$;
 \item a decreasing filtration $\{\Fil^i\D\}_{i \in \Z}$ of $S_{K_0}$-submodules of $\D$ such that $\Fil^0\D=\D$ and $\Fil^i S_{K_0} \Fil^j \D \subseteq \Fil^{i+j}\D$;
 \item a $K_0$-linear map $N: \D \to \D$ such that $N(fm)=N(f)m+fN(m)$ for all $f\in S_{K_0}$ and $m \in \D$, $N\varphi=p \varphi N$ and $N (\Fil^i \D) \subseteq \Fil^{i-1}\D$.
\end{enumerate}
Morphisms in the category are $S_{K_0}$-linear maps preserving filtrations and commuting with $\varphi$ and $N$. And we can naturally define short exact sequences in the category.

We denote $\MF$ the category of filtered $(\varphi, N)$-modules, and $\MFwa$ the subcategory of weakly admissible modules. The definitions of these categories are omitted, and can be found, e.g., in \cite[\S 6.1]{Bre97}.

\begin{thm}[{ \cite[Thm 6.1.1]{Bre97}, \cite[Cor. 3.2.3]{Liu08}   }]  \label{bigMF}\hfill
\begin{enumerate}
  \item There is a functor:
  $\D: \MF \to \bigMF,$
  which is an equivalence of categories. And the equivalence and its inverse are both exact.
  \item Let $\bigMFwa$ denote the essential image of the functor $\D$ restricted to $\MFwa$, then $\D$ induces an equivalence of categories:
      $$\D: \MFwa \to \bigMFwa,$$
      and the equivalence and its inverse are both exact.
  \item With notations in Statement (3) of Theorem \ref{huaMcoeff}, suppose $D$ is the filtered $(\varphi, N)$-module associated to $V$, then there is a canonical isomorphism
   $$S_{K_0}\otimes_{\varphi, \huaS}\huaM \simeq S_{K_0}\otimes_{K_0}D \simeq \D(D),$$
   which is compatible with $\varphi, N$ and filtrations on both sides (we omit the definitions of these filtrations).
\end{enumerate}
\end{thm}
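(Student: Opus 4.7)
The plan is to follow Breuil and Liu as cited, without reproving their results from scratch. For Part (1), I would define $\D$ on objects by $\D(D) = S_{K_0} \otimes_{K_0} D$, equipping it with the diagonal Frobenius $\varphi(s \otimes d) = \varphi_{S_{K_0}}(s) \otimes \varphi_D(d)$, the monodromy prescribed by the Leibniz rule $N(s \otimes d) = N(s) \otimes d + s \otimes N_D(d)$, and the filtration constructed inductively from $\Fil^i D_K$ (where $D_K = K \otimes_{K_0} D$) via Griffiths transversality with respect to $\Fil^\bullet S_{K_0}$. The quasi-inverse sends an object of $\bigMF$ to $\D/(\Fil^1 S_{K_0})\D$, endowed with the induced filtration, Frobenius, and monodromy. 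Full faithfulness and essential surjectivity then reduce to checking that these constructions are mutually inverse, which amounts to an explicit linear-algebra verification on how $\Fil^\bullet D_K$ determines $\Fil^\bullet \D$ uniquely. Exactness in both directions follows from flatness of $S_{K_0}$ over $K_0$, together with the compatibility of the inductive filtration construction with short exact sequences.

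For Part (2), once Part (1) is in hand, the statement becomes essentially tautological after verifying that weak admissibility is characterized by an intrinsic condition on $\D(D)$ rather than only on $D$ itself. Defining $\bigMFwa$ as the essential image, $\D$ then restricts to an equivalence $\MFwa \to \bigMFwa$, and exactness in both directions is inherited directly from Part (1).

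For Part (3), the second isomorphism $S_{K_0}\otimes_{K_0}D \simeq \D(D)$ is simply the definition of $\D$ on objects. The first isomorphism $S_{K_0}\otimes_{\varphi,\huaS}\huaM \simeq S_{K_0}\otimes_{K_0}D$ is Liu's Corollary 3.2.3; the underlying idea is that both sides classify the same semi-stable $G_K$-representation $V$, one by passing through Kisin's theory (which captures the $G_\infty$-action on the lattice $T$) and the other through Fontaine's classical theory (which captures the crystalline/semi-stable structure after inverting $p$). The main obstacle in Liu's argument is tracking compatibility of the filtrations after tensoring $\huaM$ up to $S_{K_0}$, which requires explicit computations involving divided powers and the element $\huat$ introduced in the notation section. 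In the present paper the theorem is invoked as a black box, so no further verification is carried out here.
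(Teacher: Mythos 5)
Your approach matches the paper's exactly: the result is stated as a direct citation to Breuil's Theorem 6.1.1 and Liu's Corollary 3.2.3, with no independent proof supplied, and you correctly identify it as a black-box invocation. Your gloss of the constructions is serviceable (though the quasi-inverse in Breuil's framework is not literally reduction modulo $\Fil^1 S_{K_0}$, since $\varphi$ does not preserve that ideal --- one recovers $D$ via the specialization $u \mapsto 0$, which is $\varphi$-compatible because $\varphi(u) = u^p$), but since the paper offers no proof to compare against, these details don't affect the assessment.
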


\begin{prop}[\cite{GLS14+}] \label{tensorE}
Suppose $p>2$. Let $\hat \huaL, \hat \huaL' \in \Mod_{\huaS_{\mathcal O_E}}^{\varphi, \Ghat}$, $L=\hat T(\hat \huaL), L'=\hat T(\hat \huaL') $, and $V=L \otimes_{\mathcal O_E} E, V'=L' \otimes_{\mathcal O_E} E$.
Then the following hold.
\begin{enumerate}
  \item We have the following isomorphisms of $E$-vector spaces:
  $$E\otimes_{\mathcal O_E} \Ext(\hat \huaL', \hat \huaL) \simeq E\otimes_{\mathcal O_E}\Ext_{\rm{st}}(L, L') \simeq\Ext_{\rm{st}}(V, V').$$

  \item If both $V, V'$ are furthermore crystalline, then we have
   $$E\otimes_{\mathcal O_E} \Ext_{\rm{cris}}(\hat \huaL', \hat \huaL) \simeq E\otimes_{\mathcal O_E}\Ext_{\rm{cris}}(L, L') \simeq\Ext_{\rm{cris}}(V, V').$$

\end{enumerate}
\end{prop}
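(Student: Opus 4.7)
The plan is to prove each of the two isomorphisms in statement (1) separately, then obtain statement (2) by running the analogous argument with ``crystalline'' in place of ``semistable'' throughout.

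For the leftmost isomorphism $E\otimes_{\mathcal O_E}\Ext(\hat \huaL', \hat \huaL) \simeq E\otimes_{\mathcal O_E}\Ext_{\rm{st}}(L, L')$, I would first observe that the $\mathcal O_E$-linear map $\Ext(\hat \huaL', \hat \huaL) \to \Ext_{G_K}(L, L')$ from Proposition \ref{exttorep}(2) factors through $\Ext_{\rm{st}}(L, L')$: indeed, if $0 \to \hat \huaL \to \hat \huaN \to \hat \huaL' \to 0$ is an extension in $\Mod_{\huaS_{\mathcal O_E}}^{\varphi, \Ghat}$, the central term $\hat \huaN$ again lies in this category, so by Theorem \ref{hatMcoeff}(3) $\hat T(\hat \huaN)$ is a $G_K$-stable $\mathcal O_E$-lattice inside a semistable $E$-representation. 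I then claim that $\Ext(\hat \huaL', \hat \huaL) \to \Ext_{\rm{st}}(L, L')$ is already a bijection before inverting $\omega_E$: surjectivity uses the quasi-inverse of the anti-equivalence in Theorem \ref{hatMcoeff}(3) to lift any extension of lattices to an extension of $(\varphi, \Ghat)$-modules, while injectivity follows from the fully faithfulness of $\hat T$ applied to equivalences of extensions.

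For the rightmost isomorphism $E \otimes_{\mathcal O_E} \Ext_{\rm{st}}(L, L') \simeq \Ext_{\rm{st}}(V, V')$, I would invoke the lemma stated just before the definition of $\Ext_{\rm{st}}$ with $\Hom_{\mathcal O_E}(L, L')$ and $\Hom_E(V, V')$ playing the roles of $T$ and $V$, yielding $H^1(G_K, \Hom_{\mathcal O_E}(L, L')) \otimes_{\mathcal O_E} E \simeq H^1(G_K, \Hom_E(V, V'))$. Surjectivity of the induced map on semistable parts follows because any $x \in H^1_{\rm{st}}(G_K, \Hom_E(V, V'))$ admits some $\omega_E^n x$ lying in $H^1(G_K, \Hom_{\mathcal O_E}(L, L'))$, and this multiple remains semistable, hence by definition lies in $\Ext_{\rm{st}}(L, L')$. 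Injectivity follows because the kernel of $\eta$ is the torsion submodule of $H^1$, which is killed by $E \otimes_{\mathcal O_E}(-)$. Statement (2) then results from the same two-step argument restricted to the crystalline subcategory: Theorem \ref{Ozeki} together with Remark \ref{remarkOzeki} guarantees that the anti-equivalence restricts to a bijection between crystalline $(\varphi, \Ghat)$-modules and lattices in crystalline $E$-representations, so the factorization $\Ext_{\rm{cris}}(\hat \huaL', \hat \huaL) \to \Ext_{\rm{cris}}(L, L')$ is again a bijection.

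The main obstacle I anticipate is the careful verification that the anti-equivalence of Theorem \ref{hatMcoeff}(3) passes cleanly to an isomorphism of Ext groups. Concretely, one must confirm that $\huaS_{\mathcal O_E}$-module exactness of $0 \to \hat \huaL \to \hat \huaN \to \hat \huaL' \to 0$ corresponds under $\hat T$ to $\mathcal O_E[G_K]$-module exactness of $0 \to L' \to N \to L \to 0$, and that the equivalence relation on extensions is transported correctly in both directions---in particular, producing an isomorphism $\hat \huaN^{(1)} \to \hat \huaN^{(2)}$ from a $G_K$-equivariant isomorphism $N^{(1)} \to N^{(2)}$ of central terms. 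Both should reduce to exactness of $\hat T$ and its quasi-inverse implicit in Theorem \ref{hatMcoeff}(3), combined with the fully faithfulness of $\hat T$.
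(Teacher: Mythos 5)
Your injectivity argument for $\Ext(\hat \huaL', \hat \huaL) \to \Ext_{\rm{st}}(L, L')$ (using full faithfulness of $\hat T$ to produce $\xi$) is fine and matches the paper, and your treatment of the rightmost isomorphism $E\otimes_{\mathcal O_E}\Ext_{\rm{st}}(L,L')\simeq \Ext_{\rm{st}}(V,V')$ is also essentially the paper's argument. The problem is the surjectivity claim.

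You assert that $\Ext(\hat\huaL',\hat\huaL) \to \Ext_{\rm st}(L,L')$ is already a bijection before inverting $\omega_E$, on the grounds that the quasi-inverse of the anti-equivalence in Theorem \ref{hatMcoeff}(3) lifts an extension of lattices to an extension of $(\varphi,\Ghat)$-modules. This is precisely the step that fails, and you in fact flag it yourself in your last paragraph as the main obstacle, but then dismiss it too quickly. The quasi-inverse of $\hat T$ is \emph{not} exact: given a short exact sequence $0 \to L' \to N \to L \to 0$ of lattices, the induced sequence $0 \to \hat\huaL \to \hat\huaN \to \hat\huaL' \to 0$ is only \emph{left} exact (Lemma 2.19 of \cite{Liu12}), and the quotient $\hat\huaN/\hat\huaL$ need not equal $\hat\huaL'$ — it may be a proper submodule. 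So this sequence is not in general an element of $\Ext(\hat\huaL',\hat\huaL)$, and surjectivity before tensoring with $E$ is false.

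The paper's proof navigates exactly this gap: it passes to $S[\frac{1}{p}]\otimes_{\huaS,\varphi}(-)$ and uses the exact equivalences of Theorem \ref{bigMF} to see that the Breuil-module sequence \emph{is} short exact (because it corresponds to the exact sequence of semi-stable $E$-representations), then descends to exactness of $\hat x[\frac{1}{p}]$ via $\huaS[\frac{1}{p}]\cap(S[\frac{1}{p}])^{\times}=(\huaS[\frac{1}{p}])^{\times}$, and finally uses the $p>2$ matrix description (Lemma \ref{pnot2}) to show that some $p^m$-multiple of $\hat x$ lands back in $\Ext(\hat\huaL',\hat\huaL)$. This is why the conclusion is only an isomorphism after tensoring with $E$, not an isomorphism of $\mathcal O_E$-modules. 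To repair your argument, you need to replace the asserted integral bijection with this rational-cokernel argument (or some substitute for it); as written the proof has a genuine gap.
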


\begin{proof}
Clearly we have
$E\otimes_{\mathcal O_E}\Ext(L, L') =\Ext(V, V').$
Now $\Ext_{\rm{st}}(L, L')$ is the preimage of $\Ext_{\rm{st}}(V, V')$ in the map $\Ext(L, L') \to \Ext(V, V')$, so we have $E\otimes_{\mathcal O_E}\Ext_{\rm{st}}(L, L') =\Ext_{\rm{st}}(V, V').$

Since $\hat T$ is fully faithful, it is easy to see that the map $\Ext(\hat \huaL', \hat \huaL) \to \Ext_{\rm{st}}(L, L')$ is injective.
So, in order to prove $E\otimes_{\mathcal O_E} \Ext(\hat \huaL', \hat \huaL) = E\otimes_{\mathcal O_E}\Ext_{\rm{st}}(L, L')$, it suffices to show that for any $x \in \Ext_{\rm{st}}(L, L')$, there exists some nonnegative integer $m$, such that $p^m x \in \hat{T}(\Ext(\hat \huaL', \hat \huaL))$.
Take a representative in $x$, $0 \to L' \to N \to L \to 0.$
By full faithfulness of $\hat T$, we will have a sequence
$$\hat x: \quad 0 \to \hat\huaL \to \hat\huaN \to \hat\huaL' \to 0.$$

The sequence $\hat x$ is left exact by \cite[Lem. 2.19]{Liu12}, but it is not necessarily exact (i.e., $\hat x$ is not necessarily in $\Ext(\hat \huaL' ,\hat \huaL)$), because the inverse of $\hat T$ is not exact.
However, we claim that the following sequence is short exact (note that the following sequence is by tensoring over $\huaS$, i.e., we are treating the modules $\huaL, \huaN, \huaL'$ in $\Mod_{\huaS_{\mathcal O_E}}^{\varphi, \Ghat}$ as modules in $\Mod_{\huaS_{\Zp}}^{\varphi, \Ghat}$, but it does not affect our result):
$$0 \to S[1/p]\otimes_{\huaS, \varphi}\huaL \to S[1/p]\otimes_{\huaS, \varphi}\huaN \to S[1/p]\otimes_{\huaS, \varphi}\huaL' \to 0.$$
This is because of the exact equivalences in Theorem \ref{bigMF}, and the above sequence corresponds to the short exact sequence of semi-stable representations
$$0 \to L'\otimes_{\mathcal O_E}E \to N\otimes_{\mathcal O_E}E \to L\otimes_{\mathcal O_E}E \to 0.$$

Now, since
$\huaS[1/p] \cap (S[1/p])^{\times} =(\huaS[1/p])^{\times},$
one can easily deduce that
$$\hat x[1/p]: \quad 0 \to \huaL[1/p]  \to \huaN[1/p] \to \huaL'[1/p] \to 0$$
is short exact.
The above short exact sequence is an element in $\Ext(\hat\huaL'[1/p], \hat\huaL[1/p] )$ (see Remark \ref{invertpext}).
Note that we have $p>2$, so by an analogue of Lemma \ref{pnot2}, this element in $\Ext(\hat\huaL'[1/p], \hat\huaL[1/p] )$ is determined by two matrices
$M_{\varphi} \in \Mat(\huaS\otimes_{\Zp}E) \textnormal{ and } M_{\tau} \in \Mat(\hat{R}\otimes_{\Zp}E).$
Both $M_{\varphi}$ and $M_{\tau}$ are block upper-triangular. So for $m$ big enough, $p^m$ times the upper right corner of $M_{\varphi}$ will fall in $\huaS_{\mathcal O_E}$, and $p^m$ times the upper right corner of $M_{\tau}$ will fall in $\hat{R}_{\mathcal O_E}$. That is to say, $p^m \hat x \in \Ext(\hatL', \hatL)$. And we are done for the proof of Statement (1).

The proof of Statement (2) (the crystalline case) is similar to Statement (1).
\end{proof}

\begin{rem}
By Proposition \ref{tensorE}, the $\mathcal O_E$-free parts of both $\Ext(\hat \huaL', \hat \huaL)$ and $\Ext_{\cris}(\hat \huaL', \hat \huaL)$ are finitely generated, because the cohomology groups $H_g^1$ and $H_f^1$ are finite free over $E$ (see \cite[Prop. 1.24]{Nek93}). However, we do not know much about the torsion parts of these $\mathcal O_E$-modules. Let us mention that in \cite{GLS14+}, they have constructed an (explicit) example where $\Ext_{\cris}(\hat \huaL', \hat \huaL)$ has torsion. We also have very little knowledge about the $\mathcal O_E$-module structures of those other modules as in Prop. \ref{modulestr}.
\end{rem} 

\section{Two conditions on upper triangular extensions}

In this section, we prove two useful propositions, which will be used in our crystalline lifting theorems. The first one is about restricting group cohomology from $G_K$ to $G_{\infty}$, the second one is about equip $\tau$-actions to modules in $\Mod_{\huaS_{k_E}}^{\varphi}$. The results in this section are valid for any finite extension $K/\Qp$ with $p>2$.

\begin{prop} \label{injectivity}
Let $\barzeta_i : G_K \to k_E^{\times}, 1\leq i \leq d$ be characters, such that $\barzeta_i \neq \mathbbm{1}$ or $\overline \varepsilon_p$, where $\mathbbm{1}$ is the trivial character and $\overline \varepsilon_p$ is the reduction of the cyclotomic character.
Suppose $W \in \mathcal E_{G_K}(\barzeta_1, \ldots, \barzeta_d)$, then the restriction map
$H^1(G_K, W) \to H^1(G_{\infty}, W)$
is injective.
\end{prop}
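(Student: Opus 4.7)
The proof proceeds by induction on $d$, reducing to the case of characters and then to an inflation-restriction computation along the Galois pair $(G_K, G_{\hat K})$.

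\emph{Base case $d=1$.} Say $W = \barzeta$. Both $\hat K/K$ and $\hat K/K_\infty$ are Galois, so $G_{\hat K}$ is normal in $G_K$ and in $G_\infty$. A short diagram chase on the commutative pair of inflation-restriction sequences
\[
\begin{CD}
0 @>>> H^1(\Ghat, \barzeta^{G_{\hat K}}) @>>> H^1(G_K, \barzeta) @>>> H^1(G_{\hat K}, \barzeta) \\
@. @VVV @VVV @| \\
0 @>>> H^1(H_K, \barzeta^{G_{\hat K}}) @>>> H^1(G_\infty, \barzeta) @>>> H^1(G_{\hat K}, \barzeta)
\end{CD}
\]
identifies $\ker(H^1(G_K, \barzeta) \to H^1(G_\infty, \barzeta))$ with $\ker(H^1(\Ghat, \barzeta^{G_{\hat K}}) \to H^1(H_K, \barzeta^{G_{\hat K}}))$. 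If $\barzeta|_{G_{\hat K}} \neq 1$ this vanishes trivially. Otherwise $\barzeta$ factors through $\Ghat = G_{p^\infty} \rtimes H_K$, and since $G_{p^\infty}$ is pro-$p$ and $k_E^\times$ has order prime to $p$, $\barzeta$ descends to a character of $H_K$ (trivial on $G_{p^\infty}$). Given a class in the kernel, after subtracting a coboundary we may represent it by a cocycle $c:\Ghat \to \barzeta$ with $c|_{H_K}=0$. With $m := c(\tau)$ and $\tau$ acting trivially on $\barzeta$, the cocycle relation combined with $h\tau h^{-1} = \tau^{\varepsilon_p(h)}$ gives
$$\overline\varepsilon_p(h)\cdot m \;=\; c(\tau^{\varepsilon_p(h)}) \;=\; c(h\tau h^{-1}) \;=\; \barzeta(h)\cdot m,\qquad \text{all } h \in H_K.$$
Since $\barzeta$ and $\overline\varepsilon_p$ both factor through $H_K$ in this case, the hypothesis $\barzeta \neq \overline\varepsilon_p$ yields some $h$ with $\barzeta(h)\neq\overline\varepsilon_p(h)$, forcing $m=0$ and hence $c = 0$.

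\emph{Inductive step.} Let $W' := \Fil^{d-1} W$, so $W/W' = \barzeta_d$ and $W' \in \mathcal E_{G_K}(\barzeta_1, \ldots, \barzeta_{d-1})$. The long exact cohomology sequences for $G_K$ and $G_\infty$ assemble into a commutative diagram with exact rows
\[
\begin{CD}
H^0(G_K, \barzeta_d) @>>> H^1(G_K, W') @>>> H^1(G_K, W) @>>> H^1(G_K, \barzeta_d) \\
@VVV @VVV @VVV @VVV \\
H^0(G_\infty, \barzeta_d) @>>> H^1(G_\infty, W') @>>> H^1(G_\infty, W) @>>> H^1(G_\infty, \barzeta_d).
\end{CD}
\]
The second vertical map is injective by the inductive hypothesis and the fourth by the base case. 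The leftmost vertical map is surjective because both $H^0$-groups vanish: $H^0(G_K, \barzeta_d) = 0$ since $\barzeta_d \neq \mathbbm{1}$, and if $\barzeta_d|_{G_\infty} = 1$ then $\barzeta_d$ would be trivial on $G_{\hat K}$, on $H_K \simeq G_\infty/G_{\hat K}$, and on $G_{p^\infty}$ (pro-$p$), hence trivial on all of $\Ghat$ and on $G_K$. The four-lemma for injectivity then yields injectivity of the third vertical map, completing the induction.

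The main work is the base case cocycle computation: the obstruction is controlled by the clash between $\barzeta$ and the conjugation action of $H_K$ on $G_{p^\infty}$, which is governed by $\overline\varepsilon_p$; this is precisely why both $\mathbbm{1}$ and $\overline\varepsilon_p$ must be excluded.
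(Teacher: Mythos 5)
Your proof is correct, and it takes a genuinely different route from the paper's. The paper proves the stronger statement that $H^1(\Ghat, W^{G_{\hat K}})=0$ directly for the $d$-dimensional $W$, via a three-layer tower of inflation-restriction sequences (first along $G_{\hat K}\trianglelefteq G_K$, then along $G_{p^\infty}\trianglelefteq \Ghat$ and $\Gal(K_{p^\infty}/K(\mu_p))\trianglelefteq\Gal(K_{p^\infty}/K)$), reducing to vanishing of $H^1$ of $\Zp$ and of a subgroup of $\Fp^\times$ against nontrivial characters, with the hypotheses entering via $W^{G_K}=0$ and $\barvarepsilon_p\not\subset W$. You instead induct on $d$ via the four-lemma, isolating the rank-one case; there a single diagram chase on the two inflation-restriction sequences for $G_{\hat K}\trianglelefteq G_K$ and $G_{\hat K}\trianglelefteq G_\infty$ reduces the kernel to $\ker\bigl(H^1(\Ghat,\barzeta^{G_{\hat K}})\to H^1(H_K,\barzeta^{G_{\hat K}})\bigr)$, and the explicit cocycle identity $\overline\varepsilon_p(h)\,m=\barzeta(h)\,m$ coming from $h\tau h^{-1}=\tau^{\varepsilon_p(h)}$ kills this. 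What you gain is economy: because $\Ghat\simeq G_{p^\infty}\rtimes H_K$ is split, the restriction to $H_K$ retracts the inflation from $H_K$, so the relevant kernel sits inside $H^1(G_{p^\infty},\barzeta)^{H_K}$ and you never need to analyze $H^1(H_K,\cdot)$ itself (unlike the paper, which must show $H^1(\Gal(K_{p^\infty}/K),W_2)=0$); the semidirect-product structure and the role of $\overline\varepsilon_p$ are laid bare. The trade-off is that you obtain only the needed injectivity rather than the outright vanishing of $H^1(\Ghat, W^{G_{\hat K}})$, and the induction requires the extra (correctly supplied) observation that $H^0(G_\infty,\barzeta_d)=0$, which itself uses both that $\barzeta_d\neq\mathbbm{1}$ and the structure $\Ghat = G_{p^\infty}\rtimes H_K$. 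One small notational slip: with the paper's convention for $\mathcal E(m_d,\ldots,m_1)$, taking $W'=\Fil^{d-1}W$ gives $W/W'=\barzeta_1$ and $W'\in\mathcal E_{G_K}(\barzeta_2,\ldots,\barzeta_d)$, not $\barzeta_d$ and $\mathcal E_{G_K}(\barzeta_1,\ldots,\barzeta_{d-1})$; since the hypotheses are symmetric in the $\barzeta_i$ this does not affect the argument.
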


\begin{proof}
The proof imitates that of \cite[Lem. 7.4.3]{EGS14}. We write it out in more detail, for the reader's convenience. Recall that $\hat K = K_{p^{\infty}, \infty}$, $G_{\hat K}=\Gal(\overline K/\hat K)$, $\hat G=\Gal(\hat K/K)$ and $G_{p^{\infty}}=\Gal(\hat K/K_{p^{\infty}})$. To prove that $H^1(G_K, W) \to H^1(G_{\infty}, W)$ is injective, it suffices to show that the composite
$H^1(G_K, W) \to H^1(G_{\infty}, W) \to H^1(G_{\hat K}, W)$
is injective. By inflation-restriction, it suffices to show that $H^1(\hat G, W^{G_{\hat K}})=0$.
Denote $W_1=W^{G_{\hat K}}$, by inflation-restriction, we have
$$ 0 \to H^1(\Gal(K_{p^{\infty}}/K), W_2) \to H^1(\hat G, W_1) \to H^1(G_{p^{\infty}}, W_1)^{\Gal(K_{p^{\infty}}/K)}.$$

Where $W_2= W_1^{\Gal(\hat K/K_{p^{\infty}})}= W^{\Gal(\overline K/K_{p^{\infty}})}$. Again by inflation-restriction,
\begin{multline*}
0 \to H^1(\Gal(K(\mu_p)/K), W_3) \to H^1(\Gal(K_{p^{\infty}}/K), W_2) \\
\to H^1(\Gal(K_{p^{\infty}}/K(\mu_p)), W_2)^{\Gal(K(\mu_p)/K)}.
\end{multline*}

Where $W_3 = W_2^{\Gal(K_{p^{\infty}}/K(\mu_p))}=W^{\Gal(\overline K/K(\mu_p))}$. So now, it suffices to prove that
$ H^1(\Gal(K_{p^{\infty}}/K(\mu_p)), W_2)^{\Gal(K(\mu_p)/K)}=0$, $H^1(\Gal(K(\mu_p)/K), W_3)=0$ and $H^1(G_{p^{\infty}}, W_1)^{\Gal(K_{p^{\infty}}/K)}=0$.

(1). To show that $H^1(\Gal(K_{p^{\infty}}/K(\mu_p)), W_2)^{\Gal(K(\mu_p)/K)}=0$, note that\\
 $\Gal(K_{p^{\infty}}/K(\mu_p)) \simeq \Zp$ is abelian, so the action of $\Gal(K_{p^{\infty}}/K(\mu_p))$ on $W_2$ is via a sum of characters.
For $\theta: \Zp \to k_E^{\times}$ a character, $H^1(\Zp, \theta) =0$ unless $\theta$ is the trivial character. So $H^1(\Gal(K_{p^{\infty}}/K(\mu_p)), W_2)^{\Gal(K(\mu_p)/K)} \neq 0$ only if there exists $0 \neq v \in W^{G_K}$, which is impossible.

(2). It is easy to show that $H^1(\Gal(K(\mu_p)/K), W_3)=0$, using that $\Gal(K(\mu_p)/K)$ is a subgroup of $\Fp^{\times}$.

(3). To show $H^1(G_{p^{\infty}} , W_1)^{\Gal(K_{p^{\infty}}/K)}=0$, note that $G_{p^{\infty}} \simeq \Zp$ is abelian. For $\theta: \Zp \to k_E^{\times}$ a character, $H^1(\Zp, \theta) \neq 0$ only if $\theta$ is the trivial character. So if $H^1(G_{p^{\infty}} , W_1)^{\Gal(K_{p^{\infty}}/K)} \neq 0$, there exists $0 \neq v \in W_1$ which generates a trivial character of $G_{p^{\infty}}$, i.e., $e \in W^{\Gal(\overline K/K_{p^{\infty}})}$, and such that $\Gal(K_{p^{\infty}}/K)$ acts on $k_E\cdot e$ via the cyclotomic character, i.e., $\overline \varepsilon_p \subset W$, contradiction.
\end{proof}

\noindent \textnormal{\textbf{{Condition (C-2A)}}} Let $\barzeta_i : G_K \to k_E^{\times}, 1\leq i \leq d$ be characters. We say that the ordered sequence $(\barzeta_1, \ldots, \barzeta_d)$ satisfies Condition \textbf{(C-2A)} if $\barzeta_i^{-1} \barzeta_j \neq \mathbbm{1}$ or $\overline{\varepsilon}_p$, for all $i <j$. When $(\barzeta_1, \ldots, \barzeta_d)$ satisfies \textbf{(C-2A)}, for any $\barrho \in \mathcal E(\barzeta_1, \ldots, \barzeta_d)$, we also say $\barrho$ satisfies \textbf{(C-2A)}. By Proposition \ref{injectivity}, when $(\barzeta_1, \ldots, \barzeta_d)$ satisfies \textbf{(C-2A)}, then for any $1 \leq i <d$, and any $W \in \mathcal E(\barzeta_i^{-1}\barzeta_{i+1}, \ldots, \barzeta_i^{-1}\barzeta_d)$, the map $H^1(G_K, W) \to H^1(G_{\infty}, W)$ is injective.

\begin{rem}
  \begin{enumerate}
\item When $d=2$, our Condition \textnormal{\textbf{{ (C-2A)}}} is related to peu ramifi\'{e} and tr\`{e}s ramifi\'{e} extensions of characters as in \cite[\S 9]{GLS14}. When $d>2$, things get even more complicated, see e.g., the discussions in \cite{GHLS}.

    \item After the current paper was posted, \cite[Lem. 3.9]{LLHLM} proved a more general statement than our Proposition \ref{injectivity}. In particular, using \cite[Lem. 3.9]{LLHLM}, we can weaken the Condition \textnormal{\textbf{{ (C-2A)}}} such that we only need to require $\barzeta_i^{-1} \barzeta_j \neq \overline{\varepsilon}_p$, (and thus slightly improve our main theorems).
  \end{enumerate}
\end{rem}

We define another condition for our next proposition, which is a direct generalization of \cite[Lem. 8.1]{GLS14}.
\smallskip

\noindent \textnormal{\textbf{{Condition (C-2B)}}} Let $(\barn_1, \ldots, \barn_d)$ be an \emph{ordered} sequence of rank-$1$ modules in $\Mod_{\huaS_{k_E}}^{\varphi}$. We say $(\barn_1, \ldots, \barn_d)$ satisfies Condition \textbf{(C-2B)}, if there does not exist $i<j$, such that
$$\barn_i=\barm(0, \ldots, 0 ;a_i),\quad \barn_j=\barm(p, \ldots, p ;a_j)$$
for some $a_i, a_j \in k_E^{\times}$.
When $(\barn_1, \ldots, \barn_d)$ satisfies \textbf{(C-2B)}, then for any $\barm \in \mathcal E(\barn_d, \ldots, \barn_1)$, we also say $\barm$ satisfies \textbf{(C-2B)}.

\smallskip

\begin{prop} \label{forgettau}

Suppose $\barhatl \in \mathcal E_{\vtshape}(\barhatn_r, \ldots, \barhatn_1)$, $\barhatl' \in \mathcal E_{\vtshape}(\barhatn_d, \ldots, \barhatn_{r+1}),$
where $\overline{\hat\huaN}_i, 1 \leq i \leq d$ are rank-$1$ modules in $\Mod_{\huaS_{k_E}}^{\varphi, \Ghat}$ such that the ordered sequence $(\barn_1, \ldots, \barn_d)$ satisfies Condition \textbf{(C-2B)}. Then the map (forgetting the $\tau$-action)
$$ \Ext_{\vtshape}(\barhatl', \barhatl) \to  \Ext_{\vshape}(\barl', \barl) $$
is a $k_E$-linear homomorphism, and it is injective.
\end{prop}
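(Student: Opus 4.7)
Linearity is immediate from the matrix description developed in the proof of Proposition \ref{modulestr}(5): an element of $\Ext_{\vtshape}(\barhatl', \barhatl)$ is represented by a block upper-triangular pair $(m_\varphi, m_\tau)$ whose off-diagonal blocks $C$ and $Y_\tau$ carry the $k_E$-action by scalar multiplication, and the forgetful map simply discards $m_\tau$ and the data of $Y_\tau$. This map is therefore manifestly $k_E$-linear on representatives, and descends to the quotient by the appropriate coboundary relations.

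For injectivity, suppose $[\barhatn]$ lies in the kernel, with representative $m_\varphi = \begin{pmatrix} A & C \\ 0 & A' \end{pmatrix}$ and $m_\tau = \begin{pmatrix} X_\tau & Y_\tau \\ 0 & X'_\tau \end{pmatrix}$ meeting the shape requirements of Proposition \ref{shape} and Lemma \ref{taushape}. The vanishing of the image in $\Ext_{\vshape}(\barl',\barl)$ furnishes $W \in \Mat(\huaS_{k_E})$ with $C = WA' - A\varphi(W)$. Changing basis by $\begin{pmatrix} \Id & W \\ 0 & \Id \end{pmatrix}$ we may assume $C = 0$, at the price of replacing $Y_\tau$ by $Y'_\tau := Y_\tau - (WX'_\tau - X_\tau \tau(W))$. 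With $C=0$, the $(\varphi,\tau)$-commutation $m_\tau \tau(\varphi(m_\varphi)) = \varphi(m_\varphi)\varphi(m_\tau)$ collapses to
\[ Y'_\tau \cdot \tau(\varphi(A')) \;=\; \varphi(A) \cdot \varphi(Y'_\tau). \]
It then suffices to exhibit $W' \in \Mat(\huaS_{k_E})$ satisfying both $W'A' = A\varphi(W')$ (so that $C=0$ is preserved) and $Y'_\tau = W'X'_\tau - X_\tau \tau(W')$; this exhibits the original class as trivial in $\Ext_{\vtshape}$.

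I would then reduce to the rank-$1$-by-rank-$1$ case by a standard d\'evissage through the filtrations of $\barhatl$ and $\barhatl'$, peeling off one rank-$1$ constituent at a time and using the long exact sequences associated to the resulting short exact sequences in $\Mod_{\huaS_{k_E}}^{\varphi, \Ghat}$. Once reduced to pairs $(\barhatn_i, \barhatn_j)$ with $i \le r < j$, the explicit rank-$1$ formulas from Definition \ref{huaMrank1} and Lemma \ref{rank1}(4) turn the cocycle equation into a single scalar equation in $R \otimes_{\Fp} k_E$. The valuation bound $v_R(Y'_\tau) \ge p^2/(p-1)$ from Lemma \ref{taushape} combined with the compatibility relation allows one to solve for the corresponding entry of $W'$ in $\huaS_{k_E}$, except precisely in the configuration ruled out by Condition \textbf{(C-2B)}, namely $\barn_i = \barm(0,\ldots,0; a_i)$ sitting underneath $\barn_j = \barm(p,\ldots,p; a_j)$. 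In that borderline configuration the compatibility equation and the valuation estimate conspire to leave a genuine $k_E$-sized obstruction, and (C-2B) excludes it by hypothesis.

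The main obstacle is the last, entry-wise step: after reduction to rank $1$-by-rank $1$, one must carefully combine the shape datum from Lemma \ref{taushape}, the identity $Y'_\tau \cdot \tau(\varphi(A')) = \varphi(A) \cdot \varphi(Y'_\tau)$, and the rank-$1$ uniqueness of Lemma \ref{rank1} to actually produce $W'$. This is the higher-dimensional analogue of Lemma 8.1 of \cite{GLS14} (which disposes of the $d=2$ borderline case by exactly such a direct computation), and the verification that (C-2B) is the \emph{precise} hypothesis needed is the subtle point that one has to unwind from the interplay between the $u$-adic valuation of $A, A'$ and the $R$-valuation of $Y'_\tau$.
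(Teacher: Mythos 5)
Your proposal identifies the correct mechanism -- the $\tau$-action should be \emph{rigid} given the $\varphi$-structure and shape constraints, with Condition \textbf{(C-2B)} and a generalization of Lemma 8.1 of \cite{GLS14} doing the work -- but the route you take to exploit this has two genuine gaps, and the paper's route is different and cleaner.

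First, the change of basis by $\bigl(\begin{smallmatrix}\Id & W\\ 0 & \Id\end{smallmatrix}\bigr)$ killing $C$ replaces $Y_\tau$ by $Y'_\tau = Y_\tau - (WX'_\tau - X_\tau\tau(W))$, and this correction term need \emph{not} satisfy the valuation bound $v_R(\cdot)\ge \tfrac{p^2}{p-1}$ of Lemma \ref{taushape}. Writing $WX'_\tau - X_\tau\tau(W) = (W-\tau(W)) + W(X'_\tau - \Id) - (X_\tau-\Id)\tau(W)$, the last two summands are fine, but $(\tau-1)(W)$ only has $v_R \ge \tfrac{p}{p-1}+1$ in general (since $v_R(\underline\epsilon - 1) = \tfrac{p}{p-1}$ and $W\in\Mat(\huaS_{k_E})$), which is strictly smaller than $\tfrac{p^2}{p-1}$. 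So after your normalization $Y'_\tau$ has typically lost the shape hypothesis that your later valuation estimate needs, and the uniqueness argument you want to run in the borderline (C-2B) configuration no longer has its key input. Second, the ``d\'evissage through the filtrations using long exact sequences in $\Mod_{\huaS_{k_E}}^{\varphi,\Ghat}$'' is not available: that category is not abelian, the $\Ext$ here is a set of equivalence classes of extensions with an ad hoc $k_E$-structure (Proposition \ref{modulestr}), not a derived functor, and the $\vtshape$ condition is a statement about matrices in a chosen basis, not a subfunctor compatible with arbitrary subquotients. There is no justified long exact sequence to peel with.

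The paper's proof avoids both issues by \emph{not} normalizing and \emph{not} reducing rank. It reformulates injectivity as: if two elements of $\Ext_{\vtshape}$ map to the same class in $\Ext_{\vshape}$, the underlying Kisin-module isomorphism $\xi$ automatically commutes with the $\tau$-actions. This reduces to a single matrix uniqueness lemma for the \emph{full} rank-$d$ upper-triangular module $\barm$: if $A_s$ are the shaped $\varphi$-matrices (Proposition \ref{shape}) and $Z_s, Z_s'$ are two systems of $\tau$-matrices both of the shape of Lemma \ref{taushape} and both satisfying $Z_{s+1}\tau(\varphi(A_s))=\varphi(A_s)\varphi(Z_s)$, then $Z_s = Z_s'$. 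That lemma is proved directly, exactly as in Lemma 8.1 of \cite{GLS14} but entrywise by induction on the distance from the diagonal of $Z_s - Z_s'$ (first the diagonal entries $z_{s,i,i}$, then $z_{s,i,i+1}$, then $z_{s,i,i+2}$, and so on), with (C-2B) used to make each successive superdiagonal stripe uniquely determined. This is where the $p$-vs-$0$ configuration excluded by (C-2B) shows up, but there is never any basis change on $Y_\tau$, never any passage to rank-one subquotients, and the valuation hypothesis on $Z_s, Z_s'$ is always available because it is assumed rather than fought for. If you want to repair your version, the right move is to drop the normalization $C=0$ entirely and instead compare the two $\tau$-matrix systems directly in the spirit of this lemma.
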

\begin{proof}
The map is clearly $k_E$-linear. To prove injectivity, one can easily check that it reduces to the following lemma.
\end{proof}

\begin{lemma}
Let $\barm \in \mathcal E_{\varphi- \rm shape}(\barn_d, \ldots, \barn_1)$, such that the ordered sequence $(\barn_1, \ldots, \barn_d)$ satisfies Condition \textbf{(C-2B)}. Let
\begin{itemize}
  \item $A_s \in \Mat(k_E\llb u\rrb), 0 \leq s \leq f-1$ be a set of matrices for $\varphi_{\barm_s}$ of the shape in Proposition \ref{shape}, and
  \item $Z_s, Z_s' \in \Mat(R\otimes_{\Fp}k_E)$ be a set of matrices of the shape in Lemma \ref{taushape}.
\end{itemize}
If we have $Z_{s+1} \tau(\varphi(A_s)) =\varphi(A_s)\varphi(Z_s), \forall s$, and $Z_{s+1}' \tau(\varphi(A_s)) =\varphi(A_s)\varphi(Z_s'), \forall s$. Then $Z_s= Z_s', \forall s$.
\end{lemma}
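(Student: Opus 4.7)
The plan is to set $W_s := Z_s - Z_s'$ and prove $W_s = 0$ for every $s$ by induction on the upper off-diagonal distance $k$ (i.e., show that $(W_s)_{i,i+k}=0$ for all $s,i$). Subtracting the two commutation relations gives the linear system
\[
W_{s+1}\,\tau(\varphi(A_s)) = \varphi(A_s)\,\varphi(W_s), \qquad \forall\, s.
\]
By Lemma~\ref{taushape}, $W_s$ is upper triangular with every entry having $v_R$-valuation $\ge p^2/(p-1)$. Upper triangularity of $\varphi(A_s)$ combined with the induction hypothesis collapses the $(i,i+k)$-entry of this matrix equation down to the single diagonal-on-diagonal scalar relation
\[
(W_{s+1})_{i,i+k}\,\tau\bigl(\varphi((A_s)_{i+k,i+k})\bigr) = \varphi((A_s)_{i,i})\,\varphi\bigl((W_s)_{i,i+k}\bigr),
\]
all cross-terms vanishing either by upper triangularity or by the inductive hypothesis applied to closer diagonals.

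Write $w_s := (W_s)_{i,i+k}$. Since $(A_s)_{jj} = (a_j)_s u^{t_{j,s}}$ and $\tau([\underline{\pi}]) = [\underline{\epsilon}][\underline{\pi}]$, reducing modulo $p$ turns the scalar relation into
\[
w_{s+1}\,(a_{i+k})_s\,\underline{\epsilon}^{\,p\,t_{i+k,s}}\,\underline{\pi}^{\,p\,t_{i+k,s}} = (a_i)_s\,\underline{\pi}^{\,p\,t_{i,s}}\,\varphi(w_s).
\]
I would then apply the valuation $v_R$ of the unnamed lemma preceding Lemma~\ref{taushape}; using $v_R((a)_s) = v_R(\underline{\epsilon}) = 0$, $v_R(\varphi(x)) = p\,v_R(x)$, and the standard normalization $v_R(\underline{\pi}) = 1$ (valid since $K/\Qp$ is unramified), this yields the recursion
\[
v_R(w_{s+1}) \;=\; p\,v_R(w_s) \,+\, p\,(t_{i,s} - t_{i+k,s}).
\]
In the base case $k=0$ the correction vanishes; iterating $f$ times with the cyclic identification $w_{s+f}=w_s$ forces $(p^f-1)\,v_R(w_s) = 0$, hence $v_R(w_s) = 0$, which contradicts the lower bound $p^2/(p-1) > 0$ unless $w_s = 0$.

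For $k \ge 1$, assume $w_s \ne 0$ and iterate the recursion $f$ times; using $w_{s+f} = w_s$ I solve to obtain the closed form
\[
v_R(w_s) \;=\; \frac{p\,\sum_{j=0}^{f-1} p^{f-1-j}\bigl(t_{i+k,s+j} - t_{i,s+j}\bigr)}{p^f - 1}.
\]
Each difference lies in $[-p,p]$, so the numerator is at most $p^2(p^f-1)/(p-1)$, with equality precisely when $t_{i,s+j}=0$ and $t_{i+k,s+j}=p$ for every $j$. This gives the upper bound $v_R(w_s) \le p^2/(p-1)$, with equality only if $\barn_i = \barm(0,\ldots,0;a_i)$ and $\barn_{i+k} = \barm(p,\ldots,p;a_{i+k})$ with $i<i+k$. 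Combining with the lower bound $v_R(w_s) \ge p^2/(p-1)$ from Lemma~\ref{taushape} forces this extremal configuration, which is exactly what Condition \textbf{(C-2B)} prohibits; hence $w_s = 0$, completing the induction.

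The delicate point is that the lower bound $p^2/(p-1)$ coming from the explicit $\tau$-shape and the upper bound $p^2/(p-1)$ coming from the arithmetic of Hodge--Tate weights in $[0,p]$ meet exactly at the extremal weight configuration, and Condition \textbf{(C-2B)} is calibrated precisely to exclude that unique equality case. Away from the borderline, the analysis actually forces $v_R(w_s) < p^2/(p-1)$, and in fact if the weighted difference $\sum p^{f-1-j}(t_{i+k,s+j}-t_{i,s+j})$ is nonpositive the recursion yields $v_R(w_s)\le 0$, contradicting the lower bound immediately; so the sharp case dictated by Lemma~\ref{solution}-type extremals is the only obstruction one needs to eliminate, and (C-2B) does exactly this.
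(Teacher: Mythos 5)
Your proof is correct, and it is precisely the "straightforward generalization of Lemma 8.1 of \cite{GLS14}" that the paper sketches: after forming $W_s=Z_s-Z_s'$, you induct on the super-diagonal distance $k$, use upper triangularity plus the inductive hypothesis to isolate the scalar relation $(W_{s+1})_{i,i+k}\,\tau(\varphi((A_s)_{i+k,i+k}))=\varphi((A_s)_{i,i})\,\varphi((W_s)_{i,i+k})$, and then run the valuation argument around the cycle $s\mapsto s+1 \mapsto\cdots\mapsto s+f$. The way the upper bound $\frac{p^2}{p-1}$ and the lower bound from Lemma~\ref{taushape} meet exactly at the configuration $\barn_i=\barm(0,\ldots,0;a_i)$, $\barn_{i+k}=\barm(p,\ldots,p;a_{i+k})$ is exactly the calibration of Condition \textbf{(C-2B)}, as in the $d=2$ argument you are generalizing, so no substantive difference from the paper's intended proof.
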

\begin{proof}
This is easy generalization of \cite[Lem. 8.1]{GLS14}, and we sketch the proof.
Just as in the proof of \textit{loc. cit.}, we can expand out the matrix equation
$ Z_{s+1} \tau(\varphi(A_s)) =\varphi(A_s)\varphi(Z_s),$
and then one can easily see that the diagonal elements $Z_{s, i, i}$ are uniquely determined.
Then, one can argue as in the same fashion as in \textit{loc. cit.}, that $z_{s, i, i+1}$ are uniquely determined (so long that Condition \textbf{(C-2B)} is satisfied). Then similarly, we can show that $z_{s, i, i+2}$ are uniquely determined, and so on for all elements in $Z_s$.
\end{proof}

\section{Main local results: crystalline lifting theorems}

\newcommand{\Nek}{\textnormal{cris}}

\newcommand{\rk}{\textnormal{rk}}
 
First we introduce some useful definitions.

\begin{defn} \label{dimNek}
\begin{enumerate}
  \item Suppose $\huaL, \huaL' \in \Mod_{\huaS_{\mathcal O_E}}^{\varphi}$, where $\textnormal{rk}(\huaL')=1, \textnormal{rk}(\huaL)=d-1$, and $\huaL \in \mathcal E(\huaN_d, \ldots, \huaN_2)$. Suppose $\huaL' =\huaM(t_{1, 0}, \ldots, t_{1, f-1}; a_1)$, $\huaN_i =\huaM(t_{i, 0}, \ldots, t_{i, f-1}; a_i), \forall 2\leq i \leq d$ are rank-1 modules as in Definition \ref{huaMrank1}.
      Then define
       $$d_{\Nek}(\huaL, \huaL')= \# \{(i, s) \mid 2 \leq i \leq d, 0 \leq s \leq f-1, t_{i, s} > t_{1, s}  \}.$$

   \item Define $d_{\Nek}$ for similar pairs in $\Mod_{\huaS_{k_E}}^{\varphi}$, $\Mod_{\huaS_{\mathcal O_E}}^{\varphi, \Ghat}$, $\Mod_{\huaS_{k_E}}^{\varphi, \Ghat}$ analogously.
    \item Suppose $\rho, \rho'$ crystalline $E$-representation of $G_K$, with $\dim_E \rho'=1$ and $\rho \in \mathcal E(\chi_2, \ldots, \chi_d)$.
        Suppose $\HT_s(\rho')=t_{1, s}$, $\HT_s(\chi_i)=t_{i, s}$.
         Then define
       $$d_{\Nek}(\rho', \rho)= \# \{(i, s) \mid 2 \leq i \leq d, 0 \leq s \leq f-1, t_{i, s} > t_{1, s}  \}.$$
\end{enumerate}
\end{defn}

\begin{rem} \label{remNek}
It is clear that these definition of $d_{\Nek}$ are compatible with each other, i.e., the following statements holds:
\begin{enumerate}
  \item Suppose $\huaL, \huaL' \in \Mod_{\huaS_{\mathcal O_E}}^{\varphi}$ as in Definition \ref{dimNek}(1), then $d_{\Nek}(\huaL, \huaL')=d_{\Nek}(\barl, \barl').$

  \item Suppose $\hat \huaL, \hat \huaL' \in \Mod_{\huaS_{\mathcal O_E}}^{\varphi, \Ghat}$ are crystalline, where $\rk(\hat \huaL')=1$, $\rk(\hat \huaL)=d-1$ and $\hat \huaL$ is upper triangular. Let $\rho =\hat T(\hat \huaL)\otimes_{\mathcal O_E}E, \rho' =\hat T(\hat \huaL')\otimes_{\mathcal O_E}E$ be the associated crystalline representations. Then
      $d_{\Nek}(\rho', \rho)=d_{\Nek}(\hat \huaL, \hat\huaL').$
\end{enumerate}
\end{rem}

\begin{rem} \label{remdim}
With notations in Statement (3) of Definition \ref{dimNek}, we have
$ d_{\Nek}(\rho', \rho) = \dim_E \Ext_{\textnormal{cris}}(\rho', \rho).$
See e.g., \cite[Prop. 1.24]{Nek93}.
\end{rem}

\subsection{First crystalline lifting theorem}

\begin{thm} \label{lifting-1}
Suppose $\barhatm \in \mathcal E_{\vtshape}(\barhatn_d, \ldots, \barhatn_1)$, where $\barhatn_i =\overline{\hat\huaM}(t_{i, 0}, \ldots, t_{i, f-1}; a_i)$ are rank-$1$ modules with $t_{i, s} \in [0, p], \forall 1\leq i \leq d, 0 \leq s \leq f-1$, and $t_{i, s} \neq t_{j, s} \forall i \neq j$.
Suppose the following assumptions are satisfied:
\begin{enumerate}
  \item For any $1 \leq i < j \leq d$, there is no nonzero morphism $\barn_j \to \barn_i$.
  \item $\barm$ satisfies Condition \textbf{C-2(B)}.
\end{enumerate}
Then $\overline{\hat\huaM}$ has a lift $\hat\huaM \in \Mod_{\huaS_{\mathcal O_E}}^{\varphi, \Ghat}$ which is crystalline and upper triangular.
\end{thm}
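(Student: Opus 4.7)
The plan is to proceed by induction on the rank $d$, with $d=1$ handled by Lemma \ref{rank1} and $d=2$ supplied by the main local crystalline lifting theorem of \cite{GLS14}. For the inductive step, I consider the short exact sequence
\[ 0 \to \barhatL \to \barhatm \to \barhatn_d \to 0, \]
where $\barhatL := \Fil^{d-1}\barhatm$ lies in $\mathcal E_{\vtshape}(\barhatn_{d-1},\ldots,\barhatn_1)$. Both hypotheses of the theorem descend: there is still no nonzero morphism $\barn_j \to \barn_i$ for $1 \le i < j \le d-1$, and $(\barn_1,\ldots,\barn_{d-1})$ still satisfies Condition \textbf{(C-2B)}. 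By the inductive hypothesis, $\barhatL$ admits a crystalline upper triangular lift $\hatL \in \Mod_{\huaSOE}^{\varphi,\Ghat}$; by Lemma \ref{rank1}, $\barhatn_d$ lifts to a crystalline rank-$1$ module $\hatn_d$.

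The heart of the argument is to show that $\barhatm$ is the reduction of some element of $\Ext_{\cris}(\hatn_d,\hatL)$. I consider the reduction-mod-$\omega_E$ map
\[ \Phi\colon \Ext_{\cris}(\hatn_d,\hatL) \big/ \omega_E\Ext_{\cris}(\hatn_d,\hatL) \;\longrightarrow\; \Ext_{\vtshape}(\barhatn_d,\barhatL). \]
It is well defined because the reduction of any crystalline upper triangular $(\varphi,\Ghat)$-module automatically realizes the $(\varphi,\tau)$-shape, by Propositions \ref{diagonal}, \ref{shape} and Lemma \ref{taushape}. Injectivity of $\Phi$ follows by combining Proposition \ref{modulestr}(7) with the inclusion $\Ext_{\cris} \subseteq \Ext$. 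Once $\Phi$ is known to be surjective, the preimage of the class of $\barhatm$ supplies the desired crystalline upper triangular lift $\hatm$.

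Surjectivity of $\Phi$ will be established by a dimension comparison. By Proposition \ref{tensorE} and Remark \ref{remdim},
\[ \dim_{k_E}\!\bigl(\Ext_{\cris}(\hatn_d,\hatL)/\omega_E\bigr) \;\geq\; \rk_{\OE}\Ext_{\cris}(\hatn_d,\hatL) \;=\; d_{\Nek}(\hatL,\hatn_d). \]
For the matching upper bound, I would apply Proposition \ref{forgettau} (whose hypothesis is exactly Condition \textbf{(C-2B)}) to embed $\Ext_{\vtshape}$ into $\Ext_{\vshape}$, and then analyze the residual freedom in the shape of the off-diagonal entries from Proposition \ref{shape}, modulo the equivalences made explicit in the proof of Proposition \ref{modulestr}. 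Assumption (1) of the theorem is essential in this count: it rules out the extra degrees coming from Statement (3) of Proposition \ref{extKisin} and pins down the number of free parameters $y_{s,i,d}$ to be exactly $d_{\Nek}(\hatL,\hatn_d)$.

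I expect the main technical obstacle to be this second dimension inequality, namely matching the combinatorial count of surviving coefficients $y_{s,i,d}$ in the last column of the $\varphi$-matrices $A_s$ precisely against $d_{\Nek}(\hatL,\hatn_d)$. Once this is in hand, injectivity of $\Phi$ forces bijectivity and the theorem follows.
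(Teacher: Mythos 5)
Your proposal is correct in substance, and the overall strategy (induction on $d$, then a pigeonhole argument comparing a lower bound from $\Ext_{\textnormal{cris}}$ plus the Nekov\'{a}\v{r} formula against an upper bound from the $\varphi$-shape) is exactly the one in the paper. But you run the induction in the \emph{dual} direction: you peel off the top rank-$1$ quotient $\barhatn_d$ and view $\barhatm$ as an element of $\Ext(\barhatn_d, \barhatL)$ with $\barhatL$ the rank-$(d-1)$ subobject, whereas the paper peels off the bottom rank-$1$ subobject $\barhatn_1$ and views $\barhatm$ as an element of $\Ext(\barhatm_2, \barhatn_1)$ with $\barhatm_2$ the rank-$(d-1)$ quotient. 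Both decompositions satisfy Proposition \ref{forgettau} (take $r=d-1$ in your case, $r=1$ in the paper's), and both inherit hypotheses (1) and (2) for the inductive step, so neither choice buys a technical advantage --- they are genuinely interchangeable. One small caution: as written, Definition \ref{dimNek} and Remark \ref{remdim} take the rank-$1$ object to be the subobject on the module side (equivalently, the quotient on the Galois side), so the literal formula for $d_{\Nek}(\hatL,\hatn_d)$ counts pairs with $t_{i,s} > t_{d,s}$, which is \emph{not} the quantity you need. In your decomposition the relevant Bloch--Kato dimension is $\dim_E H^1_f\bigl(G_K,\Hom(\That(\hatL),\That(\hatn_d))\bigr) = \#\{(i,s): t_{d,s}>t_{i,s}\}$, which is also the number of free parameters $y_{s,i,d}$ in the last column in Proposition \ref{shape}; so the count does match, but you should state the dual of Definition \ref{dimNek}/Remark \ref{remdim} explicitly rather than cite them verbatim. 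With that adjustment the proof goes through and is equivalent to the paper's.
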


\begin{proof}
We prove by induction on $d$. The case $d=1$ is trivial from Proposition \ref{rank1}. Suppose the theorem is true for $d-1$, and we now prove it for $d$.

Suppose $\barhatm \in \Ext_{\vtshape}(\barhatm_2, \barhatm_1)$, where $\barhatm_2 \in \mathcal E_{\varphi, \tau- \textnormal{shape}}(\barhatn_{d}, \ldots, \barhatn_2)$ is of rank $d-1$, and $\barhatm_1$ is of rank 1. We denote
$d_{\Nek} := d_{\Nek}(\barm_2, \barm_1).$
Because of assumption (2), by Proposition \ref{forgettau}, we have the injective homomorphism
$$\Ext_{\vtshape} (\barhatm_2, \barhatm_1) \hookrightarrow \Ext_{\vshape}(\barm_2, \barm_1).$$
And because of assumption (1), by Proposition \ref{shapesubmodule} and the definition of $\Ext_{\vshape}$, $\Ext_{\vshape}(\barm_2, \barm_1)$ is a $k_E$-vector space of dimension at most $d_{\Nek}$. So we have that $\dim_{k_E} \Ext_{\vtshape} (\barhatm_2, \barhatm_1) \leq d_{\cris}$.

By the induction hypothesis (for $\barhatm_2$) and Proposition \ref{rank1} (for $\barhatm_1$), we can take upper triangular crystalline lifts $\hat \huaM_1, \hat\huaM_2$ of $\overline{\hat\huaM}_1, \overline{\hat\huaM}_2$ respectively. By Proposition \ref{exttorep}, we have the natural map:
$ \Ext_{\textnormal{cris}} (\hat\huaM_2, \hat\huaM_1) \to \Ext_{\textnormal{cris}}(\That(\hatm_1), \That(\hatm_2)),$
which becomes an isomorphism after tensoring with $E$ by Proposition \ref{tensorE}, i.e.,:
\begin{eqnarray*}
  \Ext_{\textnormal{cris}} (\hatm_2, \hatm_1)\otimes_{\mathcal O_E}E &\simeq&
\Ext_{\textnormal{cris}}(\That(\hatm_1), \That(\hatm_2))\otimes_{\mathcal O_E}E\\
  &\simeq&\Ext_{\textnormal{cris}}(\That(\hat\huaM_1)\otimes_{\mathcal O_E}E, \That(\hat\huaM_2)\otimes_{\mathcal O_E}E).
\end{eqnarray*}

By Remark \ref{remdim} and Remark \ref{remNek},
$\dim_E \Ext_{\textnormal{cris}}(\That(\hat\huaM_1)\otimes_{\mathcal O_E}E, \That(\hat\huaM_2)\otimes_{\mathcal O_E}E)=d_{\Nek}.$
So we also have $\dim_E \Ext_{\textnormal{cris}} (\hatm_2, \hatm_1)\otimes_{\mathcal O_E}E=d_{\Nek}.$
Thus the $\mathcal O_E$-free part of $\Ext_{\textnormal{cris}} (\hatm_2, \hatm_1)$ has rank equal to $d_{\Nek}$.
Now the image of the injective homomorphism
$\Ext_{\textnormal{cris}} (\hatm_2, \hatm_1)/\omega_E \hookrightarrow \Ext(\barhatm_2, \barhatm_1)$
falls into $\Ext_{\vtshape} (\barhatm_2, \barhatm_1)$ by \cite[Cor. 5.10]{GLS14}. So we have the following injective homomorphism
$$\Ext_{\textnormal{cris}} (\hatm_2, \hatm_1)/\omega_E \hookrightarrow \Ext_{\vtshape} (\barhatm_2, \barhatm_1).$$
Now, the left hand side has $k_E$-dimension at least $d_{\Nek}$, and the right hand side has $k_E$-dimension at most $d_{\Nek}$. So in fact, the above homomorphism is an isomorphism, which means that every extension in $\Ext_{\vtshape} (\barhatm_2, \barhatm_1)$ has an upper triangular crystalline lift. And we finish the proof for $d$.
\end{proof}

\subsection{Second crystalline lifting theorem}
In order to prove our second crystalline lifting theorem, we need to introduce some definitions. We could have defined them earlier, but we did not need them until now.

\begin{defn} \label{Esquare}
Let $\barn_i \in \Mod_{\huaS_{k_E}}^{\varphi}$ be rank $1$ modules. Let
  $ \mathcal E_{\varphi -\textnormal{shape}}^{\square}(\barn_d, \ldots, \barn_1) $
  be the set consisting of sequences
  $\barm^{\square} = (\barm_{1, 2}, \barm_{1, 2, 3}, \ldots, \barm_{1, \ldots, d}),$
  where
  \begin{itemize}
    \item $\barm_{1, 2} \in \Ext_{\varphi -\textnormal{shape}}(\barn_2, \barn_1)$, and
    \item inductively, $\barm_{1, 2, \ldots, i+1} \in \Ext_{\varphi -\textnormal{shape}}(\barn_{i+1}, \barm_{1, 2, \ldots, i}), \forall 2\leq i \leq d-1$.
  \end{itemize}
Denote $\barm=\barm_{1, \ldots, d} \in \mathcal E(\barn_d, \ldots, \barm_1).$
We say that $\barm^{\square} = (\barm_{1, 2}, \barm_{1, 2, 3}, \ldots, \barm_{1, \ldots, d})$ gives a ``successive $\Ext$ structure" to $\barm$.
\end{defn}

\begin{rem}\label{remEsquare}
  \begin{enumerate}
    \item There is a natural ``forgetful" map of sets:
    $$\mathcal E_{\varphi -\textnormal{shape}}^{\square}(\barn_d, \ldots, \barn_1) \to \mathcal E(\barn_d, \ldots, \barm_1),$$
    where $\barm^\square \mapsto \barm$. The map is clearly surjective.
    \item It is easy to see that when $d=2$, we have the following natural bijection
    $$\mathcal E_{\varphi -\textnormal{shape}}^{\square}(\barn_2, \barn_1) = \Ext_{\varphi -\textnormal{shape}}(\barn_2, \barn_1).$$
    In particular, $\mathcal E_{\varphi -\textnormal{shape}}^{\square}(\barn_2, \barn_1)$ is endowed with a natural $k_E$-vector space structure. But when $d \geq 3$, we no longer have similar result.
  \end{enumerate}
\end{rem}

\begin{defn} \label{extsquare} 
 Suppose
 $\barm^\square =(\barm_{1, 2}, \barm_{1, 2, 3}, \ldots, \barm_{1, \ldots, d})
 \in \mathcal E_{\varphi -\textnormal{shape}}^{\square}(\barn_d, \ldots, \barn_1)$
 as in Definition \ref{Esquare},
 $\barm' \in \Mod_{\huaS_{k_E}}^{\varphi}$ a rank $1$ module.
Let
 $\Ext^{\square}_{\varphi-\textnormal{shape}}(\barm^\square, \barm')$
 be the set consisting of
 $(\barm_{1, 2}, \ldots, \barm_{1, \ldots, d}, \barl),$
 where $\barl \in \Ext_{\varphi-\textnormal{shape}}(\barm', \barm_{1, \ldots, d})$.
\end{defn}

\begin{rem} \label{remextsquare}
\begin{enumerate}
  \item It is clear that there is a natural bijective map:
$$  \Ext^{\square}_{\varphi-\textnormal{shape}}(\barm^\square, \barm') \to \Ext_{\varphi-\textnormal{shape}}(\barm, \barm'),$$
by sending $(\barm_{1, 2}, \ldots, \barm_{1, \ldots, d}, \barl)$ to $\barl$. So in particular, $  \Ext^{\square}_{\varphi-\textnormal{shape}}(\barm^\square, \barm')$ has a $k_E$-vector space structure.
  \item In the category $\Mod_{\huaS_{k_E}}^{\varphi, \Ghat}$, we can define similar sets $\mathcal E_{\vtshape}^{\square}(\barhatn_d, \ldots, \barhatn_1)$ and $\Ext^{\square}_{\vtshape}(\barhatm^\square, \barhatm')$.
\end{enumerate}
\end{rem}

We can define similar $\square$-extensions for representations, but we need to reverse the orders, so that it will be compatible with the $\square$-extensions for modules.
\begin{defn}
Let $? =G_K$ or $G_{\infty}$, let $\chi_i$ be some ($k_E$ or $\mathcal O_E$)-characters of $?$.
\begin{enumerate}
  \item Let $\mathcal E_{?}^{\square}(\chi_1, \ldots, \chi_d)$ be the set of sequences
$\rho^{\square} = (\rho_{1, 2}, \rho_{1, 2, 3}, \ldots, \rho_{1, \ldots, d})$
where $\rho_{1, 2} \in \Ext_{?}(\chi_1, \chi_2)$, and inductively, $\rho_{1, \ldots, i+1} \in \Ext_{?}(\rho_{1, \ldots, i}, \chi_{i+1})$.

\item Suppose $\rho^{\square} = (\rho_{1, 2}, \rho_{1, 2, 3}, \ldots, \rho_{1, \ldots, d})$ as above, and $\rho'$ a character of ${?}$, then let
      $\Ext_{?}^{\square}(\rho', \rho^{\square})$ be the set consisting of
      $(\rho_{1, 2}, \rho_{1, 2, 3}, \ldots, \rho_{1, \ldots, d}, r),$
      where $r \in \Ext_{?}(\rho', \rho_{1, \ldots, d})$.
\end{enumerate}
\end{defn}

\begin{thm} \label{lifting-2}
Let $\barhatm^{\square} \in \mathcal E_{\vtshape}^{\square}(\barhatn_d, \ldots, \barhatn_1)$ where $\barhatn_i =\barhatm(t_{i, 0}, \ldots, t_{i, f-1}; a_i)$ with $t_{i, s} \in [0, p]$ such that $t_{i, s} \neq t_{j, s} \forall i \neq j$.
Let $\overline{\chi}_i = \hat{T}(\barhatn_i)$, and fix $\chi_i$ a crystalline character which lifts $\overline{\chi}_i$ and $\HT_s(\chi_i)=\{ t_{i, s} \}$.
Suppose the following assumptions are satisfied:
\begin{enumerate}
  \item $\{\barchi_1, \ldots, \barchi_d\}$ has a unique model with respect to $\{\textnormal{col}_0(\WT(\barm)), \ldots, \textnormal{col}_{f-1}(\WT(\barm))\}$, where we regard $\textnormal{col}_s(\WT(\barm))$ as an (unordered) set of numbers for all $s$.

  \item $\barrho^{\square}:=\hat T_{\huaS}(\barhatm^{\square})$ satisfies Condition \textbf{C-2(A)}.
\end{enumerate}
Then there exists a
$\rho'^{\square} \in \mathcal E^{\square}_{G_K, \textnormal{cris}}(\chi_1, \ldots, \chi_d)$
such that
\begin{itemize}
  \item $\barrho'^{\square} =\barrho^{\square}$ as elements in $\mathcal E^{\square}_{G_K}(\barchi_1, \ldots, \barchi_d)$ (in particular $\barrho' \simeq \barrho$), and
  \item $\HT_s(\rho')= \col_s(\WT(\barm))$ as sets of numbers.
\end{itemize}
\end{thm}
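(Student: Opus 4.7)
The plan is to proceed by induction on $d$, mimicking the pigeonhole strategy of Theorem \ref{lifting-1} but with the roles of Condition \textbf{(C-2B)} and the ``no nonzero morphism'' hypothesis now taken over by Condition \textbf{(C-2A)} and the uniqueness-of-model assumption~(1). The base case $d=1$ is immediate from Lemma \ref{rank1}. For the inductive step I would first observe that hypotheses~(1) and~(2) restrict to the ordered subsequence $(\barhatn_1, \ldots, \barhatn_{d-1})$: (2) trivially, and~(1) because any two distinct models for $\{\barchi_1, \ldots, \barchi_{d-1}\}$ with respect to the first $d-1$ columns of $\WT(\barm)$ could be augmented by the common $\barchi_d$-row $(t_{d,0}, \ldots, t_{d,f-1})$ to yield two distinct models for $\{\barchi_1, \ldots, \barchi_d\}$, contradicting~(1). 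The induction hypothesis thus supplies a crystalline successive extension $\rho'^{\square}_{1,\ldots,d-1}$ lifting $\barrho^{\square}_{1,\ldots,d-1} := \hat T_{\huaS}(\barhatm^{\square}_{1,\ldots,d-1})$, and the task reduces to finding $\rho'_{1,\ldots,d} \in \Ext_{\cris}(\rho'_{1,\ldots,d-1}, \chi_d)$ whose reduction modulo $\omega_E$ equals $\barrho_{1,\ldots,d}$ as a $G_K$-extension class.

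Set $d_{\Nek} := \#\{(i,s) : 1 \leq i \leq d-1,\ t_{d,s} > t_{i,s}\}$. Applying the Bloch--Kato dimension formula to $V := \Hom(\rho'_{1,\ldots,d-1}, \chi_d)$ --- whose Hodge--Tate weights are $t_{d,s} - t_{i,s}$, and whose $G_K$-invariants vanish by \textbf{(C-2A)} (since $\barchi_i \neq \barchi_d$ for $i<d$) --- gives $\dim_E \Ext_{\cris}(\rho'_{1,\ldots,d-1}, \chi_d) = d_{\Nek}$. Moreover \textbf{(C-2A)} lets one invoke the lemma preceding the definition of $H^1_f$ in Section~5, forcing the relevant integral $H^1(G_K, -)$ to be torsion-free; hence $\Ext_{\cris}(\rho'_{1,\ldots,d-1}, \chi_d)$ is $\OE$-free of rank $d_{\Nek}$, and its reduction modulo $\omega_E$ embeds into $\Ext_{G_K}(\barrho_{1,\ldots,d-1}, \barchi_d)$ with image of $k_E$-dimension exactly $d_{\Nek}$. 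Composing further with the injection $\Ext_{G_K}(\barrho_{1,\ldots,d-1}, \barchi_d) \hookrightarrow \Ext_{G_{\infty}}(\barrho_{1,\ldots,d-1}, \barchi_d)$ from Proposition \ref{injectivity} and then identifying the target with Kisin-module extensions via $T_{\huaS}$, the image lands --- by the same shape-control argument used in the proof of Theorem \ref{lifting-1} --- inside the sub-$k_E$-vector space $\Ext_{\vshape}(\barn_d, \barm_{1,\ldots,d-1})$. Hypothesis~(1) together with Remark \ref{3nothappen} makes Proposition \ref{shape} applicable, and together with Proposition \ref{shapesubmodule} it bounds $\dim_{k_E} \Ext_{\vshape}(\barn_d, \barm_{1,\ldots,d-1}) \leq d_{\Nek}$, the free parameters being precisely the entries $y_{s,i,d}$ for which $t_{d,s} > t_{i,s}$. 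By dimension count the injective composite must be an isomorphism.

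Pulling the specific element $\barm_{1,\ldots,d} \in \Ext_{\vshape}(\barn_d, \barm_{1,\ldots,d-1})$ back through this isomorphism produces the sought $\rho'_{1,\ldots,d} \in \Ext_{\cris}(\rho'_{1,\ldots,d-1}, \chi_d)$; its reduction and the target $\barrho_{1,\ldots,d}$ have the same image in $\Ext_{G_{\infty}}(\barrho_{1,\ldots,d-1}, \barchi_d)$, and the injectivity of Proposition \ref{injectivity} promotes this to equality in $\Ext_{G_K}(\barrho_{1,\ldots,d-1}, \barchi_d)$, finishing the induction; matching of Hodge--Tate weights is automatic. The main obstacle throughout is forcing both dimensions to simultaneously equal $d_{\Nek}$: the upper bound on $\Ext_{\vshape}(\barn_d, \barm_{1,\ldots,d-1})$ genuinely needs the uniqueness-of-model hypothesis to rule out the extra-term case of Proposition \ref{extKisin}(3); the lower bound $d_{\Nek}$ on the reduction of the crystalline $\Ext$ needs \textbf{(C-2A)} to kill $H^1$-torsion; and the final descent from $G_{\infty}$-equivalence back to $G_K$-equivalence of extension classes is a third place where \textbf{(C-2A)} is indispensable.
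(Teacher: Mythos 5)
Your proof follows the same inductive/pigeonhole strategy as the paper, and the decomposition (peeling off $\chi_d$ at the bottom, i.e.\ $\barn_d$ at the top of the Kisin module, and applying the induction to $\rho'^{\square}_{1,\ldots,d-1}$) agrees with the paper, as do the dimension computations via Bloch--Kato and the torsion-freeness of $H^1(G_K,-)$ under \textbf{(C-2A)}. The gap is in the sentence where you claim the image ``lands --- by the same shape-control argument used in the proof of Theorem~\ref{lifting-1} --- inside the sub-$k_E$-vector space $\Ext_{\vshape}(\barn_d, \barm_{1,\ldots,d-1})$''.

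In Theorem~\ref{lifting-1} that step never leaves the category of $(\varphi,\hat G)$-modules: the reduction map goes directly from $\Ext_{\textnormal{cris}}(\hatm_2,\hatm_1)/\omega_E$ into $\Ext(\barhatm_2,\barhatm_1)$, and Corollary~5.10 of \cite{GLS14} puts the image into the shape-controlled subspace. In Theorem~\ref{lifting-2} the route is through $G_K$- and $G_{\infty}$-representations and then \emph{back} to Kisin modules via Lemma~4.4 of \cite{Oze13}. After applying that lemma to the reduction of a crystalline lift you obtain a torsion Kisin module $\barl$ with a filtration whose rank-$(d-1)$ piece $\barl_{1,\ldots,d-1}$ carries a successive-extension structure $\barl^{\square}_{1,\ldots,d-1}$; by assumption~(1) and full faithfulness of $T_{\huaS}$ you know $\barl_{1,\ldots,d-1}\simeq\barm_{1,\ldots,d-1}$, but you do not yet know that $\barl^{\square}_{1,\ldots,d-1}=\barm^{\square}_{1,\ldots,d-1}$ --- and $\Ext_{\vshape}(\barn_d,\barm_{1,\ldots,d-1})$ is defined relative to that fixed datum. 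This is precisely the purpose of the auxiliary statement (S2) (injectivity of $T_{\huaS}^{\square}$ on $\mathcal E^{\square}_{\vshape}$) that the paper proves simultaneously with (S1) in its induction; as the paper puts it, ``with only Lemma~4.4 of \cite{Oze13}, we do not know if $\barl_2^{\square}$ and $\barm_2^{\square}$ are the same!'' and ``this is the whole point that we have introduced all these $\square$ notations''. Your induction carries only (S1), so when you reach this step you have no (S2) at level $d-1$ to invoke, and no substitute argument is given. The remaining ingredients (the two dimension bounds and the final descent from $G_{\infty}$- to $G_K$-equivalence via Proposition~\ref{injectivity}) are correct and match the paper.
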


\begin{proof}
We say \textbf{(S1)} is true if the statement in the theorem is true. We say \textbf{(S2)} is true if the following natural map of sets is injective (note that here injective means same image implies same preimage)
\begin{equation*} \tag{Map-S2} \label{Map-S2}
T_{\huaS}^{\square}: \mathcal E_{\varphi -\textnormal{shape}}^{\square}(\barn_d, \ldots, \barn_1) \to \mathcal E_{G_{\infty}}^{\square} (\barchi_1, \ldots, \barchi_d),
\end{equation*}
where for brevity, we write $\mathcal E_{G_{\infty}}^{\square} (\barchi_1, \ldots, \barchi_d)$ to mean $\mathcal E_{G_{\infty}}^{\square} (\barchi_1\mid_{G_{\infty}}, \ldots, \barchi_d\mid_{G_{\infty}})$. We prove (S1) and (S2) at the same time by induction on $d$.
We first prove for $d=2$. Note that in this case, since $\mathcal E_{?}^\square (\ast, \ast) = \Ext_{?}(\ast, \ast)$, we actually do not need $\square$.

Suppose $\barhatm \in \mathcal E_{\vtshape} (\barhatm_2, \barhatm_1)$,
$\barrho  \in \Ext_{G_K}(\barrho_1, \barrho_2)$, and denote $\rho_1, \rho_2$ the crystalline liftings of $\barrho_1, \barrho_2$ respectively. (The change of notations from $\barhatn_i$ to $\barhatm_i$, and from $\chi_i$ to $\rho_i$ is convenient for the induction process.) Denote $d_{\textnormal{cris}}=d_{\textnormal{cris}}(\barm_2, \barm_1)$.

Consider the following composite of homomorphisms:
$$f: \Ext_{\textnormal{cris}}(\rho_1, \rho_2) \twoheadrightarrow \Ext_{\textnormal{cris}}(\rho_1, \rho_2)/\omega_E \hookrightarrow \Ext_{G_K}(\barrho_1, \barrho_2) \hookrightarrow \Ext_{G_{\infty}}(\barrho_1, \barrho_2).$$
Where the last map is injective because of assumption (2) and Proposition \ref{injectivity}.

By Proposition \ref{tensorE},
$\Ext_{\textnormal{cris}}(\rho_1, \rho_2)\otimes_{\mathcal O_E} E = \Ext_{\textnormal{cris}}(\rho_1\otimes_{\mathcal O_E} E, \rho_2\otimes_{\mathcal O_E} E).$
Since $\dim_{E} \Ext_{\textnormal{cris}}(\rho_1\otimes_{\mathcal O_E} E, \rho_2\otimes_{\mathcal O_E} E)$ is equal to $d_{\textnormal{cris}}(\rho_1, \rho_2)=d_{\textnormal{cris}}$. So the $\mathcal O_E$-free part of $\Ext_{\textnormal{cris}}(\rho_1, \rho_2)$ is of rank $d_{\textnormal{cris}}$. So we have
$$\dim_{k_E} (\textnormal{Im}(f)) \geq d_{\textnormal{cris}}.$$

Now let
$\Ext_{G_{\infty}}^{\textnormal{cris}}(\barrho_1, \barrho_2)[\WT(\barm)]$
be the subset of $\Ext_{G_{\infty}}(\barrho_1, \barrho_2)$ where
$0 \to \barrho_2 \to \bar r \to \barrho_1 \to 0$
is in $\Ext_{G_{\infty}}^{\textnormal{cris}}(\barrho_1, \barrho_2)[\WT(\barm)]$, if there exists $W$ crystalline such that
\begin{itemize}
  \item $\HT_s(W)= \col_s(\WT(\barm))$ as sets for all $s$, and
  \item $\overline{W}\mid_{G_{\infty}} \simeq \bar r$.
\end{itemize}
Remark that it is not clear from the definition if $\Ext_{G_{\infty}}^{\textnormal{cris}}(\barrho_1, \barrho_2)[\WT(\barm)]$ is a sub-vector space of $\Ext_{G_{\infty}}(\barrho_1, \barrho_2)$. We have
$\textnormal{Im}(f) \subseteq  \Ext_{G_{\infty}}^{\textnormal{cris}}(\barrho_1, \barrho_2)[\WT(\barm)],$
and so we have
\begin{equation*}\tag{lower bound} \label{lower bound}
\#(\Ext_{G_{\infty}}^{\textnormal{cris}}(\barrho_1, \barrho_2)[\WT(\barm)]) \geq |k_E|^{d_{\Nek}}.
\end{equation*}

Now consider the homomorphism
$$g : \Ext_{\varphi-\textnormal{shape}}(\barm_2, \barm_1) \to \Ext_{G_{\infty}}(\barrho_1, \barrho_2).$$
Suppose
$x_{\bar r}: \quad 0 \to \barrho_2 \to \overline r \to \barrho_1 \to 0$
is an element in $\Ext_{G_{\infty}}^{\textnormal{cris}}(\barrho_1, \barrho_2)[\WT(\barm)]$, then there exists crystalline $W$ such that $\overline W\mid_{G_{\infty}} \simeq \overline r$. Let $\hat \huaL \in \Mod_{\huaS_{\mathcal O_E}}^{\varphi, \Ghat}$ be the module associated to $W$. Then by \cite[Lem. 4.4]{Oze13}, there exists a sequence
$x_{\barl}: \quad 0 \to \barl_1 \to \barl \to \barl_2 \to 0 ,$
such that $T_{\huaS}(x_{\barl}) = x_{\overline r}$. Note that we have $\barl \in \Ext_{\varphi-\textnormal{shape}}(\barl_2, \barl_1)$ by Proposition \ref{shape}.
However, because of assumption (1), we must have $\barl_i =\barm_i, i=1, 2$, and so $\barl \in \Ext_{\varphi-\textnormal{shape}}(\barm_2, \barm_1)$.
That is to say, we must have
$\textnormal{Im}(g) \supseteq \Ext_{G_{\infty}}^{\textnormal{cris}}(\barrho_1, \barrho_2)[\WT(\barm)],$
and so,
\begin{equation*}\tag{upper bound} \label{upper bound}
\#(\Ext_{G_{\infty}}^{\textnormal{cris}}(\barrho_1, \barrho_2)[\WT(\barm)]) \leq |k_E|^{d_{\Nek}}.
\end{equation*}

Combining the lower bound and upper bound we obtained above, we must have
$$\#(\Ext_{G_{\infty}}^{\textnormal{cris}}(\barrho_1, \barrho_2)[\WT(\barm)]) = |k_E|^{d_{\Nek}},$$
and so
$$\textnormal{Im}(f)=\Ext_{G_{\infty}}^{\textnormal{cris}}(\barrho_1, \barrho_2)[\WT(\barm)]=\textnormal{Im}(g).$$

Now, we can prove (S1) for $d=2$. Since $\barrho \mid_{G_{\infty}} \in \textnormal{Im}(g)$, so $\barrho \mid_{G_{\infty}} \in \textnormal{Im}(f)$, i.e., there is an upper triangular crystalline lift $\rho' \in \Ext_{\textnormal{cris}}(\rho_1, \rho_2)$ such that $\barrho'\mid_{G_{\infty}} = \barrho$. However, we must have $\barrho'= \barrho$ as elements in $\Ext_{G_K}(\barrho_1, \barrho_2)$, because the map
$\Ext_{G_K}(\barrho_1, \barrho_2) \to \Ext_{G_{\infty}}(\barrho_1, \barrho_2)$
is injective. And this proves (S1). Note that since $g$ is injective, so the map \ref{Map-S2} is injective, and so (S2) is also true.


Now, we can use induction to proceed from $d-1$ to $d$. Suppose both (S1) and (S2) are true when the dimension is $\leq d-1$, and we now prove them when the dimension becomes $d$. Now we have
$\barhatmsq \in \Ext_{\vtshape}^{\square}(\barhatmsq_2, \barhatm_1)$, where $\barhatmsq_2  \in \mathcal E_{\vtshape}^{\square}(\barhatn_d, \ldots, \barhatn_2)$ is of rank $d-1$, and $\barhatm_1$ is of rank 1. So $\barrhosq \in \Ext_{G_K}^{\square} (\barrho_1, \barrhosq_2)$. Then we can apply the induction hypothesis to $\barrhosq_2$ to find an upper triangular crystalline lift $\rhosq_2$. And now the proof is almost verbatim as the $d=2$ case, by using the $\square$-variants of above maps, i.e., let
$$f: \Ext_{\cris}^{\square}(\rho_1, \rhosq_2) \twoheadrightarrow \Ext_{\cris}^{\square}(\rho_1, \rho_2^{\square})/\omega_E \hookrightarrow \Ext_{G_K}^{\square}(\barrho_1, \barrho_2^{\square}) \hookrightarrow \Ext_{G_{\infty}}^{\square}(\barrho_1, \barrho_2^{\square}),$$
then $\textnormal{Im}(f) \subseteq \Ext_{G_{\infty}}^{{\square}, \textnormal{cris}}(\barrho_1, \barrho_2^{\square})[\WT(\barm)].$

Then consider
$$g : \Ext_{\varphi-\textnormal{shape}}^{\square}(\barm_2^{\square}, \barm_1) \to \Ext_{G_{\infty}}^{\square}(\barrho_1, \barrho_2^{\square}).$$
Suppose $\bar r \in \Ext_{G_{\infty}}^{{\square}, \textnormal{cris}}(\barrho_1, \barrho_2^{\square})[\WT(\barm)]$, then by \cite[Lem. 4.4]{Oze13}, we know that there exists some $\barl \in \Ext_{\varphi-\textnormal{shape}}^{\square}(\barl_2^{\square}, \barl_1)$ such that $T_{\textnormal{cris}}(\barl)=\bar r$.
Using our assumption (1), we know that $\barl_1=\barm_1$ and $\barl_2^{\square} \in \mathcal E^{\square}_{\vshape}(\barn_d, \ldots, \barn_2)$. But with only \cite[Lem. 4.4]{Oze13}, we do not know if $\barl_2^{\square}$ and $\barm_2^{\square}$ are the same! The \textbf{key point} here is that we can apply our induction hypothesis on the statement (S2) to conclude $\barl_2^{\square}=\barm_2^{\square}$! And this is the whole point that we have introduced all these $\square$-notations (when $d=2$, we do not need them).

So with above argument, we still have
$\textnormal{Im}(g) \supseteq \Ext_{G_{\infty}}^{{\square}, \textnormal{cris}}(\barrho_1, \barrho_2^{\square})[\WT(\barm)].$
And then we can use similar argument as in the $d=2$ case to conclude that both (S1) and (S2) are true.
\end{proof}

\subsection{Main local theorem}
\begin{thm} \label{main}
With notations in \textnormal{\textbf{(CRYS)}}, and suppose that the reduction $\overline \rho$ is upper triangular. Suppose that
\begin{itemize}
  \item Condition \textnormal{\textbf{(C-1)}} is satisfied, and
  \item Either \textnormal{\textbf{(C-2A)}} or \textnormal{\textbf{(C-2B)}} is satisfied.
\end{itemize}
Then there exists an upper triangular crystalline lift $\rho'$ of $\overline \rho$ such that $\HT_{s}(\rho)=\HT_{s}(\rho'), \forall s$.
\end{thm}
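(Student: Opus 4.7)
The plan is to reduce Theorem \ref{main} to one of the two crystalline lifting theorems of this section, selecting \textbf{Theorem \ref{lifting-1}} when \textbf{(C-2B)} holds and \textbf{Theorem \ref{lifting-2}} when \textbf{(C-2A)} holds. Before branching, I would set up a common module-theoretic picture. By Proposition \ref{diagonal}, the reduction $\barm$ lies in some $\mathcal E(\barn_d, \ldots, \barn_1)$ with $\barn_i = \barm(t_{i,0}, \ldots, t_{i,f-1}; a_i)$ and with $\{t_{1,s}, \ldots, t_{d,s}\} = \HT(D_s) = \boldr_s$ as sets for every $s$; in particular the $t_{i,s}$ lie in $[0,p]$ and are pairwise distinct in each column, matching the hypotheses shared by Theorems \ref{lifting-1} and \ref{lifting-2}. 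Condition \textbf{(C-1)} and Lemma \ref{comparerank1} (together with Remark \ref{3nothappen}) then guarantee that there is no nonzero morphism $\barn_j \to \barn_i$ for $i < j$, so the exceptional case of Proposition \ref{extKisin}(3) never triggers and Proposition \ref{shape} applies, yielding $\barm \in \mathcal E_{\vshape}(\barn_d, \ldots, \barn_1)$. Equipping $\barhatm$ with the $\tau$-shape of Lemma \ref{taushape} upgrades this to $\barhatm \in \mathcal E_{\vtshape}(\barhatn_d, \ldots, \barhatn_1)$.

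If \textbf{(C-2B)} is the condition that holds, I would then invoke Theorem \ref{lifting-1} directly: its two hypotheses are precisely what has just been arranged. It produces an upper triangular crystalline $\hat\huaM \in \Mod_{\huaSOE}^{\varphi, \Ghat}$ lifting $\barhatm$, whose successive rank-$1$ constituents are $\huaM(t_{i,0}, \ldots, t_{i,f-1}; \hat a_i)$ with $\hat a_i$ lifting $a_i$. Setting $\rho' := \That(\hat\huaM) \otimes_{\OE} E$, Lemma \ref{rank1}(3) shows that $\HT_s(\rho') = \{t_{i,s}\}_{i=1}^d = \HT_s(\rho)$ for every $s$, while Theorem \ref{hatMcoeff}(1) shows that the reduction of $\rho'$ is $\barrho$; upper triangularity on the representation side is inherited from upper triangularity on the module side via the anti-equivalence of Theorem \ref{hatMcoeff}(3).

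If instead \textbf{(C-2A)} holds, I would apply Theorem \ref{lifting-2}. The given upper triangular filtration on $\barrho$ provides a distinguished successive extension structure $\barrhosq \in \mathcal E_{G_K}^{\square}(\barchi_1, \ldots, \barchi_d)$, and transporting this filtration through the anti-equivalence $\That$ gives a corresponding $\barhatmsq \in \mathcal E_{\vtshape}^{\square}(\barhatn_d, \ldots, \barhatn_1)$ whose image under $\hat T_{\huaS}$ is $\barrhosq$. Condition \textbf{(C-1)} supplies hypothesis (1) of Theorem \ref{lifting-2} because $\col_s(\WT(\barm)) = \HT_s(D)$ as sets, and \textbf{(C-2A)} supplies hypothesis (2). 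Choosing crystalline lifts $\chi_i$ of $\barchi_i$ with $\HT_s(\chi_i) = \{t_{i,s}\}$ via Lemma \ref{rank1}, Theorem \ref{lifting-2} then produces a $\rho'^{\square} \in \mathcal E_{G_K, \cris}^{\square}(\chi_1, \ldots, \chi_d)$ whose reduction equals $\barrhosq$ on the nose and whose Hodge-Tate weights in each embedding match those of $\rho$; forgetting the $\square$-structure gives the desired crystalline lift $\rho'$.

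The main obstacle is not any deep calculation but rather bookkeeping about the direction of extensions: the $\square$-conventions on modules (bottom-up, starting from the smallest sub) and on Galois representations (top-down, starting from the top quotient) are reversed because $\That$ is contravariant, so in the \textbf{(C-2A)} case one has to line up the filtration $\{\Fil^i \barrho\}$ with a compatible successive-extension structure on $\barhatm$ before Theorem \ref{lifting-2} can be invoked. Once these conventions are pinned down, the theorem follows at once from the two lifting theorems together with the shape analysis assembled in Sections 2--4.
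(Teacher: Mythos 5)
Your argument takes essentially the same route as the paper: establish $\barhatm \in \mathcal E_{\vtshape}(\barhatn_d,\ldots,\barhatn_1)$ and then branch to Theorem \ref{lifting-1} (under \textbf{(C-2B)}) or Theorem \ref{lifting-2} (under \textbf{(C-2A)}). The only cosmetic difference is that the paper obtains the combined $\varphi$- and $\tau$-shape in one stroke by citing Corollary 5.10 of \cite{GLS14}, whereas you reassemble it from the internal results (Proposition \ref{diagonal}, Remark \ref{3nothappen}, Proposition \ref{shape}, Lemma \ref{taushape}); this is fine, though strictly speaking one should note that the bases produced by Proposition \ref{shape} and Lemma \ref{taushape} can be taken to coincide, which is exactly what the cited corollary of \cite{GLS14} guarantees.
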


\begin{proof}
By \cite[Cor. 5.10]{GLS14}, we have
$\barhatm \in \mathcal E_{\vtshape}(\barhatn_d, \ldots, \barhatn_1).$
When \textnormal{\textbf{(C-2B)}} is satisfied, we can apply Theorem \ref{lifting-1}. When \textnormal{\textbf{(C-2A)}} is satisfied, we can equip $\barhatm$ with a successive $\Ext$ structure:
$\barhatm^{\square} \in
\mathcal E_{\vtshape}^{\square}(\barhatn_d, \ldots, \barhatn_1),$
and then apply Theorem \ref{lifting-2}.
\end{proof}

\begin{corollary}\label{maincorollary}
With notations in \textnormal{\textbf{(CRYS)}}, and suppose that the reduction $\overline \rho \in \mathcal E(\barchi_1, \ldots, \barchi_d)$ is upper triangular. Suppose one of the following conditions is satisfied:
\begin{enumerate}
\item $K=\Qp$, the differences between two elements in $\HT(D_0)$ are never $p-1$. And $\overline \chi_i^{-1} \overline \chi_j \neq \mathbbm{1}, \overline \varepsilon_p, \forall i<j.$

  \item For each $s$, the differences between two elements in $\HT(D_s)$ are never $1$. And for one $s_0$, $p-1 \notin \HT(D_{s_0})$.And $\overline \chi_i^{-1} \overline \chi_j \neq \mathbbm{1}, \overline \varepsilon_p, \forall i<j.$

  \item For each $s$, the differences between two elements in $\HT(D_s)$ are never $1$. For one $s_0$, $p-1 \notin \HT(D_{s_0})$. For one $0 \leq s_0' \leq f-1$, $p \notin \HT(D_{s_0'})$ (it is possible that $s_0 =s_0'$).

   \item For each $s$, $\HT(D_s) \subseteq [0, p-1]$. And for one $s_0$, $p-1 \notin \HT(D_{s_0})$.
  \end{enumerate}
Then there exists an upper triangular crystalline lift $\rho'$ of $\overline \rho$ such that $\HT_{s}(\rho)=\HT_{s}(\rho'), \forall s$.
\end{corollary}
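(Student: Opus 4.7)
The plan is simply to deduce the corollary from Theorem \ref{main} by checking, for each of the four listed conditions, that both \textbf{(C-1)} and one of \textbf{(C-2A)}, \textbf{(C-2B)} are satisfied. The verifications of \textbf{(C-1)} in conditions (1), (2), (4) are already recorded in the lemma immediately following the definition of \textbf{(C-1)} (whose three items match precisely the Hodge--Tate hypotheses in (1), (2), (4)); for (3) the Hodge--Tate hypothesis is identical to item (ii) of that lemma, so \textbf{(C-1)} again holds.

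It remains to handle \textbf{(C-2A)}/\textbf{(C-2B)} case by case. Conditions (1) and (2) each include the explicit hypothesis $\overline\chi_i^{-1}\overline\chi_j \neq \mathbbm{1}, \overline\varepsilon_p$ for all $i<j$, which is the definition of \textbf{(C-2A)}, so Theorem \ref{main} applies. For (4) we observe that $\HT(D_s) \subseteq [0,p-1]$ for every $s$ means that $p \notin \HT(D_s)$ for any $s$; by Proposition \ref{diagonal}, any rank-$1$ summand $\barn_j = \overline{\huaM}(t_{j,0},\ldots,t_{j,f-1};a_j)$ appearing in a filtration of $\barm$ must satisfy $t_{j,s} \in \HT(D_s)$, hence $t_{j,s} \neq p$ for all $s$. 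In particular no such $\barn_j$ can equal $\overline{\huaM}(p,\ldots,p;a_j)$, so \textbf{(C-2B)} is trivially verified.

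For condition (3), the extra hypothesis $p \notin \HT(D_{s_0'})$ for some $s_0'$ yields \textbf{(C-2B)} by exactly the same argument: no rank-$1$ factor $\barn_j$ in a filtration of $\barm$ can have $t_{j,s_0'} = p$, so none of them can be of the form $\overline{\huaM}(p,\ldots,p;a_j)$, and in particular no forbidden pair $(\barn_i,\barn_j) = (\overline{\huaM}(0,\ldots,0;a_i),\, \overline{\huaM}(p,\ldots,p;a_j))$ can arise. This exhausts all four cases, and in each the hypotheses of Theorem \ref{main} are satisfied, yielding the desired upper triangular crystalline lift with matching Hodge--Tate weights.

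No significant obstacle is anticipated: the entire content of the corollary is organising the hypotheses so that Theorem \ref{main} applies, and the two observations above (the lemma for \textbf{(C-1)}, and the vanishing of the weight $p$ from the multiset $\HT(D_{s_0'})$ to force \textbf{(C-2B)}) are both elementary.
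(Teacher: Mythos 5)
Your proof is correct and follows the same route the paper takes: verify \textbf{(C-1)} in all four cases via the lemma following its definition, then \textbf{(C-2A)} for cases (1)--(2) and \textbf{(C-2B)} for cases (3)--(4), and apply Theorem \ref{main}. The paper's own proof states these verifications without detail, whereas you spell out (correctly, using Proposition \ref{diagonal} to show $t_{j,s}\in \HT(D_s)$) why the absence of $p$ from $\HT(D_{s_0'})$ rules out any $\overline{\mathfrak M}(p,\ldots,p;a_j)$ factor and hence forces \textbf{(C-2B)}.
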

\begin{proof}
\textnormal{\textbf{(C-1)}} is satisfied in all the 4 listed conditions. \textnormal{\textbf{(C-2A)}} is satisfied in (1) and (2). \textnormal{\textbf{(C-2B)}} is satisfied in (3) and (4).
\end{proof}

By using Fontaine-Laffaille theory, we can also prove some results along the line of our main theorem.
\begin{thm} [{ \cite[Lem. 1.4.2]{BLGGT14}, \cite[Thm. 3.0.3]{GL14}  }] \label{BLGGT}
Let $K/\Qp$ be a finite unramified extension (we can allow $p=2$ here), $\rho: G_K \to \GL_d(\mathcal O_E)$ a crystalline representation such that $\barrho$ is upper triangular. Suppose either of the following is true:
\begin{enumerate}
  \item $\HT_s(\rho) \subseteq [0, p-2]$ (not necessarily distinct) for all $s$.
  \item $\HT_s(\rho) \subseteq [0, p-1]$ (not necessarily distinct) for all $s$, and $\rho$ is unipotent.
\end{enumerate}
Then there exists an upper triangular crystalline representation $\rho'$ such that $\HT_s(\rho')=\HT_s(\rho)$ for all $s$ and $\overline \rho'\simeq \overline \rho$.
\end{thm}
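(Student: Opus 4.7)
The plan is to invoke Fontaine-Laffaille theory, which provides an exact (anti-)equivalence of categories between $G_K$-stable $\mathcal O_E$-lattices in crystalline $E$-representations with Hodge-Tate weights in the appropriate range and an explicit category of filtered Frobenius modules over $W(k)\otimes_{\Zp}\mathcal O_E$, compatible with reduction modulo $\omega_E$. In case (1) this is the classical theory (HT weights in $[0,p-2]$). In case (2) one uses its extension to the boundary range $[0,p-1]$ restricted to the full subcategory of unipotent objects, where the equivalence and the compatibility with reduction mod $\omega_E$ remain available; this is exactly the setting of \cite{BLGGT14} and \cite{GL14}.

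Let $M$ be the Fontaine-Laffaille module associated to $\rho$, so $\bar M := M/\omega_E M$ is the one associated to $\barrho$. Since $\barrho$ is upper triangular and the equivalence is exact, $\bar M$ is an iterated extension of rank-$1$ Fontaine-Laffaille modules $\bar M_1,\ldots,\bar M_d$ over $k_E$; the rank-$1$ piece $\bar M_i$ corresponds to a character $\barchi_i$, and its multiset of labelled Hodge-Tate weights is determined so that jointly the $\bar M_i$ exhaust $\HT_s(\rho)$ for each $s$. Rank-$1$ objects in the Fontaine-Laffaille category over $\mathcal O_E$ with prescribed labelled Hodge-Tate weights form a torsor under $\mathcal O_E^{\times}$ (via the choice of an unramified Frobenius scalar), so each $\bar M_i$ lifts to a rank-$1$ object $M_i$ over $\mathcal O_E$ with the same labelled Hodge-Tate weights and reducing to $\bar M_i$.

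It remains to assemble the $M_i$ into an upper triangular Fontaine-Laffaille module $M'$ over $\mathcal O_E$ reducing to $\bar M$. This is done inductively: at the $i$-th step one must lift an extension class in $\Ext^1(\bar M_{i+1}, \bar L_i)$, where $\bar L_i$ is the partial extension already constructed, to $\Ext^1(M_{i+1}, L_i)$ for chosen integral lifts. In the Fontaine-Laffaille category, $\Ext^1$ between two finite free objects is computable as the cokernel of an explicit $\mathcal O_E$-linear map between finite free $\mathcal O_E$-modules, so it is finitely generated and the reduction-mod-$\omega_E$ map
\[
\Ext^1(M_{i+1}, L_i)/\omega_E \;\longrightarrow\; \Ext^1(\bar M_{i+1}, \bar L_i)
\]
is surjective. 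Thus the mod-$\omega_E$ extension class lifts, and the induction produces the desired $M'$; applying the equivalence yields an upper triangular crystalline lift $\rho'$ with $\HT_s(\rho') = \HT_s(\rho)$ for all $s$ and $\barrho' \simeq \barrho$.

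The only real work is setting up the appropriate Fontaine-Laffaille equivalence in case (2); once that is in hand, the lifting problem decouples into lifting rank-$1$ objects and successively lifting extension classes, both of which are formal. Unlike Theorem \ref{main}, here we need none of the delicate combinatorics of Kisin modules, nor any of the conditions \textnormal{\textbf{(C-1)}}, \textnormal{\textbf{(C-2A)}}, \textnormal{\textbf{(C-2B)}}: Fontaine-Laffaille theory provides a clean equivalence in the range $[0,p-2]$ (and its unipotent extension to $[0,p-1]$) that bypasses the $\Ghat$-action subtleties which force those technical hypotheses in the main theorem.
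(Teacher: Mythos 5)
Your argument is correct and is in essence the Fontaine–Laffaille strategy carried out in the sources the paper cites; the paper itself simply defers to Lemma 1.4.2 of \cite{BLGGT14} for case (1) and Theorem 3.0.3 of \cite{GL14} for case (2) rather than spelling out the proof. The only points you gloss over — that the two-term complex computing $\Ext^1$ in the Fontaine–Laffaille category base-changes well modulo $\omega_E$ so that the reduction map on $\Ext^1$ is surjective, and in case (2) that the successive extensions constructed remain in the unipotent subcategory so that the equivalence at the boundary weight $p-1$ applies — are exactly the technical verifications carried out in those references.
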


\begin{rem}
It is clear that our Corollary \ref{maincorollary} can completely cover Case (1) in Theorem \ref{BLGGT}, but not Case (2). However, our result proves new cases that are not covered in Theorem \ref{BLGGT}. For example, in Corollary \ref{maincorollary}(2), it is possible we have $p \in \HT(D_{s_0''})$ for some $s_0''$ (even when $K =\Qp$). Also in Corollary \ref{maincorollary}(3), we do not need to assume that $\rho$ is unipotent (although we have restriction on the Hodge-Tate weights). More importantly, our methods can be used to prove similar results when $K$ is ramified, where Fontaine-Laffaille theory is not available, see forthcoming \cite{Gao15ram}.
\end{rem}

\section{Application to weight part of Serre's conjecture}

The local results proved in Section 7 have direct application to the weight part of Serre's conjecture for mod $p$ Galois representations associated to automorphic representations on unitary groups of rank $d$, as outlined in \cite{BLGG14}. We will only introduce necessary notations for our purposes, and the reader should refer to \cite{BLGG14} for any unfamiliar terms and more details. Note that our convention of Hodge-Tate weights are the opposite of \textit{loc. cit.}, and we use $p$ for their $\ell$, so we change the notations accordingly in our paper.

\textbf{(Notation-$F$):} Throughout this section, we suppose $p>2$. Let $F$ be an imaginary CM field, with maximal totally real subfield $F^{+}$, and denote $c \in \Gal(F/F^{+})$ the nontrivial element. Suppose that any place $v$ in $F^{+}$ over $p$ splits completely in $F$, that is, $v=\tilde v \tilde v^c$ in $F$ for a fixed choice of $\tilde v$. For each place $w\mid p$ of $F$, denote the completion as $F_w$, with residue field $k_w$. In the following, the subscript $w \mid p$ means all places in $F$ over $p$.

Let $\mathbb Z_{+}^d:= \{(a_1, \ldots, a_d) \in \mathbb Z^d \mid a_1 \geq \ldots  \geq a_d\}$. Define
$(\mathbb Z_{+}^d)_0^{\coprod_{w \mid p} \Hom(k_w, \Fpbar)}  \subset  (\mathbb Z_{+}^d)^{\coprod_{w \mid p} \Hom(k_w, \Fpbar)}$ a subset, where an element
  $a = \prod_{w \mid p} \prod_{\kappa \in \Hom(k_w, \Fpbar)} (a_{w, \kappa, 1}, \ldots, a_{w, \kappa, d})$
  is in the subset, if
$a_{w, \kappa, i}  +  a_{w, \kappa c, d+1-i} =0, \forall w \mid p, \kappa \in \Hom(k_w, \Fpbar), \textnormal{ and } \forall 1 \leq i \leq d.$

We call $a \in \mathbb Z_{+}^d$ a \emph{Serre weight} if $a_i-a_{i+1} \leq p-1, \forall i$. We call an element in a set of the shape $(\mathbb Z_{+}^d)^N$ a Serre weight, if each $\mathbb Z_{+}^d$-constituent is a Serre weight.

Now for any $K/\Qp$ unramified with residue field $k$, we can and do naturally identify $\Hom(K, \overline K)$ with $\Hom(k, \Fpbar)$ (i.e., identify $\kappa$ with its reduction $\overline \kappa$). Let $a \in (\mathbb Z_{+}^d)^{\Hom(K, \overline K)} =(\mathbb Z_{+}^d)^{\Hom(k, \Fpbar)}$, we call a de Rham representation $\rho: G_K \to \GL_d(\overline \Qp)$ of \emph{Hodge type} $a$, if
$\HT_{\kappa}(\rho) = \{a_{\kappa, 1}+d-1, a_{\kappa, 2}+d-2, \ldots, a_{\kappa, d} \}, \forall \kappa.$
For a residual representation $\barrho: G_K \to \GL_d(\Fpbar)$, let $W^{\textnormal{cris}}(\barrho)$ be the set of Serre weights $a$, such that $\barrho$ has a crystalline lift of Hodge type $a$. Also, let $W^{\textnormal{diag}}(\barrho)$ be the set of Serre weights $a$, such that $\barrho$ has a \emph{potentially diagonalizable} crystalline lift of Hodge type $a$. Here, potential diagonalizability is in the sense of \cite[\S 1.4]{BLGGT14}, and we omit the definition. But we remark that all upper triangular crystalline representations are potentially diagonalizable, which is an easy conclusion from the definition.
Now, we state our main result (see \cite{BLGG14} for any unfamiliar terms).

\begin{thm} \label{application}
With notations in the paragraph \textbf{(Notation-$F$)}. Suppose furthermore that $p$ is unramified in $F$.
Suppose $\overline r: G_F \to \GL_d(\Fpbar)$ is an irreducible representation with split ramification.
Assume that there is RACSDC automorphic representation $\Pi$ of $\GL_d(\mathbb{A}_F)$ of weight $\mu \in (\mathbb Z_{+}^{d})^{\Hom (F, \mathbb C)}$ and level prime to $p$ such that:
\begin{itemize}
  \item $\overline r \simeq \overline r_{p, \iota}(\Pi)$ (that is, $\overline r$ is automorphic).
  \item For each $\tau \in \Hom(F, \mathbb C)$, $\mu_{\tau, 1}-\mu_{\tau, d} \leq p-d$.
  \item $\overline r(G_{F(\zeta_p)})$ is adequate.
\end{itemize}

Suppose furthermore that for \emph{each} $w|p$, $\overline r \mid_{G_{F_w}} \in \mathcal E_{G_{F_w}}(\barchi_{w, 1}, \ldots, \barchi_{w, d})$ is upper triangular.
Now let
$a=(a_{w})_{w \mid p} \in (\mathbb Z_{+}^d)_0^{\coprod_{w \mid p}\Hom(k_w, \Fpbar)}$
be a Serre weight, such that
\begin{itemize}
  \item $a_{w, \kappa, 1}- a_{w, \kappa, d} \leq p-d+1, \forall w, \kappa$, and
  \item $a_w \in W^{\cris}(\overline r \mid_{G_{F_w}}), \forall w\mid p.$
\end{itemize}
And for \emph{each} $w\mid p$, \emph{any one} of the following listed 4 conditions is satisfied. Before we proceed to list the conditions, we make the following conventions on notations. Since we are fixing one $w$ each time, so for the brevity of notations, we can omit $w$ from all the subscripts. So we let $[k_w: \Fp]=f_w =f$, and write $a_w = a = \Pi_{s=0}^{f-1}(a_{s, 1}, \ldots, a_{s, d})$. Also simply write $\overline r \mid_{G_{F_w}} \in \mathcal E(\barchi_1, \ldots, \barchi_d)$.

\begin{enumerate}
\item $f=1$, i.e., $F_w =\Qp$. $a_{0, i}+(d-i)-a_{0, j}-(d-j) \neq p-1, \forall i<j$. And $\barchi_i^{-1} \barchi_j \neq \mathbbm{1}, \overline{\varepsilon}_p, \forall i<j$.
  \item $a_{s, i} \neq a_{s, j}, \forall s, \forall i\neq j$. For one $s_0$, $a_{s_0, i}+(d-i)-a_{s_0, j}-(d-j) \neq p-1, \forall i<j$. And $\barchi_i^{-1} \barchi_j \neq \mathbbm{1}, \overline{\varepsilon}_p, \forall i<j$.

  \item $a_{s, i} \neq a_{s, j}, \forall s, \forall i\neq j$. For one $s_0$, $a_{s_0, i}+(d-i)-a_{s_0, j}-(d-j) \neq p-1, \forall i<j$. And for one $s_0'$, $a_{s_0', 1}+(d-1)-a_{s_0', d} \neq p$ (it is possible that $s_0 =s_0'$).

  \item $a_{s, 1}+d-1-a_{s, d} \leq p-1, \forall s$. And for one $s_0$, $a_{s_0, 1}+d-1-a_{s_0, d} \leq p-2$.
\end{enumerate}

Then, $\overline r$ is automorphic of weight $a$. \footnote{In order to save space, we did not recall what it means for $\overline r$ to be automorphic of some Serre weight $a$. Roughly speaking, it means that there exists a degree $0$ cohomology class on some unitary group with coefficients in a certain local system corresponding to $a$, whose Hecke eigenvalues are determined by the characteristic polynomials of $\overline r$ at Frobenius elements. See \cite[Def. 2.1.9]{BLGG14}.}

\end{thm}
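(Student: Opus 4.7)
The plan is to reduce the theorem to the main local result (Corollary \ref{maincorollary}) combined with the automorphy lifting machinery of \cite{BLGGT14} packaged in \cite{BLGG14}. The key intermediate goal is to upgrade the hypothesis $a_w \in W^{\cris}(\overline r\mid_{G_{F_w}})$ to $a_w \in W^{\diag}(\overline r\mid_{G_{F_w}})$ for every $w\mid p$. Once this is in hand, a standard potentially-diagonalizable lifting/gluing argument (as carried out, for instance, in the proof of Theorem 4.1.6 of \cite{BLGG14}) produces a global potentially diagonalizable crystalline lift $r'$ of $\overline r$ of weight $a$, and then the ``potentially diagonalizable'' automorphy lifting theorem of \cite{BLGGT14} (which consumes the adequacy of $\overline r(G_{F(\zeta_p)})$, split ramification, and the Fontaine--Laffaille automorphy of $\overline r$ via $\Pi$) shows that $r'$ is automorphic, hence $\overline r$ is automorphic of weight $a$.

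The heart of the argument is therefore the local step at each $w\mid p$. Fix such a $w$ and drop it from the notation. By hypothesis there exists a crystalline lift $\rho$ of $\overline r\mid_{G_{F_w}}$ of Hodge type $a_w$. Twisting by a suitable crystalline character arranges $\min_\kappa \HT_\kappa(\rho)=0$ for every $\kappa$, after which the labelled Hodge--Tate weights at each $\kappa$ are $d$ distinct integers in $[0,\,a_{\kappa,1}+d-1-a_{\kappa,d}]\subseteq [0,p]$ (using $a_{\kappa,1}-a_{\kappa,d}\le p-d+1$). The reduction is still $\overline r\mid_{G_{F_w}}$, which is upper triangular by assumption, so we are exactly in the setting of \textbf{(CRYS)}.

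Next I would check that each of the four numerical conditions in the statement translates via the identification $\HT_\kappa(\rho)=\{a_{\kappa,i}+(d-i):1\le i\le d\}$ into the four conditions of Corollary \ref{maincorollary}: the differences $(a_{\kappa,i}+d-i)-(a_{\kappa,j}+d-j)$ for $i<j$ are precisely the pairwise differences of HT weights, and the hypothesis $a_{s,i}\ne a_{s,j}$ for $i\ne j$ is equivalent to ``differences never $1$''; moreover ``differences never $p-1$'' together with $0\in\HT(D_{s_0})$ forces $p-1\notin\HT(D_{s_0})$, while $a_{s_0',1}+d-1-a_{s_0',d}\ne p$ combined with $0\in\HT(D_{s_0'})$ forces $p\notin\HT(D_{s_0'})$. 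Thus Corollary \ref{maincorollary} applies and yields an upper triangular crystalline lift of $\overline r\mid_{G_{F_w}}$ with the same Hodge--Tate weights; untwisting produces an upper triangular crystalline lift of Hodge type $a_w$. Since upper triangular crystalline representations are potentially diagonalizable (as noted after the definition of $W^{\diag}$), we conclude $a_w\in W^{\diag}(\overline r\mid_{G_{F_w}})$, which completes the local input.

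The main obstacle is not any new substantive mathematics but rather the bookkeeping needed to invoke the global automorphy lifting machinery cleanly: one must verify that the polarization condition built into $(\mathbb Z_{+}^d)_0^{\coprod\Hom(k_w,\Fpbar)}$ is compatible with the twist used to normalize minimal Hodge--Tate weight to $0$ (equivalently, that the same twist applied at $\tilde v$ and $\tilde v^c$ respects conjugate self-duality), that the Fontaine--Laffaille hypothesis on $\mu$ yields a potentially diagonalizable automorphic starting lift (so that the ``change of weight'' argument of \cite{BLGG14} is available), and that the adequacy, split ramification and level-prime-to-$p$ hypotheses feed exactly into the automorphy lifting theorem of \cite{BLGGT14}. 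All of these are standard, and the genuinely new content is entirely contained in the local upper triangular crystalline lifting step supplied by Corollary \ref{maincorollary}.
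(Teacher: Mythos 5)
Your overall strategy matches the paper's: both proofs reduce the global statement to showing $a_w \in W^{\textnormal{diag}}(\overline r\mid_{G_{F_w}})$ at each $w\mid p$, via the local crystalline lifting result (Corollary \ref{maincorollary}), and then hand off to the automorphy-lifting machinery of \cite{BLGG14}/\cite{BLGGT14}. Your translation of the four numerical conditions into the four conditions of Corollary \ref{maincorollary} is correct, as is the observation that upper triangular crystalline representations are potentially diagonalizable.

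However, there is a genuine gap at the final step. You write that producing an automorphic crystalline lift $r'$ of Hodge type $a$ ``hence'' gives that $\overline r$ is automorphic of weight $a$. This does not follow immediately: ``automorphic of weight $a$'' is a cohomological statement about the local system $F_a$ (Definition 2.1.9 of \cite{BLGG14}), whereas the existence of an automorphic lift of Hodge type $a$ only places $\overline r$ in cohomology with coefficients in $P_a$ (the dual Weyl module). The packaged result the paper actually invokes (Corollary 4.1.10 of \cite{BLGG14}) concludes only that $\overline r$ is automorphic of \emph{some} weight $b$ with $F_b$ a Jordan--H\"older factor of $P_a$. To upgrade this to weight $a$ itself, the paper uses that the hypothesis $a_{w,\kappa,1}-a_{w,\kappa,d}\le p-d+1$ forces $a$ to lie in the closure of the lowest alcove, so $P_a=F_a$ is already irreducible (cf.\ Proposition 3.18 of \cite{Her09}). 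You should add this observation; without it the argument only yields automorphy of some $b$ related to $a$, not of $a$ itself.
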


\begin{proof}
For each $w \mid p$, the listed 4 conditions on $a_w$ are precisely translated from those of Corollary \ref{maincorollary}. Since $a_w \in W^{\textnormal{cris}}(\overline r \mid_{G_{F_w}})$, so if any one of the listed 4 conditions is satisfied, then by Corollary \ref{maincorollary}, $\overline r \mid_{G_{F_w}}$ has a upper triangular crystalline lift with Hodge type $a_w$. That is to say, $a_w \in W^{\textnormal{diag}}(\overline r \mid_{G_{F_w}})$. And then we can apply \cite[Cor. 4.1.10]{BLGG14} to conclude. Note that in our case, $P_a=F_a$ (in the notation of \textit{loc. cit.}) is irreducible because our Serre weight lies in the closure of the lowest alcove, see e.g., \cite[Prop. 3.18]{Her09} for the $d=3$ case.
\end{proof}


\bibliographystyle{alpha}

\end{document}